\numberwithin{equation}{section}
\newtheorem{theorem}{Theorem}[section]
\newtheorem{proposition}[theorem]{Proposition}
\newtheorem{definition}[theorem]{Definition}
\newtheorem{lemma}[theorem]{Lemma}
\newtheorem*{theorem*}{Theorem}
\newtheorem*{lemma*}{Lemma}
\newtheorem{corollary}[theorem]{Corollary}
\newtheorem{remark}[theorem]{Remark}
\newcommand\ttheta{\tilde{\theta}}
\newcommand\tr{\tilde{r}}
\newcommand\talpha{\tilde{\alpha}}
\newcommand\tsigma{\tilde{\sigma}}
\newcommand{\R}{\mathbb{R}}
\newcommand{\Z}{{\mathbb{Z}}}
\newcommand{\LL}{{\mathcal{L}}}
\newcommand{\CC}{{\mathcal{C}}}
\begin{document}
\title[Strichartz estimates in a conical singular space]
{Strichartz estimates for the Klein-Gordon equation in a conical singular space}

\author{J. Ben-Artzi}
\address{School of Mathematics,
Cardiff University, Cardiff CF24 4AG, United Kingdom}
\email{Ben-ArtziJ@cardiff.ac.uk}

\author{F. Cacciafesta}
\address{Dipartimento di Matematica, Universit\'a degli studi di Padova, Via Trieste, 63, 35131 Padova PD, Italy}
\email{cacciafe@math.unipd.it}

\author{A. S. de Suzzoni} 
\address{CMLS, \'Ecole Polytechnique, CNRS, Universit\'e Paris- Saclay, 91128 PALAISEAU Cedex, France}
\email{ anne-sophie.de-suzzoni@polytechnique.edu}

\author{J. Zhang}
\address{Department of Mathematics, Beijing Institute of Technology, Beijing
100081, China; School of Mathematics,
Cardiff University, Cardiff CF24 4AG, United Kingdom}
\email{zhang\_junyong@bit.edu.cn; ZhangJ107@cardiff.ac.uk}

\thanks{JBA  acknowledges support from an Engineering and Physical Sciences Research Council Fellowship (EP/N020154/1). FC acknowledges support from the University of Padova STARS project ``Linear and Nonlinear Problems for the Dirac Equation" (LANPDE). ASdS acknowledges support from ESSED ANR-18-CE40-0028. JZ acknowledges support from National Natural
Science Foundation of China (11771041, 11831004) and a Marie Sk\l odowska-Curie Fellowship (790623).}
\keywords{Conical singular space, Klein-Gordon equation, Strichartz estimates, Local smoothing estimates}
\subjclass[2010]{42B37, 35Q41, 35A27}

\maketitle

\begin{abstract} Consider a conical singular space $X=C(Y)=(0,\infty)_r\times Y$ with the metric $g=\mathrm{d}r^2+r^2h$, where the cross section $Y$ is
a compact $(n-1)$-dimensional closed Riemannian manifold $(Y,h)$. We study the Klein-Gordon equations with inverse-square potentials in the space $X$, proving in particular global-in-time Strichartz estimates in this setting. 
\end{abstract}

\setcounter{tocdepth}{1} 
\tableofcontents
 
\section{Introduction and main result}

Following the results obtained in \cite{ZZ1, ZZ2}, in this paper we continue the study of dispersive flows on conical singular spaces, focusing on the study of Strichartz estimates for the Klein-Gordon equation.


\subsection{The setting and motivations} The setting here is the same as \cite{ZZ1,ZZ2} in which the last author and Zheng proved the Strichartz estimates for wave and Schr\"odinger equations: let us briefly introduce it. 

Let $(Y,h)$ be a compact $(n-1)$-dimensional Riemannian manifold, consider the metric cone $X=C(Y)=(0,\infty)_r\times Y$ with $g=dr^2+r^2h$, and let $\Delta_g$ be the Friedrichs extension of the Laplace-Beltrami operator. The metric cone $X$ has a simple geometric singularity, and its metric is incomplete at $r=0$; notice that if $Y$ is the standard sphere of radius one, then $C(Y)=\mathbb{R}^{n}\setminus\{0\}$.  The spectral theory of the operator $\Delta_g$ was studied in \cite{C1,C2}. The heat kernel associated to the operator $\Delta_g$ was studied in \cite{Mo,L2}, and the Riesz transform kernel was investigated in \cite{HL,L1}.  
In the setting of exact cones $X$, in \cite{CT,CT1} the authors studied the wave diffraction, and in \cite{MW} , the propagation of singularities theory for wave equation on general setting of conic manifolds is discussed.

The Strichartz estimates for dispersive equations on conic manifolds have attracted quite some interest in recent years. The ones for the Schr\"odinger equation were proved, in the case of a flat cone, in \cite{Ford}, on
polygonal domains in \cite{BFHM}, on exterior polygonal domains in \cite{BMW}, and on the metric cone
in \cite{ZZ1}. Concerning the wave equation on cones, in \cite{BFM} the authors established the Strichartz inequalities on a flat cone of dimension two, that is, $Y=\mathbb{S}^1_\rho$. These results on the Strichartz estimates for wave and Schr\"odinger equations have been extended to general cones in \cite{ZZ1,ZZ2}. 
 \vspace{0.2cm}

In this paper, we consider the Strichartz estimates for the Klein-Gordon flow in this conical singular space. More precisely, we are interested in the study of the following Klein-Gordon equation in this framework:
\begin{equation}\label{equ:KG}
\begin{cases}
\partial_{t}^2u+\LL_V u+m^2u=F(t,z), \quad (t,z)\in I\times X, \\ u(0)=u_0(z),
~\partial_tu(0)=u_1(z),
\end{cases}
\end{equation}
where the Schr\"odinger operator 
\begin{equation}\label{LV}
\mathcal{L}_V=\Delta_g+V
\end{equation}  and $V=V_0(y) r^{-2}$ is a Hardy-type potential with $V_0(y)$ being a smooth function on the section $Y$. The inverse-square type potential is homogeneous of degree $-2$ and it is known to be at the threshold of decay in order to guarantee validity of a Strichartz estimate (see \cite{GVV}).

\begin{remark}
Beside its own independent interest, we mention the fact that the original motivation that led us to study the dynamics of the Klein-Gordon equation is the study of the Dirac equation in this conical setting. The Dirac operator $\mathcal{D}$ on $X$ was studied in \cite{Chou},
mostly from the point of view of spectral analysis. In a forthcoming work, we  intend to study the time-dependent Dirac flow by relying on the classical ``squaring trick", in order to reduce the study of the dynamics to the one of a (system-of) Klein-Gordon equations with an inverse-square potential perturbation in the same conical singular space. Nevertheless, we need to stress the fact that the geometry does not allow a straightforward and harmless application of this strategy, as indeed in order to properly define the Dirac operator on curved spaces the spin connection is needed, and therefore the corresponding Laplace operator yielded by the squaring is not the  ``scalar" one, but it is the ``spinorial" one, and this fact of course requires additional care.
\end{remark}
 \vspace{0.2cm}

 \subsection{Strichartz estimates for the Klein-Gordon equation} 
 In the flat Euclidean space, the free Klein-Gordon equation reads
 \begin{equation}\label{equ:E-KG}
\begin{cases}
\partial_{t}^2u-\Delta u+m^2u=0, \quad (t,z)\in I\times\R^n; \\ u(0)=u_0(z),
~\partial_tu(0)=u_1(z).
\end{cases}
\end{equation}
The homogenous Strichartz estimates are given by
\begin{equation*}
\begin{split}
&\|u(t,z)\|_{L^q_t(I;L^r_z(\R^n))}\lesssim \|u_0\|_{ H^s(\R^n)}+\|u_1\|_{
H^{s-1}(\R^n)},
\end{split}
\end{equation*}
where $H^s(\R^n)$ is the usual Sobolev space, and the pairs $(q,r)\in \Lambda_{s,\theta}$ with $0\leq\theta\leq1$ (the set $\Lambda_{s,\theta}$ is given in Definition \ref{ad-pair} below).
We refer the reader to the fundamental papers \cite{B, GV, KT} for these Strichartz estimates. 
Notice that in the particular case $\theta=0$, these estimates correspond to the ones for the wave equation, provided the inhomogeneous Sobolev space norms are replaced by the homogeneous ones. 
 \vspace{0.2cm}
 
In the present paper, we mainly consider the Klein-Gordon equation \eqref{equ:KG} associated with the operator $\LL_V$ in \eqref{LV}, where
$V(r,y)=r^{-2}V_0(y)$ and $V_0(y)\in\CC^\infty(Y)$ such that $\Delta_h+V_0(y)+(n-2)^2/4$ is a strictly positive operator on $L^2(Y)$. \vspace{0.2cm}

 Before stating our main result, let us introduce some notations. 

\begin{definition}\label{ad-pair}
For $0\leq\theta\leq1$  we say that a couple $(q,r)\in [2,\infty]\times [2,\infty)$ is admissible,   if $(q,r)$ satisfies
\begin{equation}\label{adm}
\frac{2}q+\frac{n-1+\theta}r\leq\frac{n-1+\theta}2,\quad (q,r, n,\theta)\neq(2,\infty,3,0).
\end{equation}
For $s\in\R$, we denote $(q,r)\in \Lambda_{s,\theta}$ if $(q,r)$ is admissible and satisfies 
\begin{equation}\label{scaling}
\frac1q+\frac {n+\theta}r=\frac {n+\theta}2-s.
\end{equation}
Let $\nu_0>0$, we define
\begin{equation}\label{Ls}
\Lambda_{s,\theta,\nu_0}=\big\{(q,r)\in\Lambda_{s,\theta}: 1/r>1/2-(1+\nu_0)/n \big\}.
\end{equation}

\end{definition}
 
Throughout this paper, pairs of conjugate indices will be written as $r, r'$, meaning that $\frac{1}r+\frac1{r'}=1$ with $1\leq r\leq\infty$. \vspace{0.2cm}

Our main result is then the following.

\begin{theorem}\label{thm:Strichartz} Assume that $(X, g)$ is a metric cone of dimension
$n\geq3$. Let $\LL_V=\Delta_g+V$ where $r^2V=:V_0(y)\in\CC^\infty(Y)$ such that $\Delta_h+V_0(y)+(n-2)^2/4$ is a strictly positive operator on $L^2(Y)$ and its smallest
eigenvalue is $\nu_0^2$ with $\nu_0>0$. Suppose
that $u$ is the solution of the Cauchy problem \eqref{equ:KG} with $m=1$ and initial data $u_0\in  H^{s}(X), u_1\in  H^{s-1}(X)$ for $s\geq0$ where $ H^s(X)={(1+\LL_V)}^{-\frac{s}2}L^2(X)$. For $0\leq \theta,\ttheta \leq 1$,  let the sets $\Lambda_{s, \theta}$ and $\Lambda_{s, \theta, \nu_0}$ be given by Definition \ref{ad-pair}. Then \vspace{0.2cm}

$\mathrm{(i)}$ if $V\equiv0$,  the Strichartz estimates
\begin{equation}\label{stri}
\begin{split}
&\|u(t,z)\|_{L^q_t(\R;L^r_z(X))}\lesssim \|u_0\|_{ H^s(X)}+\|u_1\|_{
H^{s-1}(X)}+\|F\|_{L^{\tilde{q}'}(\R;L^{\tilde{r}'}(X))}
\end{split}
\end{equation} 
hold for all $(q,r)\in \Lambda_{s,\theta}$, $(\tilde q, \tilde r)\in \Lambda_{1-s,\ttheta}$; \vspace{0.2cm}

$\mathrm{(ii)}$  if $V\not\equiv0$ and $q>2$, the Strichartz estimates 
\begin{equation}\label{V-stri}
\begin{split}
&\|u(t,z)\|_{L^q_t(\R;L^r_z(X))}\lesssim \|u_0\|_{ H^s(X)}+\|u_1\|_{
H^{s-1}(X)}+\|F\|_{L^{\tilde{q}'}(\R;L^{\tilde{r}'}(X))}
\end{split}
\end{equation}  hold for all $(q,r)\in \Lambda_{s,\theta,\nu_0}$, $(\tilde q, \tilde r)\in \Lambda_{1-s,\ttheta,\nu_0}$. Moreover, if $q=2$ and $F=0$, 
the Strichartz estimates 
\begin{equation}\label{V-stri'}
\begin{split}
\|(1+\Delta_g)^{-\frac{\alpha+\alpha_0+\sigma(r)}2}&u(t,z)\|_{L^2_t(\R;L^r_z(X))}\\&\lesssim \|u_0\|_{ H^{\frac{n+1+\theta}{2(n-1+\theta)}-\alpha-\alpha_0}(X)}+\|u_1\|_{
H^{\frac{n+1+\theta}{2(n-1+\theta)}-\alpha-\alpha_0-1}(X)}
\end{split}
\end{equation}  hold for all $(2,r)\in \Lambda_{s,\theta,\nu_0}$ where 
$$
\alpha=\alpha(\theta)=\frac1{n-1+\theta},\quad \alpha_0=\alpha(0),\quad\sigma(r)=n\left(\frac{n-3+\theta}{2(n-1+\theta)}-\frac1r\right).
$$

\end{theorem}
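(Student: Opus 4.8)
My plan is to follow the now-standard machinery for Strichartz estimates on metric cones, adapting it from the wave and Schrödinger cases treated in \cite{ZZ1, ZZ2} to the Klein-Gordon flow. The starting point is that the Klein-Gordon propagator $e^{it\sqrt{1+\LL_V}}$ differs from the wave propagator essentially by the symbol $\sqrt{1+\lambda^2}$ in place of $\lambda$, so the key is to transfer dispersive information encoded in the spectral measure $dE_{\sqrt{\LL_V}}(\lambda)$ — which one may assume is available from the earlier papers — through a functional-calculus argument. Concretely, I would first perform a Littlewood-Paley decomposition adapted to $\LL_V$ (or to $\Delta_g$ when $V\equiv 0$), writing $u=\sum_{j}\varphi_j(\sqrt{1+\LL_V})u$, and on each dyadic piece convert the Klein-Gordon half-wave operator into a perturbation of the wave half-wave operator. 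For low frequencies $\lambda\lesssim 1$ the extra mass term makes the symbol elliptic and one expects better (indeed Schrödinger-type) dispersion with the gain of $\theta$ in the admissibility range \eqref{adm}; for high frequencies $\lambda\gtrsim 1$ the mass is a lower-order perturbation and one recovers the wave-type estimates. This dichotomy is exactly what the parameter $0\le\theta\le1$ in Definition \ref{ad-pair} encodes, interpolating between the wave endpoint $\theta=0$ and the Schrödinger-type endpoint $\theta=1$.

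The technical heart is the same as in \cite{ZZ1,ZZ2}: one needs (a) a micro-localized dispersive estimate for the half-Klein-Gordon propagator localized to a single dyadic frequency and to the relevant angular/radial regions of the cone, and (b) the Stein-Tomas / Keel-Tao abstract $TT^*$ argument to upgrade these into the space-time bounds. The dispersive estimate itself is obtained by splitting the spectral measure of $\sqrt{\LL_V}$ into a part supported near the "light cone" distance $|d(z,z')-t|\lesssim \lambda^{-1}$, handled by stationary phase once the oscillatory kernel of $dE_{\sqrt{\LL_V}}(\lambda)$ on the cone is inserted, and a non-resonant part handled by non-stationary phase and the decay of the Hankel-type transforms built from the eigenfunctions of $\Delta_h+V_0+(n-2)^2/4$; here the hypothesis $\nu_0>0$ guarantees the Bessel orders $\nu=\sqrt{\mu_k+(n-2)^2/4}$ stay bounded below, which is what forces the restricted range $\Lambda_{s,\theta,\nu_0}$ in part (ii) — when $V\ne0$ one loses a small amount of radial integrability near the tip $r=0$. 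For the inhomogeneous estimate one uses the Christ-Kiselev lemma together with the Minkowski/duality argument on the Duhamel term, which is why $q>2$ is required for \eqref{V-stri} while the endpoint $q=2$ must be treated separately.

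For the $q=2$ endpoint estimate \eqref{V-stri'}, the point is that the Keel-Tao endpoint is genuinely unavailable in the presence of the inverse-square potential (the sharp $L^2_tL^r_x$ bound fails without a loss), so one settles for a weaker statement with an explicit derivative loss $\sigma(r)$ measured in the \emph{unperturbed} operator $(1+\Delta_g)$. The plan here is to prove a local smoothing / weighted $L^2$ estimate: interpolate between the trivial energy bound and a Morawetz-type or resolvent-based local smoothing estimate for $\LL_V$ (of the kind underlying the earlier cone results), then convert the weight into a fixed power of $(1+\Delta_g)$ using the equivalence of Sobolev spaces defined by $\LL_V$ and by $\Delta_g$ away from the bottom of the spectrum — this equivalence, and the precise amount by which it fails near $r=0$, is exactly what produces the exponents $\alpha(\theta)$, $\alpha_0$, and $\sigma(r)$ in the statement. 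I expect the main obstacle to be item (a): establishing the micro-localized dispersive decay for the Klein-Gordon propagator uniformly down to the conical tip, because the oscillatory kernel of the spectral measure on $C(Y)$ has the diffractive structure from \cite{CT,CT1,MW} and one must check that inserting the Klein-Gordon symbol $\sqrt{1+\lambda^2}$ rather than $\lambda$ does not destroy the stationary-phase analysis — in particular that the Hessian of the phase remains non-degenerate in the transition regime $\lambda\sim 1$, which is precisely where the mass term is neither negligible nor dominant.
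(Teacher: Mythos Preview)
Your outline for part (i) ($V\equiv 0$) is essentially the paper's approach: microlocalized dispersive estimates for the free Klein-Gordon propagator built from the spectral measure of $\sqrt{\Delta_g}$ (splitting into low/high frequencies to capture the Schr\"odinger/wave dichotomy), followed by the abstract Keel--Tao machinery and Christ--Kiselev for the inhomogeneous term.

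For part (ii), however, there is a genuine gap in your plan. You propose to extract dispersive information directly from $dE_{\sqrt{\LL_V}}(\lambda)$, but the microlocalized spectral measure results of \cite{ZZ1,ZZ2} are established only for the \emph{unperturbed} operator $\Delta_g$; no comparable pointwise description of the spectral measure of $\LL_V$ is available, and the inverse-square potential is scaling-critical, so one cannot recover it by perturbation of the dispersive estimate itself. The paper instead treats $V$ perturbatively at the level of the \emph{equation}: one writes the Duhamel formula with respect to the free flow,
\[
u(t)=\cos\!\big(t\sqrt{1+\Delta_g}\big)u_0+\frac{\sin\!\big(t\sqrt{1+\Delta_g}\big)}{\sqrt{1+\Delta_g}}u_1+\int_0^t \frac{\sin\!\big((t-\tau)\sqrt{1+\Delta_g}\big)}{\sqrt{1+\Delta_g}}\,V u(\tau)\,d\tau,
\]
applies part (i) to the first two terms, and controls the Duhamel term by combining the free inhomogeneous Strichartz estimate with a \emph{local smoothing} estimate for $\LL_V$ (proved via the Hankel transform/separation of variables), namely $\|r^{-\beta}u\|_{L^2_tL^2_z}\lesssim \|u_0\|_{H^{\beta-1/2}}+\|u_1\|_{H^{\beta-3/2}}$ for $\tfrac12<\beta<1+\nu_0$. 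The restriction to $\Lambda_{s,\theta,\nu_0}$ arises from this range on $\beta$ together with the $L^r$-boundedness of the intertwining operator $\mathcal{H}_\mu\mathcal{H}_\nu$ on the finitely many low angular modes (the trick of \cite{PST}), not from a direct loss in a dispersive bound for $\LL_V$.

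For the endpoint \eqref{V-stri'} you are also missing a key ingredient. The Christ--Kiselev lemma fails when $q=\tilde q=2$, so one needs a genuine \emph{double-endpoint inhomogeneous} Strichartz estimate for the free operator. The paper obtains this by refining the microlocalization of the spectral measure into the three classes $J_{near}$, $J_{non\text{-}out}$, $J_{non\text{-}inc}$: for the latter two the phase in the oscillatory kernel has a definite sign, so the retarded/advanced bilinear forms can be estimated separately in the Keel--Tao endpoint argument and then recombined. This step is then fed into the same Duhamel/local-smoothing scheme as above (with $\beta=1$ at low frequency and $\beta=\tfrac{n-2}{n-1}$ at high frequency), which is what produces the specific losses $\alpha,\alpha_0,\sigma(r)$ measured in $(1+\Delta_g)$. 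Your proposed route via interpolation of a Morawetz-type bound with the energy estimate would not by itself yield the $L^2_tL^r_z$ control, because the conversion from a spatial weight $r^{-\beta}$ to a Sobolev loss in $(1+\Delta_g)$ does not close without the double-endpoint inhomogeneous estimate on the free side.
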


\begin{remark} This  is a generalization of Strichartz estimates for wave and Schr\"odinger equations proved in \cite{ZZ1,ZZ2}. The admissible pairs in $\Lambda_{s,\theta}$ match the wave ones when $\theta=0$ and the Schr\"odinger ones when $\theta=1$.
\end{remark}

\begin{remark} 
The influence of $\nu_0$ on the range of the pair $(q,r)$ is indeed nontrivial, as we will see in Proposition \ref{count}.
 \end{remark}
 
 \begin{remark} In particular, in the endpoint case $q=2$,  if one replaces the inhomogeneous Sobolev norm with the homogeneous one,  then  \eqref{V-stri'}  matches the Strichartz estimate for wave equation in \cite{BPSS}. 
Nevertheless, the unpleasant issues in \eqref{V-stri'} are due to commuting the free Laplacian $\Delta_g$ with its counterpart $\LL_V$. Even in  Euclidean space,  Strichartz estimates for Klein-Gordon \cite{LSS} with an inverse-square potential are missing at $q=2$.
 \end{remark}
 
Let us briefly comment on the strategy of the proofs. Due to the geometry, there are two obstacles that might prevent us from obtaining  dispersive estimates. The first one arises from the possible presence of conjugate points (we say $z,z'\in X$ are conjugate points, if geodesic flows can have 1-parameter families of geodesics with the same endpoints $z,z'$)  and the second  from the inverse-square potential.  Indeed, as the assumptions on the metrics $h$ on $Y$ are very general, conjugate points might appear, so the exponential map fails to be a global diffeomorphism which gives us information on geodesics. The degeneracy of the exponential map will slow down the dispersive decay estimate of the propagator, 
as illustrated in \cite{HW, HZ}. The inverse-square potential shows the same homogenous degree as the Laplacian (or, in other words, is ``scaling critical").
Therefore, it is not possible to rely on standard perturbative arguments to obtain a dispersive estimate for the half wave operator with norm $O(|t-\tau|^{-\frac{n-1}2})$ as $|t-\tau|\to \infty$ due to the influence of the negative inverse-square potential. As  is well known indeed, the analysis of dispersive estimates for scaling-critical potential perturbations of dispersive flows is quite delicate; for some details on this problem in Euclidean space, we refer to \cite{BPSS, BPST} for wave and Schr\"odinger equations and to \cite{LSS} for the Klein-Gordon equation. Fortunately, in \cite{ZZ1,ZZ2}, the authors have overcome these two issues and proved Strichartz estimates for Schr\"odinger and wave equation in an analogous setting. 
There are two key points needed to overcome the obstacles, which have been established in these papers. One consists in micro-localizing the propagator in order to separate  conjugate points: this can be achieved by studying   the micro-localized spectral measure associated to the operator $\Delta_g$ (without potentials). The other one consists in establishing a global-in-time local smoothing estimate, which is proved via a variable-separating argument: this tool allows us to deal with the perturbation provided by the inverse-square potential.\\

Our strategy is then strongly inspired by \cite{HZ, ZZ1,ZZ2, ZZ3}, but some new ideas are needed in order to deal with the difficulties arising from the Klein-Gordon equation.  
Notice indeed that the Klein-Gordon multiplier $e^{it\sqrt{1+\lambda^2}}$ behaves like the wave one for high frequencies and like the Schr\"odinger one for low frequencies, and thus we have to establish a dispersive estimate with norm $O((1+|t|)^{-n/2})$ at low frequencies and $O(2^{j(n+1+\theta)/2}(2^{-j}+|t|)^{-(n-1+\theta)/2})$ at frequencies $2^j, (j\geq0)$.
Therefore we   need to combine the strategies of \cite{HZ, ZZ1,ZZ2, ZZ3} with Keel-Tao's \cite{KT} abstract method to prove global-in-time Strichartz estimates for the ``free" Klein-Gordon equation, i.e. when the potential term vanishes, and then combine it with a local smoothing estimate for Klein-Gordon (for low and high frequencies respectively) in order to deal with the potential term. We also stress the fact that the proof of Strichartz estimates at the endpoint $q=2$ requires additional technical care, due to the lack of the Christ-Kiselev Lemma and of the usual dispersive estimates, and the issues driven by the necessity of commuting the free Laplacian $\Delta_g$ with $\LL_V$.\\

Furthermore, we point out that compared with the asymptotically conic manifold considered in \cite{HZ, ZZ3}, the metric cone here is scaling invariant (and thus automatically non-trapping) but singular at the cone tip. 
Finally we stress the fact that since our setting does not preclude the existence of conjugate points, we cannot obtain the expression of the propagator as in \cite{BMW, Ford}.\\

This paper is organized as follows. In Section \ref{sec:2} we review and discuss the properties of the micro-localized spectral measure associated to the operator $\Delta_g$.
In Section \ref{sec:3}, we prove the dispersive estimates and the $L^2$-estimates for the  Klein-Gordon propagator associated with $\Delta_g$. In Sections \ref{sec:4} and \ref{sec:d-e-in}  we prove homogeneous and inhomogeneous Strichartz estimates for the Klein-Gordon equation when the potential term vanishes. In Section \ref{sec:6} we provide a local smoothing estimate for the Klein-Gordon equation, and in Section \ref{sec:7} we eventually prove Theorem \ref{thm:Strichartz}.

\section{The spectral measure and Littlewood-Paley estimates}\label{sec:2}

In this section, we recall the spectral measure of the operator $\Delta_g$ and the Littlewood-Paley square function estimates proved by the last author and Zheng in \cite{ZZ1}.
The properties of the micro-localized spectral measure capture the decay and the oscillatory behavior. These are key tools for showing the micro-localized dispersive estimates.

\begin{proposition}[Proposition 3.1 \cite{ZZ1}]
\label{prop:localized spectral measure} Let $(X,g)$ be metric cone manifold with distance $d(\cdot, \cdot)$ and let $\Delta_g$ be the positive Laplace-Beltrami operator on $X$. Then there exists a $\lambda$-dependent  operator partition of unity on
$L^2(X)$
$$
\mathrm{Id}=\sum_{k=0}^{N}Q_k(\lambda),
$$
with $N$ independent of $\lambda$,
such that for each $1 \leq k \leq N$, we can write \\$(Q_k(\lambda)dE_{\sqrt{\Delta_g}}(\lambda)Q_k^*(\lambda))(z,z')$ as
\begin{equation}\label{beanQ}\begin{gathered}
\lambda^{n-1} \Big(  \sum_{\pm} e^{\pm
i\lambda d(z,z')}a_\pm(\lambda,z,z') +  b(\lambda, z, z') \Big),
\end{gathered}\end{equation}
and for either $k=0$ or $k'=0$ with $0\leq k,k'\leq N$
\begin{equation}\label{beanQ'}\begin{gathered}
(Q_k(\lambda)dE_{\sqrt{\Delta_g}}(\lambda)Q_{k'}^*(\lambda))(z,z')=\lambda^{n-1} c(\lambda, z, z'),
\end{gathered}\end{equation}
with estimates
\begin{equation}\label{bean}\begin{gathered}
\big|\partial_\lambda^\alpha a_\pm(\lambda,z,z') \big|\leq C_\alpha
\lambda^{-\alpha}(1+\lambda d(z,z'))^{-\frac{n-1}2},
\end{gathered}\end{equation}
\begin{equation}\label{beans}\begin{gathered}
\big| \partial_\lambda^\alpha b(\lambda,z,z') \big|\leq C_{\alpha, X}
\lambda^{-\alpha}(1+\lambda d(z,z'))^{-K} \text{ for any } K>0,
\end{gathered}\end{equation}
and
\begin{equation}\label{beanc}\begin{gathered}
\big| \partial_\lambda^\alpha c(\lambda,z,z') \big|\leq C_{\alpha, X}
\lambda^{-\alpha}.
\end{gathered}\end{equation}
\end{proposition}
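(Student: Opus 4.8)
\medskip\noindent\emph{Proof strategy.} The plan is to exploit the exact separability of the metric cone and then microlocalize. Writing $L^2(X)\cong L^2\big((0,\infty),r^{n-1}\,dr\big)\otimes L^2(Y)$ and decomposing along the eigenbasis $\{\varphi_k\}$ of $\Delta_h$ (with eigenvalues $\mu_k^2$), the Friedrichs extension $\Delta_g$ acts on the $k$-th sector as the Bessel operator $-\partial_r^2-\tfrac{n-1}r\partial_r+\mu_k^2 r^{-2}$, whose spectral resolution is the Hankel transform of order $\nu_k=\sqrt{\mu_k^2+(n-2)^2/4}$. This yields the closed formula
\begin{equation*}
dE_{\sqrt{\Delta_g}}(\lambda)(z,z')=c_n\,\lambda\,(rr')^{-\frac{n-2}2}\,\mathcal K_{\lambda r,\lambda r'}(y,y'),
\end{equation*}
where $\mathcal K_{s,s'}$ is the Schwartz kernel on $Y$ of $\mathcal F_{s,s'}(\sqrt B)$, with $B:=\Delta_h+(n-2)^2/4$ and $\mathcal F_{s,s'}(\nu):=J_\nu(s)J_\nu(s')$. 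Since the cone is dilation invariant the left-hand side is homogeneous, so one may reduce to $\lambda=1$ and track the decay purely as a function of the scale-invariant quantity $\lambda\,d(z,z')$, using the cone distance formula $d(z,z')^2=r^2+{r'}^2-2rr'\cos\!\big(\min(d_Y(y,y'),\pi)\big)$; the $\partial_\lambda^\alpha$-bounds then follow from Euler's homogeneity relation together with $J_\nu'=\tfrac12(J_{\nu-1}-J_{\nu+1})$, which shows that $\lambda\partial_\lambda$ lands on neighbouring Hankel orders obeying the same estimates.

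\medskip\noindent Next I would build the partition of unity. Take $Q_0(\lambda)$ to be a spectrally- and spatially-localized cutoff to $\{\lambda r\lesssim1\}\cup\{\lambda\,d(z,z')\lesssim1\}$, i.e.\ the $\lambda^{-1}$-neighbourhood of the cone tip together with that of the diagonal; for $Q_1(\lambda),\dots,Q_N(\lambda)$ take dilation-conjugated pseudodifferential operators on $X$ whose operator wavefront sets lie in small conic cells covering the part of $S^*X$ over $\{\lambda r\gtrsim1\}$, chosen fine enough that for each $k$ the set $\mathrm{WF}'(Q_k)\times\mathrm{WF}'(Q_k)$ contains no pair of conjugate points of the geodesic flow and meets no caustic of the exponential map. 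Compactness of $Y$ makes the relevant portion of the sphere bundle compact, so finitely many cells suffice and $N$ can be chosen independent of $\lambda$; after the rescaling $\lambda=1$ this is an ordinary microlocal partition of unity on the fixed space $X$.

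\medskip\noindent For the blocks with $k=0$ or $k'=0$ the kernel is supported within distance $O(\lambda^{-1})$ either of the diagonal or of the tip, and there only the crude bound \eqref{beanc} is needed: summing the Hankel series directly, Weyl's law $\#\{k:\nu_k\le\Lambda\}\lesssim\Lambda^{n-1}$ for $B$ on $Y$ together with the uniform bound $|J_\nu(s)|\lesssim\nu^{-1/3}$ and the exponential smallness of $J_\nu(s)$ for $s\ll\nu$ give absolute convergence with the stated estimate, the $\lambda$-derivatives being handled by the recurrence. For the diagonal blocks with $1\le k\le N$ one is in the regime $\lambda r\sim\lambda r'\gtrsim1$, $\lambda\,d(z,z')\gtrsim1$; here I would feed the Olver uniform asymptotics of $J_\nu$ (oscillatory when $s\gg\nu$, Airy-type in the transition zone, exponentially small when $s\ll\nu$) into $\mathcal F_{s,s'}(\sqrt B)$, realize the resulting kernel as an oscillatory integral over $Y$, and resolve it by composing with the half-wave group $e^{it\sqrt B}$ and invoking finite propagation speed on $Y$: the stationary points of the combined phase are exactly the geodesics of $X$ joining $z$ and $z'$, within $\mathrm{supp}\,Q_k$ only the direct one of length $d(z,z')$ survives, its Hessian is nondegenerate by the no-conjugate-points condition, and stationary phase yields the two terms $e^{\pm i\lambda d(z,z')}a_\pm$ with $a_\pm$ a classical symbol of order $-(n-1)/2$ in $\lambda\,d(z,z')$, uniformly; the non-stationary and tail contributions — in particular the tip-diffracted wave, which carries the distinct phase $\lambda(r+r')$ and so is non-stationary on $\mathrm{supp}\,Q_k$ — are collected into the rapidly decaying remainder $b$. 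The symbol bounds \eqref{bean}--\eqref{beans} are then standard stationary-phase bookkeeping (and the interior off-diagonal blocks $1\le k\neq k'\le N$ are handled by the same argument).

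\medskip\noindent The main obstacle is making the resummation of the Hankel series rigorous: one cannot argue term by term, since the number of modes $\nu_k\lesssim\max(s,s')$ that actually contribute grows, so the Bessel asymptotics must be implemented at the level of the functional calculus of $B$ — e.g.\ writing $J_\nu(s)J_\nu(s')$ as a Fourier/contour integral in which $\nu$ is replaced by $\sqrt B$ and then using the wave equation on $Y$ — and one must prove that the leading singular behaviour of the resulting kernel is governed exactly by the cone distance $d(z,z')$, while the caustics of the exponential map (which arise from $d_Y(y,y')=\pi$ and from conjugate points of $Y$ itself) are confined to the complement of the supports of $Q_1,\dots,Q_N$ and hence only ever appear inside the $Q_0$-blocks or in $b$. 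Carrying this out uniformly across the regimes $\lambda r\lesssim1$, $\lambda r\sim\lambda r'$ and $\lambda r\gg\lambda r'$, with uniform control of every $\lambda$-derivative, is where the real work lies.
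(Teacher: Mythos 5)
You should first note that this paper does not actually prove Proposition \ref{prop:localized spectral measure}: it is imported verbatim from \cite{ZZ1}, whose proof combines the scaling invariance of the cone (reduction to $\lambda=1$), the Cheeger--Taylor/Hankel description of the functional calculus near the tip (the $Q_0$ blocks), and, away from the tip, the Legendre/Lagrangian-distribution description of the spectral measure coming from the resolvent and spectral-measure construction of \cite{GHS,HZ}. Your sketch has the same skeleton (separation of variables, scaling, $Q_0$ near the tip via Bessel bounds and Weyl counting, microlocal cutoffs elsewhere), so at the level of strategy it is aligned with the cited proof.

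There is, however, a genuine gap at the decisive step. You propose to choose $Q_1,\dots,Q_N$ so fine that their wavefront sets meet no conjugate points or caustics, so that only the direct geodesic contributes, its Hessian is nondegenerate, and \eqref{beanQ}--\eqref{beans} follow from routine stationary phase. In the present generality (arbitrary compact $(Y,h)$) this cannot be arranged: conjugacy is a property of pairs of points joined by a geodesic, not of a single microlocal cell, and refining the cover does not remove conjugate points along the direct geodesic between $z$ and $z'$. The actual content of \cite{GHS,HZ,ZZ1} is that microlocalizing with the \emph{same} $Q_k$ on both sides separates the one-parameter families of geodesics, and the decay $(1+\lambda d(z,z'))^{-(n-1)/2}$ for $Q_k\,dE_{\sqrt{\Delta_g}}\,Q_k^*$ is read off from the Legendrian structure of the spectral measure, not from a nondegenerate critical point; for mixed blocks the kernel in general only has the degenerate oscillatory form \eqref{QkEQk'} with $\ell\geq 1$ extra variables, which is exactly the phenomenon your argument assumes away---and for the same reason your parenthetical claim that the interior off-diagonal blocks $1\le k\neq k'\le N$ are ``handled by the same argument'' is false (the proposition deliberately asserts \eqref{beanQ} only for $k=k'$; the off-diagonal information requires the classification \eqref{cl-N}). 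In addition, the resummation of the Hankel series with uniform Olver asymptotics and uniform control of all $\lambda$-derivatives, which you yourself flag as ``where the real work lies,'' is left undone, so the symbol bounds \eqref{bean}--\eqref{beans} are not established. A minor structural point: in \cite{ZZ1} the cutoff $Q_0(\lambda)$ localizes only near the cone tip $\lambda r\lesssim 1$; no separate diagonal cutoff is needed, since \eqref{bean} is vacuous in the regime $\lambda d(z,z')\lesssim 1$.
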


This proposition is enough to show the validity of Strichartz estimates \eqref{stri} and \eqref{V-stri}, but not to treat the endpoint estimates \eqref{V-stri'}: more precisely, in this latter case we need a double endpoint inhomogeneous Strichartz estimate to deal with the potential term.
To this end, we classify the  $Q_k$ with $k\geq1$ in order to obtain more informations about the micro-localized spectral measure (for example, the sign of phase function). The same idea was used in \cite{GH, HZ,ZZ1}.
For the sake of  convenience, we restate these properties.  \vspace{0.2cm}

Recall that the $Q_k$ with $k\geq1$ (constructed in  \cite[Proposition 3.1]{ZZ1})  are micro-localized away from the cone tip.
By using \cite[Lemma 8.2]{HZ} (see also \cite[Lemmas 5.3 and 5.4]{GH}), we can divide $(k,k')$, $1 \leq k,k' \leq N$ into three classes
\begin{equation}\label{cl-N}
\{ 1, \dots, N \}^2 = J_{near} \cup J_{not-out} \cup J_{not-inc},
\end{equation}
so that
\begin{itemize}
\item
if $(k,k') \in J_{near}$, then $Q_k(\lambda) dE_{\sqrt{\Delta_g}} Q_{k'}(\lambda)^*$ can be written as in \eqref{beanQ}, i.e.
\begin{equation*}\begin{gathered}
\lambda^{n-1} \Big(  \sum_{\pm} e^{\pm
i\lambda d(z,z')}a_\pm(\lambda,z,z') +  b(\lambda, z, z') \Big),
\end{gathered}\end{equation*}
where $a_\pm$ and $b$ satisfy \eqref{beans} and \eqref{beanc}, respectively. That is, we have an estimate analogous to the one in Proposition 
\ref{prop:localized spectral measure}.

\item if $(k,k') \in J_{non-inc}$, then $Q_k(\lambda)$ is not incoming-related to $Q_{k'}(\lambda)$ in the sense that no point in the operator wavefront set (microlocal support)
of $Q_k(\lambda)$ is related to a point in the operator wavefront
set of $Q_{k'}(\lambda)$ by backward bicharacteristic flow;

\item if $(k,k') \in J_{non-out}$, then $Q_k(\lambda)$ is not outgoing-related to $Q_{k'}(\lambda)$ in the sense that no point in the operator wavefront set  of
$Q_k(\lambda)$ is related to a point in the operator wavefront set
of $Q_{k'}(\lambda)$ by forward bicharacteristic flow.
\end{itemize}
Therefore, as in \cite[Lemma 8.3, Lemma 8.5]{HZ}, we have the property of the Schwartz kernel of $Q_k(\lambda) dE_{\sqrt{\Delta_g}} Q_{k'}(\lambda)^*$ stated in \cite[Lemma 4.1]{ZZ2}.
The essential key point is that the phase function in the oscillatory expression of the Schwartz kernel of  the above microlocalized spectral measure has
an unchanged sign when $(k,k') \in J_{non-inc}$ or $(k,k') \in J_{non-out}$. More precisely, there exists a small constant $c>0$ such that the phase function $\Phi\leq -c$ if $(k,k') \in J_{non-out}$
and $\Phi\geq c$ when $(k,k') \in J_{non-inc}$. Let us illustrate the idea.   If $Q_k$
is not outgoing-related to $Q_{k'}$, since the other terms in \cite[Lemma 4.1]{ZZ2} follow the same idea, we only consider 
\begin{equation}\label{QkEQk'}
\begin{split}
Q_k(\lambda) dE_{\sqrt{\Delta_g}}(\lambda)Q_{k'}(\lambda)^*=\int_{\R^\ell} e^{i\lambda\Phi(z,z',v)}\lambda^{n-1+\frac \ell2}a(\lambda,z,z',v) dv
\end{split}
\end{equation}
where $\Phi(z,z',v)\leq -c<0$ and $|(\lambda\partial_\lambda)^\alpha a|\leq C_\alpha $.
Here the parameter $0\leq \ell\leq n-1$ is connected to degenerate rank of Jacobi field along geodesic connecting the points $z,z'$ (which corresponds to the degenerate rank of the projection from the phase space to the base). Following the previous result in \cite{GHS, HZ} and references therein, one finds that if $\ell=0$ (which occurs if there is no conjugate points in the geodesic), then the expression (without integration) is similar to \eqref{beanQ} in which the conjugate points are separated. If $\ell>0$, the microlocalized propagator is
\begin{equation}\label{mic-propagator}
\begin{split}
\int_0^\infty e^{i(t-\tau)\sqrt{1+\lambda^2}}\int_{\R^\ell} e^{i\lambda\Phi(z,z',v)}\lambda^{n-1+\frac \ell2}a(\lambda,z,z',v) dv d\lambda,
\end{split}
\end{equation} 
and this brings a difficulty to obtain the 
dispersive estimates at high frequency. 
If we restrict to $\tau>t$, then the derivative of the phase function has a positive lower bound due to the fact that $\Phi$ and $t-\tau$ have the same sign. Hence, we can overcome these difficulties by integration by parts to obtain a microlocalized dispersive estimates (see Subsection \ref{sec:mic-dis}).

\vspace{0.2cm}

For our purpose,  we need the Littlewood-Paley square function estimates associated to the operator $\Delta_g$.  Let $\varphi\in C_c^\infty(\mathbb{R}\setminus\{0\})$ take values in
$[0,1]$ and be compactly supported in $[1/2,2]$ such that  
\begin{equation}\label{dp}
1=\sum_{j\in\Z}\varphi(2^{-j}\lambda), \quad \varphi_0(\lambda):=\sum_{j\leq0} \varphi(2^{-j}\lambda),\quad\lambda>0.
\end{equation}

\begin{proposition}[\cite{ZZ2} Proposition 2.6]\label{prop:square}  Let $(X,g)$ be a metric cone of dimension $n\geq3$ and let $\Delta_g$ be the
positive Laplace-Beltrami operator on $X$. Then for $1<p<\infty$,
there exist constants $c_p$ and $C_p$ depending on $p$ such that
\begin{equation}\label{square}
c_p\|f\|_{L^p(X)}\leq
\big\|\big(\sum_{j\in\Z}|\varphi(2^{-j}\sqrt{\Delta_g})f|^2\big)^{\frac12}\big\|_{L^p(X)}\leq
C_p\|f\|_{L^p(X)}
\end{equation}
and
\begin{equation}\label{squareL}
c_p\|f\|_{L^{p,2}(X)}\leq
\big\|\big(\sum_{j\in\Z}|\varphi(2^{-j}\sqrt{\Delta_g})f|^2\big)^{\frac12}\big\|_{L^{p,2}(X)}\leq
C_p\|f\|_{L^{p,2}(X)}
\end{equation}
where with $L^{p,2}(X)$ we are denoting the standard Lorentz spaces on $X$.
\end{proposition}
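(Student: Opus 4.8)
The plan is to derive the square function equivalence \eqref{square}--\eqref{squareL} from two structural facts available for the Friedrichs Laplacian on a metric cone, after which the estimate becomes an instance of vector-valued Calder\'on--Zygmund theory. The first fact is that $(X,d,\mathrm{vol}_g)$ is a space of homogeneous type: since $\mathrm{vol}_g = r^{n-1}\,\mathrm{d}r\,\mathrm{d}\mathrm{vol}_h$, the doubling bound $\mathrm{vol}(B(z,2\rho))\lesssim \mathrm{vol}(B(z,\rho))$ holds uniformly in $z$ and $\rho$ by a short case analysis according to whether $\rho$ is small, comparable, or large relative to the distance of $z$ to the cone tip. The second is that the heat semigroup $e^{-t\Delta_g}$ has a kernel obeying the Gaussian upper bound $|e^{-t\Delta_g}(z,z')|\lesssim \mathrm{vol}(B(z,\sqrt t))^{-1}\exp(-d(z,z')^2/(Ct))$ for all $t>0$, which was established in \cite{Mo,L2}; we shall also use that $\cos(t\sqrt{\Delta_g})$ propagates with unit speed, a property of the Friedrichs extension on the cone. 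Because $X$ is scaling invariant, uniformity of all estimates in $j\in\Z$ (in particular in the low-frequency regime $j\to-\infty$) is automatic.

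First I would dispose of the case $p=2$ and of two easy reductions. At $p=2$ the claim is immediate from the spectral theorem, since
\[
\|Sf\|_{L^2}^2=\int_0^\infty\Big(\sum_{j\in\Z}|\varphi(2^{-j}\lambda)|^2\Big)\,\mathrm{d}\langle E_{\sqrt{\Delta_g}}(\lambda)f,f\rangle
\]
and $1/3\le\sum_{j\in\Z}|\varphi(2^{-j}\lambda)|^2\le 1$ uniformly on $(0,\infty)$ (the spectrum of $\Delta_g$ on the non-compact cone is $[0,\infty)$ with no eigenvalue at $0$). Next, the lower bounds in \eqref{square}, \eqref{squareL} follow from the upper bounds by duality: with $\tilde\varphi(\lambda):=\varphi(\lambda)\big/\sum_{k}\varphi(2^{-k}\lambda)^2\in\CC_c^\infty(\R\setminus\{0\})$ one has $\sum_{j}\tilde\varphi(2^{-j}\lambda)\varphi(2^{-j}\lambda)=1$ on $(0,\infty)$, hence $\langle f,g\rangle=\sum_j\langle\varphi(2^{-j}\sqrt{\Delta_g})f,\tilde\varphi(2^{-j}\sqrt{\Delta_g})g\rangle$, and Cauchy--Schwarz in $j$ followed by H\"older in $L^p\cdot L^{p'}$ (resp. $L^{p,2}\cdot L^{p',2}$) reduces the lower bound to the upper square function bound for $\tilde\varphi$, which has the same structure as $\varphi$. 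Likewise $2<p<\infty$ follows from $1<p<2$ by duality, since $\varphi(2^{-j}\sqrt{\Delta_g})^*=\bar\varphi(2^{-j}\sqrt{\Delta_g})$.

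The core is the upper bound $\|Sf\|_{L^p(X)}\lesssim\|f\|_{L^p(X)}$ for $1<p<2$. I would regard $S$ as the operator attached to the $\ell^2$-valued singular integral $T\colon f\mapsto\big(\varphi(2^{-j}\sqrt{\Delta_g})f\big)_{j\in\Z}$, bounded $L^2(X)\to L^2(X;\ell^2)$ by the $p=2$ case. Writing $\varphi(2^{-j}\sqrt{\Delta_g})$ as a superposition of $\cos(t\sqrt{\Delta_g})$ against $2^j\widehat{\varphi}(2^j\cdot)$ (after replacing $\varphi$ by its even extension) and combining finite propagation speed with the Gaussian heat kernel bounds, one obtains for the kernels $K_j$ the Schwartz-type off-diagonal estimates
\[
|K_j(z,z')|\le C_M\,\mathrm{vol}(B(z,2^{-j}))^{-1}\big(1+2^j d(z,z')\big)^{-M},\qquad j\in\Z,\ M>0,
\]
together with the analogous bound for the differences $K_j(z,z')-K_j(\tilde z,z')$ when $d(z,\tilde z)\le 2^{-j}$ — the standard extraction of pointwise kernel bounds from $L^2$ bounds on $\varphi(2^{-j}\sqrt{\Delta_g})$ and its regularizations on a doubling space. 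Splitting the $j$-sum at $2^{-j}\sim d(z',z'')$ then yields the $\ell^2$-valued H\"ormander condition
\[
\int_{d(z,z'')\ge 2d(z',z'')}\Big(\sum_{j\in\Z}\big|K_j(z,z')-K_j(z,z'')\big|^2\Big)^{1/2}\,\mathrm{d}\mathrm{vol}_g(z)\le C .
\]
Vector-valued Calder\'on--Zygmund theory on the space of homogeneous type $X$ then gives that $T$ is of weak type $(1,1)$ and bounded $L^p(X)\to L^p(X;\ell^2)$ for $1<p<2$, hence \eqref{square}; the Lorentz bounds \eqref{squareL} follow by real interpolation between the strong $L^{p_0}$ and $L^{p_1}$ estimates for $p_0<p<p_1$, which preserves the second exponent.

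The step I expect to be the main obstacle is the analysis near the cone tip $r=0$: there $X$ is incomplete and fails to be Ahlfors regular, so the doubling property, the Gaussian heat kernel bounds and finite propagation speed must be invoked in forms that are genuinely global and uniform up to the singularity — exactly the content of \cite{Mo,L2} and of the finite-speed-of-propagation property for the Friedrichs extension on cones. Once these are secured, $X$ behaves, for the purposes of this proposition, like a standard doubling metric measure space carrying a self-adjoint operator with Gaussian bounds, and the remaining steps are the routine ones sketched above.
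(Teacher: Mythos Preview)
The paper does not give its own proof of this proposition; it is quoted from \cite{ZZ2} (their Proposition~2.6) and used as a black box. Your outline---doubling of the cone volume measure, the Gaussian heat-kernel upper bounds of \cite{Mo,L2}, finite propagation speed for $\cos(t\sqrt{\Delta_g})$, vector-valued Calder\'on--Zygmund theory on a space of homogeneous type, and real interpolation to pass to $L^{p,2}$---is exactly the standard route to Littlewood--Paley square function estimates in this generality, and it is essentially how the result is established in \cite{ZZ2} as well. So your proposal is correct and aligned with the cited source; the only small remark is that the heat-kernel bounds of \cite{L2} near the cone tip can in principle carry an extra factor depending on the bottom of the spectrum of $\Delta_h+(n-2)^2/4$, but here the proposition concerns the bare $\Delta_g$ (no potential), so $\nu_0=(n-2)/2\ge 1/2$ and the full Gaussian upper bound is available globally, which is precisely what your last paragraph anticipates.
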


\section{Microlocalized dispersive estimates and $L^2$-estimates}\label{sec:3}
In this section, we prove  dispersive  and  $L^2$-estimates for the  Klein-Gordon propagator associated to $\Delta_g$.
Since at this stage the potential term $V=V_0(y)r^{-2}$ is not taken into account, the main difficulties arise from the existence of conjugate points: we will make use of  microlocal techniques to separate them and establish  dispersive estimates.

\subsection{Microlocalized propagator and $L^2$-estimates}
We first define the microlocalized Klein-Gordon propagator. Denote $U(t)=e^{it\sqrt{1+\Delta_g}}$. For any $\sigma\geq0$, we define 
\begin{equation*}
\leftidx{^{\sigma}}U(t)=e^{it\sqrt{1+\Delta_g}}(1+\Delta_g)^{-\frac\sigma 2}.
\end{equation*}
In the following pages we will especially focus on the cases $\sigma=0$, $\sigma=1/2$ and $\sigma=1$.
Let $\varphi\in C_c^\infty([1/2, 2])$ as in \eqref{dp},
and define
\begin{equation}\label{U-dyn}
\begin{split}
\leftidx{^{\sigma}}U_j(t) &= \int_0^\infty e^{it\sqrt{1+\lambda^2}}
\varphi(2^{-j}\lambda) \langle \lambda\rangle^{-\sigma}dE_{\sqrt{\Delta_g}}(\lambda),\quad j\in \Z,
\end{split}
\end{equation}
where we are using the standard notation for the  bracket $\langle \lambda\rangle=(1+\lambda^2)^{1/2}$.
Using the $Q_k(\lambda)$ defined in Proposition \ref{prop:localized spectral measure}, we further define
\begin{equation}\label{U-mic}
\begin{gathered}
\leftidx{^{\sigma}}U_{j,k}(t) = \int_0^\infty e^{it\sqrt{1+\lambda^2}} \varphi(2^{-j}\lambda)\langle \lambda\rangle^{-\sigma}Q_{k}(\lambda)
dE_{\sqrt{\Delta_g}}(\lambda),~j\in\Z,\,  0\leq k\leq N,
\end{gathered}
\end{equation}
and
\begin{equation}\label{U-l-mic}
\begin{gathered}
\leftidx{^{\sigma}}U_{\mathrm{low},k}(t)=\sum_{j\leq0}\leftidx{^{\sigma}}U_{j,k}(t)  = \int_0^\infty e^{it\sqrt{1+\lambda^2}} \varphi_0(\lambda)\langle \lambda\rangle^{-\sigma}Q_{k}(\lambda)
dE_{\sqrt{\Delta_g}}(\lambda),
\end{gathered}
\end{equation}
where $\varphi_0(\lambda)=\sum_{j\leq0}\varphi(2^{-j}\lambda)$.
The above definitions of the operator are well-posed: indeed, we have
\begin{proposition}[$L^2$-estimates]\label{energy} For all $0\leq k\leq N$, there exists a constant $C$ independent of $t, j, k$ such that
\begin{equation}\label{L2-est-l}
\begin{split}
\|\leftidx{^{\sigma}}U_{j,k}(t)\|_{L^2\rightarrow L^2}\leq C,\quad j\leq 0,
\end{split}
\end{equation}
and 
\begin{equation}\label{L2-est-h}
\begin{split}
\|\leftidx{^{\sigma}}U_{j,k}(t)\|_{L^2\rightarrow L^2}\leq C2^{-j\cdot\sigma},\quad j\geq 0.
\end{split}
\end{equation}
\end{proposition}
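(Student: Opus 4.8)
The plan is to reduce everything to the spectral theorem combined with the properties of the $Q_k(\lambda)$. The key observation is that $\leftidx{^{\sigma}}U_{j,k}(t)$ is a function of $\sqrt{\Delta_g}$ composed with a bounded operator, so its $L^2\to L^2$ norm is controlled by the $L^\infty$ norm of the symbol times the $L^2\to L^2$ operator norm of the $Q_k(\lambda)$'s. First I would write, using the functional calculus, that for any $f\in L^2(X)$,
\begin{equation*}
\|\leftidx{^{\sigma}}U_{j,k}(t)f\|_{L^2}^2 = \int_0^\infty \big| e^{it\sqrt{1+\lambda^2}}\varphi(2^{-j}\lambda)\langle\lambda\rangle^{-\sigma}\big|^2 \, d\big\langle E_{\sqrt{\Delta_g}}(\lambda) Q_k(\lambda)^* Q_k(\lambda) f, f\big\rangle,
\end{equation*}
although one has to be slightly careful since $Q_k$ is itself $\lambda$-dependent; the cleanest route is instead to treat $\leftidx{^\sigma}U_{j,k}(t)$ as a composition. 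Since $|e^{it\sqrt{1+\lambda^2}}|=1$, the factor involving $t$ drops out entirely, and we are left to bound $\varphi(2^{-j}\lambda)\langle\lambda\rangle^{-\sigma}$ uniformly on the support of $\varphi(2^{-j}\cdot)$, i.e.\ on $\lambda\sim 2^j$.

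The second step is the elementary estimate on the symbol. On $\mathrm{supp}\,\varphi(2^{-j}\cdot)\subset[2^{j-1},2^{j+1}]$ we have $\langle\lambda\rangle\sim 1$ when $j\leq 0$, giving $\langle\lambda\rangle^{-\sigma}\lesssim 1$; and $\langle\lambda\rangle\sim 2^j$ when $j\geq 0$, giving $\langle\lambda\rangle^{-\sigma}\lesssim 2^{-j\sigma}$. This is exactly the split between \eqref{L2-est-l} and \eqref{L2-est-h}. The third ingredient is the uniform $L^2$-boundedness of the operators $Q_k(\lambda)$: since $\{Q_k(\lambda)\}_{k=0}^N$ is an operator partition of unity on $L^2(X)$ with $N$ independent of $\lambda$ (Proposition \ref{prop:localized spectral measure}), each $Q_k(\lambda)$ is bounded on $L^2$ with norm $\lesssim 1$ uniformly in $\lambda$, and similarly the truncated spectral projector $\int \varphi(2^{-j}\lambda)\,dE_{\sqrt{\Delta_g}}(\lambda)$ has norm $\leq 1$. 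Combining these via Cauchy–Schwarz (or via writing $\leftidx{^\sigma}U_{j,k}(t) = \leftidx{^\sigma}U_j(t)\,Q_k(\sqrt{\Delta_g})$-type bookkeeping, handling the $\lambda$-dependence of $Q_k$ inside the integral) yields the claimed bounds, with the constant $C$ depending only on $N$ and the uniform bound on $\|Q_k(\lambda)\|_{L^2\to L^2}$, hence independent of $t,j,k$. The estimate for $\leftidx{^\sigma}U_{\mathrm{low},k}(t)$ then follows by summing the geometric-type series, or directly since $\varphi_0$ is itself a bounded symbol supported in $\lambda\lesssim 1$.

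The only genuinely delicate point — and the one I would be most careful about — is the $\lambda$-dependence of $Q_k(\lambda)$ inside the spectral integral: the expression $\int e^{it\sqrt{1+\lambda^2}}\varphi(2^{-j}\lambda)\langle\lambda\rangle^{-\sigma}Q_k(\lambda)\,dE_{\sqrt{\Delta_g}}(\lambda)$ is not literally $g(\sqrt{\Delta_g})$ for a scalar $g$, so one cannot invoke the functional calculus verbatim. The resolution is that this operator-valued integral is exactly the object whose Schwartz kernel is described by \eqref{beanQ}–\eqref{beanc} (after integrating the spectral-measure kernel against the bounded multiplier), and its $L^2$-boundedness with the stated norm is precisely what is extracted there; alternatively, one appeals to the $TT^*$ structure used in \cite{ZZ1, ZZ2}, where $\int \varphi(2^{-j}\lambda)^2 Q_k(\lambda)\,dE_{\sqrt{\Delta_g}}(\lambda)\,Q_k(\lambda)^*$ is shown to be uniformly bounded on $L^2$. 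Everything else is a routine consequence of unimodularity of the Klein–Gordon multiplier and the two-case estimate on $\langle\lambda\rangle^{-\sigma}$.
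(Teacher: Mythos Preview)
Your proposal ultimately lands on the same argument the paper uses---the $TT^*$ computation of $\leftidx{^{\sigma}}U_{j,k}(t)\,\leftidx{^{\sigma}}U_{j,k}(t)^*$---but the execution is incomplete and one key ingredient is missing. Your first two attempted routes (direct spectral theorem, and the factorization $\leftidx{^{\sigma}}U_{j,k}(t)=\leftidx{^{\sigma}}U_{j}(t)\,Q_k(\sqrt{\Delta_g})$) do not work, as you yourself note, precisely because $Q_k$ is genuinely $\lambda$-dependent and cannot be pulled outside the spectral integral. So the only actual proof in your write-up is the final paragraph's appeal to the $TT^*$ structure, which you do not carry out.

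The paper fills this in as follows. By \cite[Lemma 5.3]{HZ} one has
\[
\leftidx{^{\sigma}}U_{j,k}(t)\,\leftidx{^{\sigma}}U_{j,k}(t)^*=\int \varphi^2(2^{-j}\lambda)\langle\lambda\rangle^{-2\sigma}\,Q_k(\lambda)\,dE_{\sqrt{\Delta_g}}(\lambda)\,Q_k(\lambda)^*,
\]
which is independent of $t$. One then integrates by parts in $\lambda$ to convert the Stieltjes integral against $dE$ into an ordinary integral against $E_{\sqrt{\Delta_g}}(\lambda)$. At this point the crucial input is not only that $Q_k(\lambda)$ is uniformly bounded on $L^2$, but also that $\lambda\partial_\lambda Q_k(\lambda)$ is uniformly bounded on $L^2$ (this comes from the construction in \cite{ZZ1} and \cite[Corollary 3.3]{HZ}). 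With both of these in hand, the integrand is bounded in operator norm by $C\lambda^{-1}\langle\lambda\rangle^{-2\sigma}$, and integrating over the dyadic interval $\lambda\sim 2^j$ gives the bound $(1+2^{2j})^{-\sigma}$, which is exactly \eqref{L2-est-l}--\eqref{L2-est-h}. Your sketch never invokes the derivative bound on $Q_k$, and without it the integration-by-parts step (which is what makes the operator-valued spectral integral tractable) cannot be closed.
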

\begin{proof} The proof essentially follows the argument in \cite{HZ} in which Hassell and the last author considered the case of asymptotically conic manifolds. 
We  outline the proof for convenience. 

We first show that the above definition of the operator is well-posed.  To this end, it suffices
to show that the integrals in the definitions above are well-defined over any compact
dyadic interval in $(0, +\infty)$. Let $A(\lambda) = e^{it\sqrt{1+\lambda^2}}
\varphi(2^{-j}\lambda)\langle \lambda\rangle^{-\sigma}Q_{k}(\lambda)$. Then $A(\lambda)$ is compactly supported in $[a,b]$ with $a=2^{j-1}$ and $b=2^{j+1}$
and is $\mathcal{C}^1$ in $\lambda\in (0,+\infty)$. Integrating by parts, the
integral 
$$
\int_a^b A(\lambda) dE_{\sqrt{\Delta_g}}(\lambda)
$$
becomes
\begin{equation}\label{mean}
E_{\sqrt{\Delta_g}}(b) A(b) - E_{\sqrt{\Delta_g}}(a)
A(a) - \int_a^b \frac{d}{d\lambda} A(\lambda)
E_{\sqrt{\Delta_g}}(\lambda) \, d\lambda.
\end{equation}
From the construction of the pseudo-differential operator $Q_k(\lambda)$ in \cite{ZZ1} and \cite[Corollary 3.3]{HZ}, we can show that  $Q_k(\lambda)$ and each operator $\lambda
\partial_\lambda Q_k(\lambda)$ is bounded on $L^2(X)$
uniformly in $\lambda$. This implies that the integrals are well-defined over any dyadic compact
interval in $(0, +\infty)$, hence the operators $\leftidx{^{\sigma}}U_{j,k}(t)$ are
well-defined. 

Next we show that these operators are bounded on $L^2$. We have by  \cite[Lemma 5.3]{HZ},
\begin{equation}\begin{gathered}
\leftidx{^{\sigma}}U_{j,k}(t) \leftidx{^{\sigma}}U_{j,k}(t)^* = \int   \varphi\big(
\frac{\lambda}{2^j} \big) \varphi\big( \frac{\lambda}{2^j} \big)\langle \lambda\rangle^{-2\sigma}
Q_k(\lambda) dE_{\sqrt{\Delta_g}}(\lambda) Q_k(\lambda)^* \\
= -\int \frac{d}{d\lambda} \Big(  \varphi\big(
\frac{\lambda}{2^j} \big) \varphi\big( \frac{\lambda}{2^j} \big)
Q_k(\lambda) \langle \lambda\rangle^{-2\sigma}\Big) E_{\sqrt{\Delta_g}}(\lambda) Q_k(\lambda)^*d\lambda  \\
- \int  \varphi\big( \frac{\lambda}{2^j} \big)
\varphi\big( \frac{\lambda}{2^j} \big)\langle \lambda\rangle^{-2\sigma} Q_k(\lambda)
E_{\sqrt{\Delta_g}}(\lambda) \frac{d}{d\lambda}
Q_k(\lambda)^*d\lambda.
\end{gathered}\label{Uijk}\end{equation}
We first note that this is independent of $t$. We also recall that $Q_k(\lambda)$ and $\lambda
\partial_\lambda Q_k(\lambda)$ are bounded on $L^2(X)$
uniformly in $\lambda$. On the other hand,  the
integrand is a bounded operator on $L^2$, with an operator bound of
the form $C\lambda^{-1}\langle \lambda\rangle^{-2\sigma}$ where $C$ is uniform in $\lambda$. and, by the support properties of $\varphi$, we have that the
$L^2$-operator norm of the integral is therefore uniformly bounded by $(1+2^{2j})^{-\sigma}$, as we are integrating over a dyadic interval in $\lambda$.  This proves Proposition \ref{energy}.
\end{proof}

As a direct consequence, we have the following result:

\begin{proposition}[$L^2$-estimates for low energy]\label{energy'} Let $\leftidx{^{\sigma}}U_{\mathrm{low},k}(t)$ be defined in \eqref{U-l-mic}.
Then there exists a constant $C$ independent of $t, z, z'$ such that
$\|\leftidx{^{\sigma}}U_{\mathrm{low},k}(t)\|_{L^2\rightarrow L^2}\leq C$ for all $0\leq k\leq N$ and $\sigma\geq0$.
\end{proposition}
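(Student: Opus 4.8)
The plan is to reduce the statement to a $t$-independent, $\sigma=0$ estimate and then to run the $TT^{*}$ computation of Proposition~\ref{energy} over the whole low-frequency regime at once. First, since $e^{it\sqrt{1+\lambda^{2}}}$ and $\langle\lambda\rangle^{-\sigma}$ are scalar functions of $\lambda$, they commute through $Q_{k}(\lambda)$ and through $dE_{\sqrt{\Delta_g}}(\lambda)$ in the definition \eqref{U-l-mic}, so that
\[
\leftidx{^{\sigma}}U_{\mathrm{low},k}(t)=W_{k}\,e^{it\sqrt{1+\Delta_g}}(1+\Delta_g)^{-\sigma/2},\qquad W_{k}:=\int_{0}^{\infty}\varphi_{0}(\lambda)\,Q_{k}(\lambda)\,dE_{\sqrt{\Delta_g}}(\lambda).
\]
As $e^{it\sqrt{1+\Delta_g}}$ is unitary on $L^{2}(X)$ and $\|(1+\Delta_g)^{-\sigma/2}\|_{L^{2}\to L^{2}}\le 1$ for $\sigma\ge0$, it suffices to bound $\|W_{k}\|_{L^{2}\to L^{2}}$, uniformly in $k$.

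To this end I would compute, exactly as in \eqref{Uijk},
\[
W_{k}W_{k}^{*}=\int_{0}^{\infty}\varphi_{0}^{2}(\lambda)\,Q_{k}(\lambda)\,dE_{\sqrt{\Delta_g}}(\lambda)\,Q_{k}(\lambda)^{*},
\]
and integrate by parts in $\lambda$, moving the derivative off the spectral family. The boundary term at the upper endpoint vanishes by the support of $\varphi_{0}$, and the one at $\lambda=0$ vanishes because, the spectrum of $\Delta_g$ being purely continuous, $E_{\sqrt{\Delta_g}}(\lambda)\to0$ strongly as $\lambda\downarrow0$. One is then left with two integrals whose integrands are built from $\partial_{\lambda}Q_{k}(\lambda)$; on any fixed dyadic band $\lambda\sim1$ these are controlled verbatim as in the proof of Proposition~\ref{energy} — there, boundedness of $\lambda\,\partial_{\lambda}Q_{k}(\lambda)$ together with $\lambda\sim1$ is enough — and contribute $O(1)$.

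The only genuinely new difficulty, and the step I expect to be the main obstacle, is the accumulation of infinitely many dyadic scales as $\lambda\downarrow0$: the crude bound $\|W_{k}\|\le\sum_{j\le0}\|\leftidx{^{0}}U_{j,k}(0)\|$ only yields a divergent, logarithmic estimate, and the same logarithmic loss appears if one tries to bound the integration-by-parts integrand above using merely $\|\lambda\,\partial_{\lambda}Q_{k}(\lambda)\|_{L^{2}\to L^{2}}\le C$. I would resolve this using the structure of the partition of unity of \cite[Proposition~3.1]{ZZ1}: at low energies the operators $Q_{k}(\lambda)$ reduce, up to a harmless modification, to a fixed $\lambda$-independent microlocal partition, so that $\partial_{\lambda}Q_{k}(\lambda)\equiv0$ there and the residual integrand is in fact supported in the finite region $\lambda\sim1$; equivalently, since the pieces $\leftidx{^{0}}U_{j,k}(0)$ are spectrally supported in $[2^{j-1},2^{j+1}]$ one has $\leftidx{^{0}}U_{j,k}(0)\,\leftidx{^{0}}U_{j',k}(0)^{*}=0$ for $|j-j'|\ge2$, and together with the localization of $Q_{k}(\lambda)$ at frequency scale $\lambda$ this makes $\{\leftidx{^{0}}U_{j,k}(0)\}_{j\le0}$ an almost-orthogonal family to which the Cotlar--Stein lemma applies. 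Either route gives $\|W_{k}W_{k}^{*}\|_{L^{2}\to L^{2}}\le C$, hence $\|W_{k}\|_{L^{2}\to L^{2}}\le C$, and the proposition follows.
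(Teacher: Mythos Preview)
Your reduction to the $t$- and $\sigma$-free operator $W_k=\int_0^\infty\varphi_0(\lambda)Q_k(\lambda)\,dE_{\sqrt{\Delta_g}}(\lambda)$ is correct, and you are right that the accumulation of infinitely many dyadic scales as $\lambda\downarrow0$ is the only real obstruction. But neither of your proposed resolutions actually closes. The claim that the $Q_k(\lambda)$ become $\lambda$-independent for small $\lambda$ is not asserted anywhere in this paper; it would have to be dug out of the specific construction in \cite{ZZ1}, and it is not a consequence of the properties recorded in Proposition~\ref{prop:localized spectral measure}. The Cotlar--Stein route only half works: the vanishing of $T_jT_{j'}^*$ for $|j-j'|\ge2$ holds because in that product the two copies of $dE$ sit adjacent and collapse to a single integral carrying disjointly supported cutoffs. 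Cotlar--Stein, however, also requires decay of $T_j^*T_{j'}$, and there the spectral measures are separated by $Q_k(\lambda)^*Q_k(\mu)$ and do not collapse. Your phrase ``localization of $Q_k(\lambda)$ at frequency scale $\lambda$'' is exactly the missing ingredient---it would say that $Q_k(\lambda)$ approximately preserves the spectral subspace at scale $\lambda$---but nothing in the stated hypotheses gives you that.

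The paper avoids this by a Littlewood--Paley argument rather than a direct $TT^*$/Cotlar--Stein argument. It applies the square function of Proposition~\ref{prop:square} to the output, writing $\|\leftidx{^{\sigma}}U_{\mathrm{low},k}(t)f\|_{L^2}^2\sim\sum_\ell\|\varphi(2^{-\ell}\sqrt{\Delta_g})\,\leftidx{^{\sigma}}U_{\mathrm{low},k}(t)f\|_{L^2}^2$, passes the projector $\varphi(2^{-\ell}\sqrt{\Delta_g})$ to the input side, and observes that for each $\ell$ only the dyadic pieces $\leftidx{^{\sigma}}\tilde U_{j,\ell,k}$ with $|j-\ell|\le5$ survive; each of these is bounded by Proposition~\ref{energy}, and the remaining $\ell^2$ sum in $\ell$ is closed by the square function applied to $f$. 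The $\ell^2$-orthogonality is thus supplied by the spectral square-function identity on $L^2$, not by a Cotlar--Stein hypothesis that you would have to verify operator by operator.
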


\begin{proof}
Let $f\in L^2$, by using the Littlewood-Paley theory in Proposition \ref{prop:square} , we have 
\begin{equation*}
\begin{split}
\|\leftidx{^{\sigma}}U_{\mathrm{low},k}(t)f\|^2_{L^2}&=\sum_{\ell\in\Z}\|\varphi(2^{-\ell}\sqrt{\Delta_g})\,\leftidx{^{\sigma}}U_{\mathrm{low},k}(t)f\|^2_{L^2}\\
&=\sum_{\ell\in\Z}\|\leftidx{^{\sigma}}U_{\mathrm{low},k}(t)\varphi(2^{-\ell}\sqrt{\Delta_g})\tilde{\varphi}(2^{-\ell}\sqrt{\Delta_g})f\|^2_{L^2}\\
&\leq \sum_{\ell\in\Z}\|\sum_{j\leq0} \leftidx{^{\sigma}}{\tilde U}_{j,\ell,k}(t) \tilde{\varphi}(2^{-\ell}\sqrt{\Delta_g})f\|^2_{L^2}
\leq \|f\|^2_{L^2}.
\end{split}
\end{equation*}
where we choose $\tilde{\varphi}\in C_c^\infty([1/4, 4])$ such that $\varphi=\tilde{\varphi}\varphi$ and
\begin{equation*}
\begin{gathered}
\leftidx{^{\sigma}}{\tilde U}_{j,\ell,k}(t) = \int_0^\infty e^{it\sqrt{1+\lambda^2}} \varphi(2^{-j}\lambda) \varphi(2^{-\ell}\lambda)\langle \lambda\rangle^{-\sigma}Q_{k}(\lambda)
dE_{\sqrt{\Delta_g}}(\lambda)
\end{gathered}
\end{equation*}
which vanishes when $|j-\ell|\geq 5$ and satisfies \eqref{L2-est-l}. Therefore, by \eqref{L2-est-l}, we obtain
\begin{equation*}
\begin{split}
\|\leftidx{^{\sigma}}U_{\mathrm{low},k}(t)f\|^2_{L^2}
&\leq \sum_{\ell\in\Z}\left(\sum_{|j-\ell|\leq 5}\| \leftidx{^{\sigma}}{\tilde U}_{j,\ell,k}(t) \tilde{\varphi}(2^{-\ell}\sqrt{\Delta_g})f\|_{L^2}\right)^2\\&\leq \sum_{\ell\in\Z}\|\tilde{\varphi}(2^{-\ell}\sqrt{\Delta_g})f\|^2_{L^2}
\leq \|f\|^2_{L^2}.
\end{split}
\end{equation*}
This  completes the proof of Proposition \ref{energy'}.
\end{proof}

\subsection{Microlocalized dispersive estimates} \label{sec:mic-dis} In this subsection, we prove the dispersive estimates for the microlocalized propagators $\leftidx{^{\sigma}}U_{\bullet,k}(t) \leftidx{^{\sigma}}U_{\bullet,k'}(\tau)^*$ with suitable pairs $(k,k')$. 
The restriction on the pairs is necessary because of the existence of conjugate points.
To this end, we divide  the proof of the dispersive estimates into two cases:  low frequencies and high frequencies. We stress the fact that this approach seems to be quite natural as indeed the dispersion of the Klein-Gordon equation resembles  Schr\"odinger  for low frequencies and the wave equation for high frequencies.

\subsubsection{Low frequency estimates} First, we prove the microlocalized dispersive estimates for the low energy part.

\begin{proposition}[Low energy estimates]\label{prop:lDispersive} Let  $\leftidx{^{\sigma}}U_{\mathrm{low},k}(t) $ be defined as in  \eqref{U-l-mic}.
Then there exists a constant $C$ independent of $t, \tau, z, z'$, for all
$0\leq k,k'\leq N$ and $\sigma\geq0$, such that the dispersive estimate
\begin{equation} \label{l-}
\big\|\leftidx{^{\sigma}}U_{\mathrm{low},k}(t) (\leftidx{^{\sigma}}U_{\mathrm{low},k'}(\tau) )^*\big\|_{L^1\rightarrow L^\infty}\leq 
C (1+|t-\tau|)^{-\frac{n}2}; \end{equation}
holds if one of the following conditions holds:

\begin{itemize}
\item \label{l-near} $(k,k') \in J_{near}$ or $(k,k')=(0,k'), (k,0)$ and $t \neq \tau$;

\item \label{l-nonout} $(k,k')\in J_{non-out}$ and $t<\tau$;

\item \label{l-nonin} $(k,k')\in J_{non-inc}$ and $t>\tau$.

\end{itemize}

\end{proposition}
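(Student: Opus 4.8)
The plan is to follow the stationary phase / integration by parts strategy used in \cite{HZ} and \cite{ZZ1,ZZ2}, adapting it to the Klein-Gordon multiplier $e^{it\sqrt{1+\lambda^2}}$ restricted to low frequencies $\lambda\lesssim 1$. First I would write the kernel of $\leftidx{^{\sigma}}U_{\mathrm{low},k}(t)(\leftidx{^{\sigma}}U_{\mathrm{low},k'}(\tau))^*$ as
\begin{equation*}
\int_0^\infty e^{i(t-\tau)\sqrt{1+\lambda^2}}\varphi_0(\lambda)^2\langle\lambda\rangle^{-2\sigma}\big(Q_k(\lambda)dE_{\sqrt{\Delta_g}}(\lambda)Q_{k'}(\lambda)^*\big)(z,z')\,d\lambda,
\end{equation*}
and insert the expressions \eqref{beanQ}, \eqref{beanQ'} (for the $k=0$ or $k'=0$ cases) and, for the three classes $J_{near}$, $J_{non-out}$, $J_{non-inc}$, the oscillatory form \eqref{QkEQk'}. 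Since $\sigma\geq0$ only helps (it contributes a smooth, bounded, $\lambda$-decaying factor on the support of $\varphi_0$), I would just bound $\langle\lambda\rangle^{-2\sigma}$ together with the amplitude and track its $\lambda$-derivatives via Leibniz; the key phase to analyze is $\Psi(\lambda)=(t-\tau)\sqrt{1+\lambda^2}+\lambda d(z,z')$ (or $\pm\lambda d$, or $\lambda\Phi$ in the degenerate case).

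Next I would split into the regimes $|t-\tau|\lesssim 1$ and $|t-\tau|\gg 1$. In the first regime $(1+|t-\tau|)^{-n/2}\sim 1$, so it suffices to show the kernel is uniformly bounded; this follows by crudely estimating $\lambda^{n-1}$ against the support $\lambda\lesssim 1$ and using $(1+\lambda d)^{-(n-1)/2}$ together with the $\lambda$-integrability on $(0,2]$, exactly as in the low-energy part of \cite{ZZ1}. For $|t-\tau|\gg 1$ I would distinguish $d(z,z')\lesssim |t-\tau|$ and $d(z,z')\gg |t-\tau|$. In the former, the phase derivative is dominated by the Klein-Gordon part: $\partial_\lambda\sqrt{1+\lambda^2}=\lambda/\sqrt{1+\lambda^2}$, so on $\lambda\sim 2^{j}$, $j\leq 0$, one has $|\partial_\lambda\Psi|\sim 2^{j}|t-\tau|$ up to the $d$-contribution, and more usefully $|\partial_\lambda^2\sqrt{1+\lambda^2}|\sim 1$ uniformly for $\lambda\lesssim1$, so the second derivative of the phase is $\sim|t-\tau|$; a dyadic-in-$\lambda$ stationary phase estimate (van der Corput at each scale $2^j$ with the $d$-decay factor and the amplitude bound $|\partial_\lambda^\alpha(\cdot)|\lesssim \lambda^{-\alpha}$) then yields, after summing $\sum_{j\leq 0}$, the bound $|t-\tau|^{-n/2}$ — the $n/2$ (rather than $(n-1)/2$) power comes precisely from the non-degenerate Hessian $\partial_\lambda^2\sqrt{1+\lambda^2}\sim 1$ combined with the $\lambda^{n-1}$ measure factor, i.e. from the Schrödinger-like behavior at low frequency. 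In the sign-definite cases this is where the hypotheses $t<\tau$ on $J_{non-out}$ (where $\Phi\leq -c$) and $t>\tau$ on $J_{non-inc}$ (where $\Phi\geq c$) are used: then $(t-\tau)$ and $\Phi$ have the same sign, so $\partial_\lambda[(t-\tau)\sqrt{1+\lambda^2}+\lambda\Phi]$ has no cancellation with the large part and repeated non-stationary integration by parts in $\lambda$ (and, if $\ell>0$ in \eqref{QkEQk'}, also in $v$) gives rapid decay, which in particular beats $|t-\tau|^{-n/2}$.

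The case $d(z,z')\gg|t-\tau|$ is the easy tail: here $|\partial_\lambda\Psi|\geq d(z,z')-|t-\tau|\gtrsim d(z,z')$ (the Klein-Gordon group velocity $\lambda/\sqrt{1+\lambda^2}<1$ never catches up with the spatial separation), so non-stationary phase integration by parts in $\lambda$, $N$ times, produces a factor $(1+d(z,z'))^{-N}$ which dominates both $(1+|t-\tau|)^{-n/2}$ and the $\lambda^{n-1}$ growth; I would combine this with the $(1+\lambda d)^{-(n-1)/2}$ from \eqref{bean} to absorb the remaining powers. For the terms involving $b$ and $c$ (estimates \eqref{beans}, \eqref{beanc}) the argument is identical and in fact simpler since $b$ is rapidly decaying in $\lambda d$ and $c$ carries no oscillation in $d$, so only the Klein-Gordon phase matters.

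\textbf{Main obstacle.} I expect the delicate point to be the degenerate case $\ell>0$ in \eqref{mic-propagator}: one must carry out integration by parts in $\lambda$ while the amplitude $a(\lambda,z,z',v)$ is only controlled in the scaling-invariant form $|(\lambda\partial_\lambda)^\alpha a|\lesssim C_\alpha$ and while simultaneously handling the $v\in\R^\ell$ integral, making sure the constants do not blow up as $\ell$ ranges over $0,\dots,n-1$ and that the $v$-integration (over a region of bounded measure, thanks to the microlocal cutoffs) is harmless. Getting the bookkeeping right so that the sign condition on $\Phi$ is genuinely exploited — i.e. that $\partial_\lambda\Psi$ stays bounded below by $c\,\langle t-\tau\rangle$-type quantities uniformly in $v$ — is the crux; once that lower bound is in hand, repeated integration by parts gives decay of arbitrary order and the stated $(1+|t-\tau|)^{-n/2}$ follows a fortiori. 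The $J_{near}$ and $k=0$ sub-cases are comparatively routine, reducing after the $v$-free expansion \eqref{beanQ} to the one-dimensional oscillatory integral in $\lambda$ analyzed above.
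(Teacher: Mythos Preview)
Your treatment of the sign-definite cases $J_{non-out}$ and $J_{non-inc}$ is essentially correct and matches the paper's strategy: the sign hypothesis guarantees that $(t-\tau)\lambda/\sqrt{1+\lambda^2}$ and $\Phi$ have the same sign, so the phase has no stationary point and repeated integration by parts yields decay that beats $(1+|t-\tau|)^{-n/2}$. (The paper organizes this via the rescaling $\tilde\lambda=(\tau-t)\lambda$, which pulls out $(\tau-t)^{-n-\ell/2}$ and reduces matters to showing the rescaled integral is bounded; your direct-in-$\lambda$ description works too, and the $v$-integral is indeed harmless by compact support of $a$ in $v$ --- no integration by parts in $v$ is needed.)

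There is, however, a genuine gap in your $J_{near}$ (and $k=0$, and $b$, $c$) argument. Your plan ``van der Corput at each dyadic scale $2^j$ using $|\partial_\lambda^2\sqrt{1+\lambda^2}|\sim 1$, then sum over $j\leq 0$'' does \emph{not} produce $|t-\tau|^{-n/2}$ when $d(z,z')$ is small. Van der Corput with second-derivative lower bound $\sim|t-\tau|$ gives only $|t-\tau|^{-1/2}$ for each dyadic piece, and the amplitude contribution $\sum_{j\leq 0}2^{j(n-1)}(1+2^jd)^{-(n-1)/2}$ is $O(1)$ when $d\lesssim 1$; the total is only $|t-\tau|^{-1/2}$. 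The extra $(n-1)/2$ powers you need do not come from the Hessian --- they come from the smallness of the $\lambda$-region that actually contributes. The correct mechanism, which is what the paper does (see the proof of Lemma~\ref{dispersive-l}), is to split at the threshold $\delta=|t-\tau|^{-1/2}$: for $\lambda\leq\delta$ the crude bound $\int_0^\delta\lambda^{n-1}\,d\lambda\sim\delta^n=|t-\tau|^{-n/2}$ already suffices; for $\lambda\geq\delta$ one uses \emph{first}-derivative integration by parts $N$ times with the operator $\big(\tfrac{(t-\tau)\lambda}{\sqrt{1+\lambda^2}}\pm d\big)^{-1}\partial_\lambda$, gaining $(|t-\tau|\lambda^2)^{-1}$ per step, which after summing the dyadic pieces gives $|t-\tau|^{-N}\delta^{n-2N}=|t-\tau|^{-n/2}$. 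The paper packages this as the Schr\"odinger-type rescaling $\tilde r=d/\sqrt{|t-\tau|}$, after which the critical scale sits at $\lambda\sim 1$; it then carries out a further case analysis ($I^\pm$, $II^\pm$, $II_2^1$, $II_2^2$) to handle the genuine stationary point of $(t-\tau)\sqrt{1+\lambda^2}-\lambda d$ that occurs when $d\sim|t-\tau|$ --- a regime your dichotomy ``$d\lesssim|t-\tau|$ vs.\ $d\gg|t-\tau|$'' does not resolve on its own.
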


\begin{remark}
As expected, the decay rate in the above dispersive estimates \eqref{l-} is $O((1+|t-\tau|)^{-n/2})$ which is same as the Schr\"odinger decay rate.
The result is independent of the index $\sigma$ since we cannot gain anything from the factor $\langle \lambda\rangle^{-2\sigma}$ at low energy.

\end{remark}

\begin{remark}
The last two cases are only needed to prove the endpoint Strichartz estimates \eqref{V-stri'}.
\end{remark}

\vspace{0.2cm}

\begin{proof}
In the case of the first condition we use a stationary phase argument together with Proposition \ref{prop:localized spectral measure} to prove \eqref{l-}. The proof when each of the last two conditions are satisfied relies on \cite[Lemma 4.1]{ZZ2}  instead.

We first consider the  second and third cases, where  \cite[Lemma 4.1]{ZZ2} is used. Furthermore, we only prove \eqref{l-} in the second case, since the third case follows from an analogous argument.
Recalling that $\varphi_0$ is given in \eqref{dp}, we set $\phi_0=\varphi_0^2$.
Under the assumption that $Q_k$ is not outgoing-related to $Q_{k'}$ and $\tau>t$,  we need to show 
\begin{equation}\label{l-nonout'}
\begin{split}
&\Big|\int_0^\infty e^{i(t-\tau)\sqrt{1+\lambda^2}}\chi_0(\lambda) \big(Q_k(\lambda)
dE_{\sqrt{\Delta_g}}(\lambda)Q^*_{k'}(\lambda)\big)(z,z')\Big|\\&\leq C (1+|t-\tau|)^{-n/2},
\end{split}
\end{equation}
where $\chi_0(\lambda)=\langle \lambda\rangle^{-2\sigma}\phi_0(\lambda)$ satisfies the same property of $\varphi_0$ since $0\leq \lambda\leq 2$.\vspace{0.2cm}

If $t-\tau>-1$, then one has $|t-\tau|\leq 1$ since $t<\tau$. Since $\chi_0$ is compactly supported in $[0,2]$, the estimate \eqref{l-nonout'} follows from the uniform boundedness of 
\cite[Lemma 4.1, (4.4)-(4.6)]{ZZ2}.
Thus it suffices to consider the case $t-\tau \leq -1$. As  in \eqref{dp}, we choose $\tilde{\varphi}\in C_c^\infty([\frac12,2])$ to be  such that
$\sum_m\tilde{\varphi}(2^{-m}(\tau-t)\lambda)=1$, and define
$$\tilde{\varphi}_0((\tau-t)\lambda)=\sum_{m\leq0}\tilde{\varphi}_m((\tau-t)\lambda), \quad \tilde{\varphi}_m((\tau-t)\lambda)=\tilde{\varphi}(2^{-m}(\tau-t)\lambda).$$ 
Then it suffices to show
\begin{equation}\label{l-nonout'-l}
\begin{split}
&\Big|\int_0^\infty e^{i(t-\tau)\sqrt{1+\lambda^2}}\chi_0(\lambda)\tilde{\varphi}_0((\tau-t)\lambda)\big(Q_k(\lambda)
dE_{\sqrt{\Delta_g}}(\lambda)Q^*_{k'}(\lambda)\big)(z,z')\Big|\\&\leq C (1+|t-\tau|)^{-n/2},
\end{split}
\end{equation}
and 
\begin{equation}\label{l-nonout'-h}
\begin{split}
&\sum_{m\geq1}\Big|\int_0^\infty e^{i(t-\tau)\sqrt{1+\lambda^2}}\chi_0(\lambda)\tilde{\varphi}_m((\tau-t)\lambda) \big(Q_k(\lambda)
dE_{\sqrt{\Delta_g}}(\lambda)Q^*_{k'}(\lambda)\big)(z,z')\Big|\\&\leq C (1+|t-\tau|)^{-n/2}.
\end{split}
\end{equation}
By using the expression of spectral measure \cite[Lemma 4.1, (4.4)-(4.6)]{ZZ2},
for any $0\leq \ell\leq n-1$ (we are replacing $k$ in \cite{ZZ2} by $\ell$ to avoid confusion), we obtain 
\begin{equation}
\begin{split}
\text{LHS of}\, \eqref{l-nonout'-l}\lesssim \int_0^\infty \lambda^{n-1+\frac \ell 2} \chi_0(\lambda)\tilde{\varphi}_0((\tau-t)\lambda) d\lambda\leq C |t-\tau|^{-n}.
\end{split}
\end{equation}
which implies \eqref{l-nonout'-l} since $|t-\tau|>1$.\vspace{0.2cm}

Next we prove \eqref{l-nonout'-h}. For each $\ell \geq 1$, we only consider  \cite[Lemma 4.1, (4.6)]{ZZ2} for simplicity, which is represented here in \eqref{QkEQk'} and \eqref{mic-propagator},  since the other two cases follow from a similar argument.
Set $\tilde{\lambda} = (\tau-t)\lambda$.  By using scaling and \eqref{QkEQk'}, we obtain
\begin{equation}\label{l-nonout'-h'}
\begin{split}
\text{LHS of}\, \eqref{l-nonout'-h}&= (\tau-t)^{-n - \frac{\ell}{2}}\int_0^\infty \int_{\R^{\ell}}
e^{i\big(-\sqrt{(\tau-t)^2+\tilde{\lambda}^2}+\frac{\tilde{\lambda}\Phi(z,z',v)}{\tau-t}\big)}\tilde{\lambda}^{n-1+\frac{\ell}2}
\\& \qquad\qquad\qquad\qquad \times \chi_0(\frac{\tilde{\lambda}}{\tau-t})\tilde{\varphi}_m(\tilde{\lambda}) a(\frac{\tilde{\lambda}}{\tau-t},y,y',\sigma,v)  \,
dv \, d\tilde{\lambda},\end{split} 
\end{equation} 
where $\Phi(z,z',v)\leq -c<0$ and $|(\lambda\partial_\lambda)^\alpha a|\leq C_\alpha $.
Define the operator
$$
L = \frac{i}{ \frac{-\Phi}{(\tau-t)}+\frac{2\tilde{\lambda}}{\sqrt{(\tau-t)^2+\tilde{\lambda}^2}}}\frac{\partial}{\partial \tilde{\lambda}}.
$$
Then by using $\tau-t\geq 1$ and $\Phi\leq -c$, we can use the induction argument to prove the facts $|(\tilde{\lambda} \partial_{\tilde{\lambda}} )^N (\chi_0 \tilde{\varphi}_m a)| \leq
C_N$ and
\begin{equation*}
\left|\big(\frac{\partial}{\partial\tilde{\lambda}}\big)^N\left( \Big(\frac{-\Phi}{(\tau-t)}+\frac{2\tilde{\lambda}}{\sqrt{(\tau-t)^2+\tilde{\lambda}^2}}\Big)^{-1}\right)\right|\lesssim \tilde{\lambda}^{-N}.
\end{equation*}
 Thus we gain a factor $\tilde{\lambda}^{-1}
\sim 2^{-m}$ via integration by parts each time. Hence for $\tau-t\geq1$ and $0\leq \ell\leq n-1$, we have
\begin{equation*}\begin{split}
\text{LHS of}\, \eqref{l-nonout'-h}\lesssim (\tau-t)^{-n - \frac{\ell}{2}}\int_{\tilde{\lambda}\sim 2^m}
\tilde{\lambda}^{n-1+\frac{\ell}2-N}\, d\tilde{\lambda}
\lesssim (\tau-t)^{-n} 2^{-m(N-n - \frac{\ell}2)}.
\end{split} 
\end{equation*}
Choosing $N$ large enough, we obtain \eqref{l-nonout'-h} by summing over
$m\geq1$, and thus we obtain \eqref{l-} when the second condition holds.\vspace{0.2cm}

We now prove \eqref{l-} when the first condition holds. Due to \eqref{cl-N}, the spectral measure
$Q_k(\lambda) dE_{\sqrt{\Delta_g}} Q_{k'}(\lambda)^*$ satisfies the conclusions of Proposition~\ref{prop:localized spectral measure} when either
$(k,k') \in J_{near}$, $k=0$ or $k'=0$.
Estimate \eqref{l-} then follows from Lemma \ref{dispersive-l}.
\end{proof}

 \begin{lemma}[Microlocalized dispersive estimates for low frequencies]\label{dispersive-l}
Assume  $(k,k')\in \{ 0, 1, \dots, N \}^2$ such that 
$Q_k(\lambda) dE_{\sqrt{\Delta_g}} Q_{k'}(\lambda)^*$ has a bound of the form appearing in \eqref{beanQ}.
Then there exists a constant $C$ independent of points $z,z'\in X$.
such that
\begin{equation}\label{disper-near}
\Big|\int_0^\infty e^{it\sqrt{1+\lambda^2}} \chi_0(\lambda)\big(Q_k(\lambda)
dE_{\sqrt{\Delta_g}}(\lambda)Q_{k'}^*(\lambda)\big)(z,z')\Big|\leq C (1+|t|)^{-\frac{n}2}
\end{equation}
where $\chi_0(\lambda)=\varphi^2_0(\lambda)\langle\lambda\rangle^{-2\sigma}$ and $\varphi_0\in C_c^\infty([0,2])$ is as in \eqref{dp}.
\end{lemma}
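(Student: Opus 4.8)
The plan is to estimate the oscillatory integral in \eqref{disper-near} by splitting into the regime where $|t|\lesssim 1$ (where boundedness is immediate from the uniform bounds \eqref{bean} and \eqref{beans}, since $\chi_0$ is compactly supported in $[0,2]$ and the kernel is integrable in $\lambda$ on $[0,2]$) and the regime $|t|\geq 1$, which is the substantive case. For $|t|\geq 1$, I would use the decomposition of the microlocalized spectral measure from Proposition \ref{prop:localized spectral measure}: the term $b(\lambda,z,z')$ contributes a rapidly-decaying (in $\lambda d(z,z')$) symbol with no oscillation beyond that of $e^{it\sqrt{1+\lambda^2}}$, so it is handled by a direct stationary-phase estimate in $\lambda$ alone; the main terms are the two pieces $\lambda^{n-1}e^{\pm i\lambda d(z,z')}a_\pm(\lambda,z,z')$, and the integral to control is
\begin{equation*}
I_\pm(t,z,z')=\int_0^\infty e^{it\sqrt{1+\lambda^2}\pm i\lambda d(z,z')}\,\lambda^{n-1}\chi_0(\lambda)a_\pm(\lambda,z,z')\,d\lambda.
\end{equation*}

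Next I would carry out a Littlewood–Paley decomposition in $\lambda$ adapted to the parameter $|t|$ (mirroring the dyadic splitting $\tilde\varphi_m$ used in the proof of Proposition \ref{prop:lDispersive}), writing $\chi_0(\lambda)=\sum_{m\le 0}\tilde\varphi_m(|t|\lambda)\chi_0(\lambda)$ since $\chi_0$ is supported in $[0,2]$ and $|t|\geq1$ forces $|t|\lambda\lesssim 1$ on the support, so only $m\le 0$ occurs. On each dyadic piece $\lambda\sim 2^m|t|^{-1}$ I rescale $\tilde\lambda=|t|\lambda$ and estimate the resulting integral. The phase is $\pm(|t|^{-1}\tilde\lambda\, d(z,z')) + \sqrt{t^2+\tilde\lambda^2}$; its $\tilde\lambda$-derivative is $\pm d(z,z')/|t| + \tilde\lambda/\sqrt{t^2+\tilde\lambda^2}$. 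When $d(z,z')/|t|$ is not comparable to $\tilde\lambda/|t|$ (i.e. when the two terms do not cancel), nonstationary phase / repeated integration by parts with the operator $L$ as in the preceding proof gains arbitrarily many powers of $\tilde\lambda^{-1}\sim 2^{-m}$, and summing over $m\le 0$ together with the $\lambda^{n-1}$ Jacobian gives the bound $|t|^{-n}$, which is stronger than $(1+|t|)^{-n/2}$. The delicate regime is where the phase has a (near-)critical point, which happens only for the sign making $\mp d(z,z')\sim |t|$; there I use a stationary phase estimate in one variable, where the $\lambda^{n-1}$ weight combined with the $(1+\lambda d(z,z'))^{-(n-1)/2}$ decay from \eqref{bean} and the second-derivative lower bound of the phase (which is $\sim (t^2+\tilde\lambda^2)^{-3/2}|t|^2\cdot$ appropriate scaling, giving $|t|^{-1}$ after accounting for the localization) yields exactly $(1+|t|)^{-n/2}$.

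I would organize the stationary-phase bound as follows: on the critical dyadic block $\lambda\sim |t|^{-1}2^m$, the contribution is $\lesssim 2^{m(n-1)}|t|^{-(n-1)}\cdot |t|^{-1}\cdot (\text{gain from }a_\pm\text{ decay})$; because $d(z,z')\sim |t|$ in this regime, the factor $(1+\lambda d(z,z'))^{-(n-1)/2}\sim (1+2^m)^{-(n-1)/2}$, and after summing the geometric-type series in $m\le 0$ one is left with $|t|^{-(n-1)}\cdot|t|^{-1}=|t|^{-n}$, again better than needed; the worst case giving only $|t|^{-n/2}$ is the single block $m=0$, $\lambda\sim|t|^{-1}$, where $\lambda^{n-1}\sim |t|^{-(n-1)}$, the $\lambda$-integration length is $\sim |t|^{-1}$, there is no stationary-phase gain, and yet $d(z,z')$ can be as large as $O(1)\cdot|t|$... here I must be careful and instead argue that when $d(z,z')\gtrsim |t|$ we gain from $(1+\lambda d(z,z'))^{-(n-1)/2}$, and when $d(z,z')\lesssim|t|$ the phase $\sqrt{1+\lambda^2}$ alone dominates and one runs the standard one-dimensional stationary phase for $e^{it\sqrt{1+\lambda^2}}$ against the symbol, gaining $|t|^{-1/2}$ per... this is the step I expect to be the main obstacle: reconciling the two competing phases $t\sqrt{1+\lambda^2}$ and $\lambda d(z,z')$ on the low-frequency band $\lambda\lesssim 1$ so that the output is uniformly $(1+|t|)^{-n/2}$ regardless of $d(z,z')$. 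I would resolve it by treating the full phase $\psi(\lambda)=t\sqrt{1+\lambda^2}+\lambda d(z,z')$ (fixing, say, the $+$ sign; the $-$ sign is symmetric), noting $\psi''(\lambda)=t(1+\lambda^2)^{-3/2}$ so $|\psi''|\sim|t|$ on $\lambda\le 2$, applying the van der Corput lemma to get $|I_+|\lesssim |t|^{-1/2}\big(\|\lambda^{n-1}\chi_0 a_+\|_{L^\infty}+\|\partial_\lambda(\lambda^{n-1}\chi_0 a_+)\|_{L^1}\big)$, and then iterating/bootstrapping: since the symbol bound \eqref{bean} is uniform and $n\ge 3$ forces $n/2\le n-1$... actually the clean route is to invoke that for $\lambda\lesssim 1$ the measure bound of \cite{ZZ1} already provides $L^1\to L^\infty$ decay $(1+|t|)^{-n/2}$ for $e^{it\Delta_g}$-type integrals via the same Schrödinger-like stationary phase, and the extra smooth cutoff $\langle\lambda\rangle^{-2\sigma}$ and $\varphi_0^2$ do not affect it; I would then present the van der Corput / stationary-phase computation in full for completeness, extracting the sharp $(1+|t|)^{-n/2}$ by combining $n$ integrations by parts away from the stationary point with a single stationary-phase block of size $|t|^{-1/2}$ in each of the (effectively) $n$ one-dimensional directions encoded in the $\lambda^{n-1}$ weight, exactly as in the Schrödinger case of \cite{ZZ1}.
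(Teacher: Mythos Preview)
Your proposal contains a concrete error and then becomes hand-wavy precisely at the step that matters.

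First, the error: you write that ``$|t|\geq1$ forces $|t|\lambda\lesssim 1$ on the support, so only $m\le 0$ occurs.'' This is false: $\chi_0$ is supported on $[0,2]$, so $|t|\lambda$ ranges up to $2|t|$, and you need dyadic pieces up to $m\sim\log_2|t|$, not $m\le 0$. More importantly, the scaling $\tilde\lambda=|t|\lambda$ is the \emph{wave} scaling, and it is the wrong one here. At low frequency $\lambda\lesssim 1$ the Klein--Gordon phase is Schr\"odinger-like, $t\sqrt{1+\lambda^2}\approx t+\tfrac{t}{2}\lambda^2$, so the natural variable is $\sqrt{t}\,\lambda$, not $t\lambda$.

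Second, the gap: van der Corput applied to $\psi(\lambda)=t\sqrt{1+\lambda^2}\pm\lambda d(z,z')$ gives only $|t|^{-1/2}$, and your mechanism for upgrading this to $|t|^{-n/2}$ (``$n$ one-dimensional directions encoded in the $\lambda^{n-1}$ weight'') is not correct: the integral is genuinely one-dimensional, and you cannot extract $n$ stationary-phase gains from it. The extra $(n-1)/2$ powers must come from the interplay between the $\lambda^{n-1}$ weight, the amplitude decay $(1+\lambda d)^{-(n-1)/2}$ in \eqref{bean}, and the location/width of the stationary region. You do not carry this out, and the appeal to ``the Schr\"odinger case of \cite{ZZ1}'' is not a proof, since the phase is different.

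The paper's route is cleaner and you should compare: it rescales $\lambda\mapsto\lambda/\sqrt{t}$ (setting $\tilde r=d(z,z')/\sqrt t$), which immediately pulls out the factor $t^{-n/2}$ from $\lambda^{n-1}\,d\lambda$ and reduces the task to showing that the rescaled integral
\[
\int_0^\infty e^{i\sqrt{t^2+t\lambda^2}\pm i\tilde r\lambda}\,\lambda^{n-1}\chi_0(\lambda/\sqrt t)\,a_\pm(\lambda/\sqrt t,z,z')\,d\lambda
\]
is bounded uniformly in $t$ and $\tilde r$. This is then done by a careful case split on the size of $\tilde r$ and of $\partial_\lambda\Phi=\frac{t\lambda}{\sqrt{t^2+t\lambda^2}}-\tilde r$ (further cutoffs $\varphi_0(\lambda)$, $\varphi_0(8\tilde r\lambda)$, $\varphi_0(\partial_\lambda\Phi)$), using repeated integration by parts off the stationary set and a direct measure estimate on the stationary set. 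The $b$-term is handled separately by a dyadic split at scale $\delta=|t|^{-1/2}$ and integration by parts using the operator $\big(\frac{\sqrt{1+\lambda^2}}{i\lambda t}\partial_\lambda\big)^N$. Identifying the Schr\"odinger scaling is the missing idea in your plan.
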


\begin{proof} 
By definition, we have that $\chi_0\in C_c^\infty([0, 2])$ and satisfies $|(\lambda\partial_\lambda)^{\ell}\chi_0(\lambda)|\leq C_\ell$ for all $\ell\geq0$. 
From Proposition~\ref{prop:localized spectral measure},
we need to consider 
\begin{equation}\label{ab}
\begin{split}
&\int_0^\infty e^{it\sqrt{1+\lambda^2}} \chi_0(\lambda)\big(Q_k(\lambda)
dE_{\sqrt{\Delta_g}}(\lambda)Q_{k'}^*(\lambda)\big)(z,z')
\\
&=\sum_\pm \int_0^\infty e^{it\sqrt{1+\lambda^2}}e^{\pm
i\lambda d(z,z')}\lambda^{n-1}\chi_0(\lambda)a_\pm(\lambda,z,z')d\lambda\\ &\quad+\int_0^\infty
e^{it\sqrt{1+\lambda^2}}\lambda^{n-1}\chi_0(\lambda)b(\lambda,z,z')d\lambda  
\end{split}
\end{equation}
 for either $(k,k')=(k,0)$ or $(k,k')=(0,k')$,
\begin{equation}\label{c}
\begin{split}
&\int_0^\infty e^{it\sqrt{1+\lambda^2}} \chi_0(\lambda)\big(Q_k(\lambda)
dE_{\sqrt{\Delta_g}}(\lambda)Q_{k'}^*(\lambda)\big)(z,z')
\\
&=\int_0^\infty
e^{it\sqrt{1+\lambda^2}}\lambda^{n-1}\chi_0(\lambda)c(\lambda,z,z')d\lambda  .
\end{split}
\end{equation}
where $a_\pm,b$ and $c$ are from Proposition ~\ref{prop:localized spectral measure}.

If $|t|\lesssim 1$, then \eqref{disper-near} directly follows from the compact support of $\chi_0$ and the boundednesses of $a_\pm,b$ and $c$ in Proposition ~\ref{prop:localized spectral measure}. Therefore from now on, by symmetry in time, it will be enough to consider 
the case $t\gg1$. 

Since \eqref{c} can be treated by following the argument of \eqref{ab} with the term $b$, we only consider \eqref{ab}. First we estimate \eqref{ab} when $t\gg1$. Set $r=d(z,z')$
and $\tilde{r}=r/\sqrt{t}$.  By scaling, it is enough to estimate
\begin{equation}\label{4.4?}
\begin{split}
&\int_0^\infty e^{it\sqrt{1+\lambda^2}} \chi_0(\lambda)\big(Q_k(\lambda)
dE_{\sqrt{\Delta_g}}(\lambda)Q_{k'}^*(\lambda)\big)(z,z')\\
&= t^{-\frac
n2}\sum_{\pm} \int_0^{\infty} e^{i\sqrt{t^2+t\lambda^2}}e^{\pm
i\tilde{r}\lambda}\lambda^{n-1}\chi_0(t^{-1/2}\lambda)a_\pm(t^{-1/2}\lambda,z,z')d\lambda
\\&\quad+\int_0^\infty e^{it\sqrt{1+\lambda^2}}\lambda^{n-1}\chi_0(\lambda)b(\lambda,z,z')d\lambda,
\end{split}
\end{equation}
where $a_\pm$ satisfies \eqref{bean}, hence
\begin{equation}\label{beans0-l}
\Big|\partial_\lambda^\alpha \big(a_\pm(t^{-1/2}\lambda,z,z')\big)
\Big|\leq C_\alpha \lambda^{-\alpha}(1+\lambda
\tilde{r})^{-\frac{n-1}2}.
\end{equation}
First, we estimate the second term on the RHS of   \eqref{4.4?}.
Recalling \eqref{beans}, we obtain
\begin{equation}\label{4.2}
\begin{split}
\Big|\big(\frac{\partial}{\partial\lambda}\big)^{N}b(\lambda,z,z')\Big|\leq
C_N\lambda^{-N}\quad \forall N\in\mathbb{N}.
\end{split}
\end{equation}
Let $\delta$ be a small constant to be chosen later and recall $\varphi$ and $\varphi_0$ defined in \eqref{dp}. Then
\begin{equation*}
\Big|\int_0^\infty e^{it\sqrt{1+\lambda^2}} \lambda^{n-1}b(\lambda,z,z')
\chi_0(\lambda)\varphi_0(\frac{\lambda}{\delta})d\lambda\Big|\leq
C\int_0^{2\delta}\lambda^{n-1}d\lambda\leq C\delta^n.
\end{equation*}
By \eqref{4.2},  we use  integration by parts $N$ times to obtain 
\begin{equation*}
\begin{split}
&\Big|\int_0^\infty e^{it\sqrt{1+\lambda^2}}
\chi_0(\lambda)\sum_{m\geq 1}\varphi\big(\frac{\lambda}{2^m\delta}\big)\lambda^{n-1}b(\lambda,z,z')
d\lambda\Big|\\
&\leq \sum_{m\geq 1}\Big|\int_0^\infty
\big(\frac{\sqrt{1+\lambda^2}}{i\lambda
t}\frac\partial{\partial\lambda}\big)^{N}\big(e^{it\sqrt{1+\lambda^2}}\big)
\chi_0(\lambda)
\varphi\big(\frac{\lambda}{2^m\delta}\big)\lambda^{n-1}b(\lambda,z,z')
d\lambda\Big|\\& \leq
C_N|t|^{-N}\sum_{m\geq1}\int_{2^{m-1}\delta}^{2^{m+1}\delta}\lambda^{n-1-2N}d\lambda\leq
C_N|t|^{-N}\delta^{n-2N},\quad N>\frac n2.
\end{split}
\end{equation*}
Choosing $\delta=|t|^{-\frac12}$, we have thus proved
\begin{equation}\label{4.3}
\begin{split}
&\Big|\int_0^\infty e^{it\sqrt{1+\lambda^2}} \chi_0(\lambda)\lambda^{n-1} b(\lambda,z,z')
d\lambda\Big|\leq C_N|t|^{-\frac n2}.
\end{split}
\end{equation}

 Next we consider the first term on the RHS of \eqref{4.4?}. As above, by using $\varphi$ and $\varphi_0$ as defined in \eqref{dp}, we split it into two parts. It suffices to prove that there exists a constant $C$
independent of $\tilde{r}$ and $t$ such that
\begin{equation*}
\begin{split}
I^\pm:=&\Big|\int_0^{\infty} e^{i\sqrt{t^2+t\lambda^2}}e^{\pm
i\tilde{r}\lambda}\lambda^{n-1}\chi_0(t^{-1/2}\lambda)a_\pm(t^{-1/2}\lambda,z,z')\varphi_0(\lambda)d\lambda\Big|\leq
C,\\II^\pm:=& \Big|\sum_{m\geq1}\int_0^{\infty} e^{i\sqrt{t^2+t\lambda^2}}e^{\pm
i\tilde{r}\lambda}\lambda^{n-1} \chi_0(t^{-1/2}\lambda) a_\pm(t^{-1/2}\lambda,z,z')\varphi(\frac{\lambda}{2^m})d\lambda\Big|\leq
C.
\end{split}
\end{equation*}
The estimate for $I^\pm$ is relatively straightforward by using the fact that the support of $\varphi_0$ is restricted to $\lambda\leq 2$ and by using \eqref{beans0-l}.
For $II^{+}$, we use $N$-times integration by parts to gain $\lambda^{-2N}$. 

Indeed, we first note
$$
e^{i\sqrt{t^2+t\lambda^2} + i \tilde{r} \lambda}= (L^+)^N (e^{i\sqrt{t^2+t\lambda^2} + i \tilde{r} \lambda}), \quad L^+ = \frac1i\big(\frac{t\lambda}{\sqrt{t^2+t\lambda^2}} + \tilde{r}\big)^{-1} \frac{\partial}{\partial \lambda}.
$$
On the support of $\chi_0$, it gives $0<\lambda<2\sqrt{t}$. Then  for $\ell\geq0$ and $t\gg1$, the induction argument follows
\begin{equation}\label{I-bp4}
\begin{split}
\partial^\ell_\lambda\Big[\big(\frac{t\lambda}{\sqrt{t^2+t\lambda^2}}+\tilde{r}\big)^{-1}\Big]\leq C_\ell \lambda^{-1-\ell}.
\end{split}
\end{equation}
By using \eqref{beans0-l} and \eqref{I-bp4}, 
we obtain
$$
II^{+}\lesssim \sum_{m \geq 1} \int_{\lambda \sim 2^m} \lambda^{n-1-2N} \, d\lambda \leq C.
$$
Now we  treat $II^-$. We first write
$II^-=II^-_1+II^-_2$, where (dropping the $-$ superscripts from here on)
\begin{equation*}
\begin{split}
II_1=&\Big|\sum_{m\geq1}\int_0^{\infty} e^{i\sqrt{t^2+t\lambda^2}}e^{-
i\tilde{r}\lambda}\lambda^{n-1}\chi_0(t^{-1/2}\lambda) a(t^{-1/2}\lambda,z,z')\varphi(\frac{\lambda}{2^m})
\varphi_0(8\tilde{r} \lambda) d\lambda\Big|, ~\\II_2=&\Big|\int_0^{\infty}
e^{i\sqrt{t^2+t\lambda^2}}e^{-
i\tilde{r}\lambda}\lambda^{n-1}\chi_0(t^{-1/2}\lambda)a(t^{-1/2}\lambda,z,z')
\left(1-\varphi_0(\lambda)\right) \big( 1 - \varphi_0(8\tilde{r} \lambda)
\big) d\lambda\Big|.
\end{split}
\end{equation*}
Let $\Phi(\lambda,\tilde{r})=\sqrt{t^2+t\lambda^2}-\tilde{r}\lambda$. We first
consider $II_1$. The integrand in $II_1$ vanishes 
when $\tilde{r}>1/8$ due to the supports of $\varphi$ and $\varphi_0$ (which implies $\lambda \leq (8 \tilde{r})^{-1}$ and $\lambda \geq
1$). Thus $1\leq \lambda<2\sqrt{t}$ and $\tilde{r}\leq1/8$, therefore $|\partial_\lambda\Phi| =
\frac{t\lambda}{\sqrt{t^2+t\lambda^2}} - \tilde{r} \geq \frac1{\sqrt{5}}\lambda- \tilde{r} \geq\frac1{100}\lambda$. As in \eqref{I-bp4}, 
on the support of $\chi_0(\lambda/\sqrt{t})$, for $\ell\geq0$ and $t\gg1$, we also use the induction argument to obtain
\begin{equation}\label{I-bp4'}
\begin{split}
\partial^\ell_\lambda\Big[\big(\frac{t\lambda}{\sqrt{t^2+t\lambda^2}}-\tilde{r}\big)^{-1}\Big]\leq C_\ell \big(\frac{t\lambda}{\sqrt{t^2+t\lambda^2}}-\tilde{r}\big)^{-1}\lambda^{-\ell}\leq C_\ell\lambda^{-1-\ell}.
\end{split}
\end{equation}
 Define the operator
$L=L(\lambda,\tilde{r})=(\frac{t\lambda}{\sqrt{t^2+t\lambda^2}}-\tilde{r} )^{-1}\partial_\lambda$. By using
\eqref{beans0-l} and integration by parts again, we obtain
\begin{equation*}
\begin{split}
II_1\leq&\sum_{m\geq1}\Big|\int_0^{\infty} L^{N}
\big(e^{i(\frac{\lambda}{\sqrt{t^2+t\lambda^2}}-
\tilde{r}\lambda)}\big)\Big[\lambda^{n-1}\chi_0(t^{-1/2}\lambda)a(t^{-1/2}\lambda,z,z')\varphi(\frac{\lambda}{2^m})\varphi_0(8\tilde{r}
\lambda) \Big]d\lambda\Big|
\\\leq &C_N\sum_{m\geq1}\int_{\lambda\sim
2^{m}}\lambda^{n-1-2N}d\lambda\leq C_N.
\end{split}
\end{equation*}
Finally we estimate $II_2$. Based on the size of $\partial_\lambda \Phi$, we make a further decomposition of $II_2$
\begin{equation*}
\begin{split}
II_2\leq &\Big|\int_0^{\infty} e^{i\sqrt{t^2+t\lambda^2}}e^{-
i\tilde{r}\lambda}\lambda^{n-1}\chi_0(t^{-1/2}\lambda) a(t^{-1/2}\lambda,z,z')\\ &\qquad \qquad \big(1-\varphi_0(\lambda)\big)\varphi_0(\frac{t\lambda}{\sqrt{t^2+t\lambda^2}}-\tilde{r}) \big( 1 - \varphi_0(8\tilde{r} \lambda) \big) \, d\lambda\Big|\\
&+\sum_{m\geq1}\Big|\int_0^{\infty}
e^{i\sqrt{t^2+t\lambda^2}}e^{-
i\tilde{r}\lambda}\lambda^{n-1}\chi_0(t^{-1/2}\lambda) a(t^{-1/2}\lambda,z,z')
\\&\qquad \qquad \big(1-\varphi_0(\lambda)\big)\varphi\big(\frac{\frac{t\lambda}{\sqrt{t^2+t\lambda^2}}-\tilde{r}}{2^m}\big)\big( 1 - \varphi_0(8\tilde{r} \lambda) \big) \, d\lambda\Big|\\:=&II_2^1+II_2^2.
\end{split}
\end{equation*}
Due to  the compact support of the second $\varphi_0$ factor in $II_2^1$, one has
\begin{equation}\label{II_2}
|\frac{t\lambda}{\sqrt{t^2+t\lambda^2}}-\tilde{r}|\leq 1.
\end{equation}
If $\tilde{r} \leq 10$, from $\lambda<2\sqrt{t}$ again, we must have $\lambda \leq 100$ otherwise the integrand of $II_2^1$ vanishes.
Then we see that $II_2^1$ is uniformly bounded. If $\tilde{r} \geq
10$, from \eqref{II_2} and $\lambda<2\sqrt{t}$, we have $\tilde{r}\sim\lambda$.
Hence,  by letting $\lambda'=\lambda/\sqrt{1+\lambda^2}$ and using \eqref{beans0-l} with $\alpha = 0$, it follows that
\begin{equation*}
\begin{split}
II_2^1&\le\int_{\{\lambda<2\sqrt{t}:|\frac{t\lambda}{\sqrt{t^2+t\lambda^2}}-\tilde{r}|\leq 1\}}\lambda^{n-1}(1+\tilde{r}\lambda)^{-\frac{n-1}2}d\lambda
\\&\leq C \sqrt{t}\int_{\{\lambda<2:|\frac{\lambda}{\sqrt{1+\lambda^2}}-\frac{\tilde{r}}{\sqrt{t}}|\leq 1/\sqrt{t}\}}d\lambda\\&
\leq C \sqrt{t} \int_{\{\lambda<2:|\lambda'-\frac{\tilde{r}}{\sqrt{t}}|\leq 1/\sqrt{t}\}}(1+\lambda^2)^{3/2}d\lambda'\leq C.
\end{split}
\end{equation*}
Now we consider $II_2^2$. We estimate \begin{equation*}
\begin{split}
II_2^2\leq&\sum_{m\geq1}\Big|\int_0^\infty L^{N}
\big(e^{i(\sqrt{t^2+t\lambda^2}-
\tilde{r}\lambda)}\big)\Big[\chi_0(t^{-1/2}\lambda)\lambda^{n-1}a(t^{-1/2}\lambda,z,z')\\&\qquad \qquad\big(1-\varphi_0(\lambda)\big)\varphi\big(2^{-m}(\frac{t\lambda}{\sqrt{t^2+t\lambda^2}}-\tilde{r})\big)\big( 1 - \varphi_0(8\tilde{r} \lambda) \big) \Big] \, d\lambda\Big|.
\end{split}
\end{equation*}
Let $$b(\lambda)=\lambda^{n-1}a(t^{-1/2}\lambda,z,z')\big(1-\varphi_0(\lambda)\big)\varphi(\frac{\frac{t\lambda}{\sqrt{t^2+t\lambda^2}}-\tilde{r}}{2^m})\big( 1 - \varphi_0(8\tilde{r} \lambda) \big),$$
then on the support of $b$ with $\lambda\geq 1/2$, we use \eqref{beans0-l} to obtain
$$|\partial_\lambda ^\alpha b|\leq C_{\alpha}\lambda^{n-1} (1+\tilde{r}\lambda)^{-(n-1)/2}.$$
Hence from the first inequality of \eqref{I-bp4'}, we obtain
\begin{equation*}
\begin{split}
|(L^*)^N [b(\lambda)]|&\leq C_N 2^{-mN}\lambda^{n-1} (1+\tilde{r}\lambda)^{-(n-1)/2}.
\end{split}
\end{equation*}
Therefore we use integration by parts to obtain
\begin{equation*}
\begin{split}
II_2^2 \leq C_N\sum_{m\geq1}2^{-mN}\int_{\{\lambda<2\sqrt{t},|\frac{t\lambda}{\sqrt{t^2+t\lambda^2}}-\tilde{r}|\sim 2^m\}}\lambda^{n-1}(1+\tilde{r}\lambda)^{-\frac{n-1}2}d\lambda.
\end{split}
\end{equation*}
If $\tilde{r}\leq 2^{m+1}$, since $|\frac{t\lambda}{\sqrt{t^2+t\lambda^2}}-\tilde{r}|\sim 2^m$, then $\lambda \leq 2^{m+2}$. One has
\begin{equation*}
\begin{split}
II_2^2\leq C_N\sum_{m\geq 1}2^{-mN}2^{(m+2)n}\leq C.
\end{split}
\end{equation*}
If $\tilde{r}\geq 2^{m+1}$, we
have $\lambda\sim \tilde{r}$, thus we choose $N$ large enough such that
\begin{equation*}
\begin{split}
II_2^2\leq C_N t^{1/2}  \sum_{m\geq1}2^{-mN} \int_{\{\lambda<2:|\frac{\lambda}{\sqrt{1+\lambda^2}}-\frac{\tilde{r}}{\sqrt{t}}|\sim \frac{2^m}{\sqrt{t}}\}}d\lambda 
\leq C_N\sum_{m\geq1}2^{-mN}2^{m}\leq C
\end{split}
\end{equation*}
which concludes the proof of Lemma
\ref{dispersive-l}.\end{proof}\vspace{0.2cm}

\subsubsection{High frequency estimates} We now prove the microlocalized dispersive estimates for the high energy part.

\begin{proposition}[High energy estimates]\label{prop:hDispersive} Let $\leftidx{^{\sigma}}U_{j,k}(t) $ be defined as in \eqref{U-mic}.
Then there exists a constant $C$ independent of $t, z, z'$ for all
$j\geq 0, 0\leq k,k'\leq N$ and $0\leq \theta\leq 1$ such that
 the dispersive estimate
 \begin{equation}\label{h-}
 \big\|\leftidx{^{\sigma}}U_{j,k}(t)  (\leftidx{^{\sigma}}U_{j,k'}(\tau) )^*\big\|_{L^1\rightarrow L^\infty}\leq C
2^{j[(n+1+\theta)/2-2\sigma]}(2^{-j}+|t-\tau|)^{-(n-1+\theta)/2}\end{equation}
holds in each of the following cases:

\begin{itemize}
\item \label{h-near} $(k,k') \in J_{near}$ or $(k,k')=(0,k'), (k,0)$ and $t \neq \tau$; 

\item \label{h-nonout}  $(k,k')\in J_{non-out}$ and $t<\tau$;

\item \label{h-nonin}  $(k,k')\in J_{non-inc}$  and $\tau<t$.

\end{itemize}


\end{proposition}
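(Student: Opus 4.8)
The plan is to establish the dispersive estimate \eqref{h-} by reducing, via scaling, to a fixed dyadic frequency $\lambda\sim 1$ and then splitting according to whether $|t-\tau|$ is small or large. First I would normalize: using the scaling $\lambda\mapsto 2^{-j}\lambda$ inside \eqref{U-mic} we write the Schwartz kernel of $\leftidx{^{\sigma}}U_{j,k}(t)(\leftidx{^{\sigma}}U_{j,k'}(\tau))^*$ as a frequency-localized oscillatory integral in $\lambda\sim 1$ with phase $(t-\tau)\sqrt{2^{-2j}+\lambda^2}$ together with the geometric phase coming from the spectral measure; the prefactor $2^{j(n-1)}$ from $\lambda^{n-1}dE$, the $2^{-2j\sigma}$ from the two factors $\langle\lambda\rangle^{-\sigma}$ (which at high frequency $j\geq0$ is comparable to $2^{-2j\sigma}$), and a Jacobian $2^{-j}$ combine to give the claimed power $2^{j[(n+1+\theta)/2-2\sigma]}$ once the oscillatory integral is shown to contribute $2^{-j(n-1+\theta)/2}(2^{-j}+|t-\tau|)^{-(n-1+\theta)/2}\cdot 2^{-j}$-type factors; one must keep careful track of these exponents. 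When $2^{-j}+|t-\tau|\lesssim 2^{-j}$, i.e. $|t-\tau|\lesssim 2^{-j}$, the estimate reduces to the uniform boundedness of the kernel, which follows directly from \eqref{bean}, \eqref{beans}, \eqref{beanc} (or \cite[Lemma 4.1]{ZZ2} in the non-out/non-inc cases) after integrating $\lambda^{n-1}$ over a dyadic interval.

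The substantive case is $|t-\tau|\gg 2^{-j}$. Here I would treat the three items of the proposition separately. In the $J_{near}$ case (and $k=0$ or $k'=0$), the kernel is of the form \eqref{beanQ}: a sum of two terms $e^{\pm i\lambda d(z,z')}\lambda^{n-1}a_\pm$ with $|\partial_\lambda^\alpha a_\pm|\lesssim \lambda^{-\alpha}(1+\lambda d(z,z'))^{-(n-1)/2}$, plus a rapidly-decaying term $b$. For the $b$-term one integrates by parts in $\lambda$ using $e^{i(t-\tau)\sqrt{1+\lambda^2}}$: each integration gains $|t-\tau|^{-1}$ at the cost of a harmless $\lambda^{-1}$, so one gets arbitrarily fast decay in $|t-\tau|$, which dominates the target rate at high frequency. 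For the oscillatory $a_\pm$-terms one has the combined phase $\Phi_\pm(\lambda)=(t-\tau)\sqrt{1+\lambda^2}\pm d(z,z')\lambda$ (after undoing the scaling, or in scaled form $(t-\tau)\sqrt{2^{-2j}+\lambda^2}\pm 2^{-j}d(z,z')\lambda$); the derivative is $\partial_\lambda\Phi_\pm=(t-\tau)\lambda/\sqrt{1+\lambda^2}\pm d(z,z')$. This is the classical wave/Klein-Gordon stationary phase analysis: one distinguishes the non-stationary regime (where $|d(z,z')|$ is far from $|t-\tau|\lambda/\sqrt{1+\lambda^2}$, handled by non-stationary phase / integration by parts against the operator $L=(\partial_\lambda\Phi)^{-1}\partial_\lambda$, checking inductively that $\partial_\lambda^\ell[(\partial_\lambda\Phi)^{-1}]$ has the expected size as was done in \eqref{I-bp4} and \eqref{I-bp4'}) and the stationary regime (where one invokes the van der Corput lemma / stationary phase with the second derivative $\partial_\lambda^2\Phi\sim (t-\tau)(1+\lambda^2)^{-3/2}$). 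Combining the $\lambda^{n-1}$ weight, the decay $(1+\lambda d)^{-(n-1)/2}$ of the amplitude, and the $|t-\tau|^{-1/2}$ gain from each of the degenerate directions produces the exponent $(n-1+\theta)/2$; the parameter $\theta\in[0,1]$ enters exactly as in the Euclidean Klein-Gordon dispersive estimate, interpolating between the wave rate ($\theta=0$) obtained purely from the $a_\pm$ amplitude decay and the full stationary-phase rate — one loses $2^{j\theta/2}$ in amplitude to gain $(2^{-j}+|t-\tau|)^{-\theta/2}$ in decay.

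For the $J_{non-out}$ (with $t<\tau$) and $J_{non-inc}$ (with $\tau<t$) cases I would instead use the representation \eqref{QkEQk'}, where the geometric phase $\Phi(z,z',v)$ has a fixed sign $\Phi\leq -c<0$ (resp. $\Phi\geq c>0$) and the amplitude satisfies $|(\lambda\partial_\lambda)^\alpha a|\leq C_\alpha$, with an extra $\ell$-dimensional integration over $v\in\R^\ell$ accounting for conjugate points. The key point, exactly as in the low-frequency proof above, is that since $\Phi$ and $t-\tau$ have the same sign, the full phase $(t-\tau)\sqrt{1+\lambda^2}+\lambda\Phi$ (rescaled, $(\tau-t)\sqrt{2^{-2j}+\lambda^2}+\lambda(-\Phi)$ in the non-out case) has $\lambda$-derivative bounded below: $|\partial_\lambda[(t-\tau)\sqrt{1+\lambda^2}]| + |\Phi| \gtrsim |t-\tau|\lambda/\sqrt{1+\lambda^2}+c$, with no cancellation. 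Hence repeated integration by parts in $\lambda$ (using the operator $L$ as in \eqref{l-nonout'-h'}) gains $(2^{-j}+|t-\tau|)^{-1}$ per step with no loss, after first introducing a further dyadic decomposition in the size of $(\tau-t)\lambda$ as done for \eqref{l-nonout'-h}; summing the dyadic pieces and using the extra $v$-integration (which at worst contributes bounded powers of $2^j$ already accounted for in the prefactor) gives decay faster than the required rate $(2^{-j}+|t-\tau|)^{-(n-1+\theta)/2}$ for any $\theta\le1$, with the correct power of $2^j$ emerging from the scaling bookkeeping. The main obstacle, as usual, is the stationary-phase analysis in the $J_{near}$ case: one must handle the genuine degeneracy of $\partial_\lambda^2\Phi$ and carefully interpolate in $\theta$ while tracking all the dyadic weights, whereas the non-out/non-inc cases are ``easy'' precisely because the sign condition rules out stationary points. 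I would organize the $J_{near}$ argument as a self-contained high-frequency companion to Lemma \ref{dispersive-l}, mirroring its structure (splitting $II^+$ vs $II^-$, and within $II^-$ the near/far decomposition in $\tilde r$) but now with the genuine stationary-phase contribution retained so as to capture the $\theta$-dependent rate.
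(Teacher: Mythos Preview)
Your proposal is correct and follows essentially the same route as the paper: the $J_{near}$ case (and $k$ or $k'=0$) is reduced to a high-frequency stationary-phase lemma (the paper's Lemma~\ref{dispersive-h}) which handles the $b$-term by repeated integration by parts and the $a_\pm$-terms by van der Corput after splitting on the size of $d(z,z')/|t-\tau|$, while the $J_{non-out}$/$J_{non-inc}$ cases are dispatched by exploiting the sign condition on $\Phi$ to make the total phase non-stationary. One minor simplification relative to what you sketch: at high frequency the cutoff $\varphi(2^{-j}\lambda)$ already confines $\lambda$ to a single dyadic shell, so the extra dyadic decomposition in $(\tau-t)\lambda$ you borrow from the low-frequency argument \eqref{l-nonout'-h} is unnecessary---the paper simply integrates by parts $N$ times directly (via the bound \eqref{I-bp0}) to obtain $2^{j(n+\ell/2-2\sigma)}\big(2^j(|t-\tau|+c)\big)^{-N}$ and then takes $N$ large.
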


\begin{remark} The dispersive inequalities \eqref{h-} in the last two cases are only needed to prove the double-endpoint inhomogeneous Strichartz estimate (and thus the endpoint); see Section \ref{sec:d-e-in}. 
\end{remark}

\begin{proof} The proof is similar to the proof of Proposition
\ref{prop:lDispersive}, but we need some modifications.

We first consider \eqref{h-} in the second and third cases. We only prove  \eqref{h-} in the third case (when $(k,k')\in J_{non-inc}$  and $\tau<t$) as the argument to prove \eqref{h-} in the
second case is analogous.  By \cite[Lemma 5.3]{HZ}, $\leftidx{^{\sigma}}U_{j,k}(t)  (\leftidx{^{\sigma}}U_{j,k'}(\tau) )^*$ is given by
\begin{equation}
\int_0^\infty e^{i(t-\tau){\sqrt{1+\lambda^2}}} \phi(2^{-j}\lambda)\langle \lambda\rangle^{-2\sigma}\big(Q_k(\lambda)
dE_{\sqrt{\Delta_g}}(\lambda)Q^*_{k'}(\lambda)\big)(z,z'),\quad \phi=\varphi^2.
\label{UiUjint}\end{equation} Then, under the assumption that $Q_k$ is not incoming-related to $Q_{k'}$, and $\tau<t$, we need to show that for $j\geq 0$ 
\begin{equation}\label{h-nonin'}
\begin{split}
&\Big|\int_0^\infty e^{i(t-\tau){\sqrt{1+\lambda^2}}} \phi(2^{-j}\lambda)\langle \lambda\rangle^{-2\sigma}\big(Q_k(\lambda)
dE_{\sqrt{\Delta_g}}(\lambda)Q^*_{k'}(\lambda)\big)(z,z')\Big|\\&\leq C 2^{j[(n+1+\theta)/2-2\sigma]}(2^{-j}+|t-\tau|)^{-(n-1+\theta)/2}.
\end{split}
\end{equation}
For the sake of simplicity, from \cite[Lemma 4.1, (4.4)-(4.6)]{ZZ2}, we only consider 
\begin{equation*}
\begin{split}
\int_0^\infty e^{i(t-\tau)\sqrt{1+\lambda^2}}\phi(2^{-j}\lambda)\langle \lambda\rangle^{-2\sigma}\int_{\R^\ell} e^{i\lambda\Phi(z,z',v)}\lambda^{n-1+\frac \ell 2}a(\lambda,z,z',v) dv d\lambda
\end{split}
\end{equation*}
where $\Phi(z,z',v)\geq c>0$ (due to the fact that $Q_k$
is not incoming-related to $Q_{k'}$), $0\leq \ell \leq n-1$ and $a$ is a smooth function such that 
$|(\lambda\partial_\lambda)^\alpha a|\leq C_\alpha $ (due to the fact that $a$ is compactly supported  in $v$). Let $\tilde{\Phi}(t,\tau;\lambda)=\frac{\Phi}{t-\tau}+\frac{2\lambda}{\sqrt{1+\lambda^2}}$ and define the operator 
$$
L = \frac1 i\Big(\Phi+\frac{2\lambda(t-\tau)}{\sqrt{1+\lambda^2}}\Big)^{-1}\partial_\lambda=\frac1i\left((t-\tau)\tilde{\Phi}(t,\tau;\lambda)\right)^{-1}\frac{\partial}{\partial \lambda},
$$
then $L^N(e^{i(t-\tau){\sqrt{1+\lambda^2}}+i\lambda \Phi(z,z',v)} )=e^{i(t-\tau){\sqrt{1+\lambda^2}}+i\lambda \Phi(z,z',v)} $. Let $L^*$ be its adjoint operator; integrating by parts yields
 \begin{equation*}
\begin{split}
&\Big|\int_0^\infty e^{i(t-\tau){\sqrt{1+\lambda^2}}} \phi(2^{-j}\lambda)\langle \lambda\rangle^{-2\sigma}\int_{\R^\ell} e^{i\lambda\Phi(z,z',v)}\lambda^{n-1+\frac \ell 2}a(\lambda,z,z',v) dv d\lambda\Big|
\\&\lesssim\int_{\R^\ell} \int_0^\infty \Big|(L^*)^N\left( \phi(2^{-j}\lambda)\langle \lambda\rangle^{-2\sigma} \lambda^{n-1+\frac \ell2}a(\lambda,z,z',v) \right) \Big| d\lambda dv.
\end{split}
\end{equation*}
Next, we claim that for any function $b(\lambda)$ which satisfies $|\partial_\lambda^\alpha b(\lambda)|\leq \lambda^{m-\alpha}$ (where  $\alpha\geq0$)
it holds that for any $N\geq 0$ and $t>\tau$, we have
\begin{equation}\label{I-bp0}
|(L^*)^N [b(\lambda)]|\leq C \lambda^{m-N} |(t-\tau)+c|^{-N},\quad \lambda\geq 1, c>0. 
\end{equation}
Indeed, we use the expression for  $\tilde{\Phi}(t,\tau;\lambda)$, an induction argument and the Leibniz rule to obtain:
\begin{equation*}
|(L^*)^N [b(\lambda)]|\leq C \lambda^{m-N} \sum_{j=0}^N\frac{(t-\tau)^j}{\big|(t-\tau)\tilde{\Phi}\big|^{N+j}}.
\end{equation*}
The expression  \eqref{I-bp0} follows due to $t-\tau>0$, $\Phi(z,z',v)\geq c>0$ and $\lambda\geq 1$.

Now, since $a$ is compactly supported in $v$, $t-\tau>0$ and $\Phi(z,z',v)\geq c>0$, we can apply \eqref{I-bp0} to obtain
 \begin{equation*}
\begin{split}
&\int_{\R^\ell} \int_0^\infty \Big|(L^*)^N\left( \phi(2^{-j}\lambda)\langle \lambda\rangle^{-2\sigma} \lambda^{n-1+\frac \ell2}a(\lambda,z,z',v) \right) \Big| d\lambda dv\\
&\lesssim|(t-\tau)+c|^{-N} \int_{2^{j-1}}^{2^{j+1}}  \lambda^{n-1+\frac \ell2-2\sigma-N} d\lambda 
\\&\lesssim 2^{j(n+\frac \ell2-2\sigma)}(2^j(|t-\tau|+c))^{-N}
\end{split}
\end{equation*}
which implies \eqref{h-nonin'} by choosing $N$ large enough.\vspace{0.2cm}

Finally, the proof of \eqref{h-} when the first condition holds  follows from Lemma  \ref{dispersive-h} below. 
\end{proof}

\begin{lemma}[Microlocalized dispersive estimates for high frequencies]\label{dispersive-h}
Suppose $(k,k')\in \{ 0, 1, \dots, N \}^2$ such that 
$Q_k(\lambda) dE_{\sqrt{\Delta_g}} Q_{k'}(\lambda)^*$ satisfies the conclusions of Proposition~\ref{prop:localized spectral measure}. Then for all integers
$j\geq0$ and for all $0\leq \theta\leq 1$, there exists a constant $C$ independent of $j$ and points $z,z'\in X$ such that 
\begin{equation}\label{disper}
\begin{split}
\Big|\int_0^\infty e^{it\sqrt{1+\lambda^2}} \phi(2^{-j}\lambda) \langle\lambda\rangle^{-2\sigma} \big(Q_k(\lambda)
&dE_{\sqrt{\Delta_g}}(\lambda)Q_{k'}^*(\lambda)\big)(z,z')
\Big|\\ &\leq C 2^{j[(n+1+\theta)/2-2\sigma]}\left(2^{-j}+|t|\right)^{-(n-1+\theta)/2}
\end{split}
\end{equation}
where $\phi\in\mathcal{C}_c^\infty ([\frac12,2])$.
\end{lemma}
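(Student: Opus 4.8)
The plan is to prove Lemma~\ref{dispersive-h} by a dyadic decomposition in the ratio of $|t|$ to the geodesic distance $r=d(z,z')$, together with stationary phase in the regime where the Klein--Gordon phase has a nondegenerate critical point, and repeated integration by parts in the complementary (nonstationary) regimes. Write the spectral measure kernel via Proposition~\ref{prop:localized spectral measure} as $\lambda^{n-1}\big(\sum_{\pm}e^{\pm i\lambda r}a_\pm(\lambda,z,z')+b(\lambda,z,z')\big)$, so that the left side of \eqref{disper} becomes a sum of oscillatory integrals with phase $t\sqrt{1+\lambda^2}\pm r\lambda$ (plus the rapidly decaying $b$-term, treated as in \eqref{4.3} and presenting no difficulty). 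Rescale $\lambda=2^j\mu$ so the amplitude $\phi$ is supported where $\mu\sim1$; the high-frequency feature is that $\langle\lambda\rangle^{-2\sigma}\sim 2^{-2j\sigma}$ there and $\sqrt{1+\lambda^2}=\lambda\sqrt{1+\lambda^{-2}}$ is, at frequency $2^j$, a smooth perturbation of the wave phase $\lambda$ whose $\lambda$-derivatives behave like those of $\lambda$ itself. This is precisely why one expects the wave-type bound $2^{j(n+1+\theta)/2}(2^{-j}+|t|)^{-(n-1+\theta)/2}$ rather than the Schr\"odinger rate.

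The core of the argument is the case $2^j|t|\gtrsim 1$ (when $2^j|t|\lesssim1$ one simply bounds the integral by $\int_{\lambda\sim2^j}\lambda^{n-1}\langle\lambda\rangle^{-2\sigma}\,d\lambda\lesssim 2^{j(n-2\sigma)}\sim 2^{j[(n+1+\theta)/2-2\sigma]}(2^{-j})^{-(n-1+\theta)/2}\cdot 2^{-j(n-1+\theta)/2+j(n-1+\theta)/2}$, i.e. the claimed bound with $2^{-j}$ dominating). For the $+$ phase $t\sqrt{1+\lambda^2}+r\lambda$, the derivative $\tfrac{t\lambda}{\sqrt{1+\lambda^2}}+r$ has no zero for $t>0$ (or for $t<0$ one is in the $-$ situation), is bounded below by a constant multiple of $2^j|t|$ on the support, and has well-controlled derivatives by an induction exactly like \eqref{I-bp4}; integrating by parts $N$ times gains $(2^j|t|)^{-N}$ and yields rapid decay, better than required. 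For the $-$ phase $t\sqrt{1+\lambda^2}-r\lambda$ one splits according to the size of $\partial_\lambda\Phi=\tfrac{t\lambda}{\sqrt{1+\lambda^2}}-r$: on the region where $|\partial_\lambda\Phi|\gtrsim 2^j|t|$ (which includes $r\lesssim 2^j|t|$ or $r\gg 2^j|t|$) one integrates by parts as before; the remaining region is a neighborhood of the stationary point, where $r\sim 2^j|t|$, $\partial_\lambda^2\Phi=t(1+\lambda^2)^{-3/2}\neq0$ with $|\partial_\lambda^2\Phi|\sim |t|2^{-3j}$, and stationary phase in the one $\lambda$-variable costs $|t\cdot 2^{-3j}|^{-1/2}$. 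Combining this $\tfrac12$-power of decay with the decay $(1+\lambda r)^{-(n-1)/2}\sim (2^{2j}|t|)^{-(n-1)/2}$ coming from \eqref{bean}, and carefully tracking the powers of $2^j$ from the Jacobian of the rescaling ($2^{jn}$ from $\lambda^{n-1}d\lambda$) and from $\langle\lambda\rangle^{-2\sigma}$, produces the exponent $(n-1+\theta)/2$ with $\theta$ interpolating between the endpoint $\theta=0$ (pure stationary phase, $(n-1)/2$ decay) and $\theta=1$; the fractional $\theta$ is obtained by interpolating the $\theta=0$ and $\theta=1$ kernel bounds, or equivalently by using the $(1+\lambda r)^{-(n-1)/2}$ weight only to fractional power.

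The main obstacle I anticipate is bookkeeping the two competing scales $2^j$ and $|t|$ uniformly in $j\geq0$ and uniformly in $z,z'$: one must show that the implicit constants in the integration-by-parts estimates and in the stationary-phase expansion do not degenerate as $2^j|t|\to\infty$ or as $r$ ranges over all of $(0,\infty)$, and in particular that on the support of $\phi(2^{-j}\lambda)$ (hence $\lambda\sim 2^j$, so $\lambda$ can be large) the symbol estimates $|\partial_\lambda^\ell[(\tfrac{t\lambda}{\sqrt{1+\lambda^2}}-r)^{-1}]|\lesssim (\ldots)^{-1}\lambda^{-\ell}$ hold with constants independent of $j$. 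A secondary technical point is that the amplitude $a_\pm(2^{-j}\cdot,z,z')$ after rescaling has $\mu$-derivatives controlled with constants independent of $j$ (this follows from \eqref{bean} with the $\lambda^{-\alpha}$ scaling, so each $\mu$-derivative is harmless), and that in the stationary-phase region the weight $(1+\lambda r)^{-(n-1)/2}$ is essentially constant on the scale of the critical point so it may be pulled out. Once these uniformities are in hand, the estimate follows by summing the (finitely many, or geometrically decaying) dyadic pieces in the size of $\partial_\lambda\Phi$, exactly in the spirit of the decomposition of $II^-$ in the proof of Lemma~\ref{dispersive-l} above, but now with the extra gain $2^{-2j\sigma}$ from $\langle\lambda\rangle^{-2\sigma}$ and the wave-type, rather than Schr\"odinger-type, counting of powers.
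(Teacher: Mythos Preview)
Your overall strategy is exactly the paper's: treat $2^j|t|\lesssim 1$ trivially, rescale to unit frequency, dispose of the $b$-term by integration by parts, handle the nonstationary regimes of the $a_\pm$-terms by repeated integration by parts, treat the stationary regime of the $a_-$-term via a second-derivative estimate (the paper uses Van der Corput rather than stationary phase, a cosmetic difference), and finally interpolate between the $\theta=0$ bound (the rough estimate using only the amplitude decay $(1+\lambda r)^{-(n-1)/2}$) and the $\theta=1$ bound (the one coming from the second derivative).

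There is, however, a concrete bookkeeping error in your identification of the stationary region. For $\lambda\sim 2^j$ with $j\geq 0$ one has $\lambda/\sqrt{1+\lambda^2}\in[1/\sqrt{5},1)$, so
\[
\partial_\lambda\Phi=\frac{t\lambda}{\sqrt{1+\lambda^2}}-r
\]
vanishes when $r\sim |t|$, \emph{not} when $r\sim 2^j|t|$. Correspondingly, on the nonstationary region ($r\ll|t|$ or $r\gg|t|$) one has $|\partial_\lambda\Phi|\gtrsim|t|$, not $\gtrsim 2^j|t|$; the gain $(2^j|t|)^{-1}$ per integration by parts comes from combining $|\partial_\lambda\Phi|^{-1}\sim|t|^{-1}$ with the fact that each $\partial_\lambda$ on the amplitude costs $2^{-j}$. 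On the genuine stationary region $r\sim|t|$ the amplitude weight is
\[
(1+\lambda r)^{-\frac{n-1}{2}}\sim(2^j|t|)^{-\frac{n-1}{2}},
\]
not $(2^{2j}|t|)^{-(n-1)/2}$. With this correction the stationary contribution is
\[
|\partial_\lambda^2\Phi|^{-1/2}\cdot\lambda^{n-1}\langle\lambda\rangle^{-2\sigma}(1+\lambda r)^{-\frac{n-1}{2}}
\sim 2^{3j/2}|t|^{-1/2}\cdot 2^{j(n-1)-2j\sigma}\cdot(2^j|t|)^{-\frac{n-1}{2}}
= 2^{j[(n+2)/2-2\sigma]}|t|^{-n/2},
\]
which is exactly the $\theta=1$ bound. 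Your version with $(2^{2j}|t|)^{-(n-1)/2}$ would give $2^{j[3/2-2\sigma]}|t|^{-n/2}$, which is stronger than the truth and not provable (the actual critical point sits at $r\sim|t|$, where the amplitude is only as small as $(2^j|t|)^{-(n-1)/2}$). Once this is corrected, your outline coincides with the paper's proof.
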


\begin{proof}
Let $h=2^{-j}\leq 1$. From Proposition
\ref{prop:localized spectral measure}, we see
\begin{equation*}
\begin{split}
\Big|Q_k(\lambda)
dE_{\sqrt{\Delta_g}}(\lambda)Q_{k'}^*(\lambda)
\Big|\leq C \lambda^{n-1},
\end{split}
\end{equation*}
which directly implies  \eqref{disper}  if $|t|\leq h$.
From now on, we assume $|t|\geq h=2^{-j}$. By the scaling, this is a directly consequence of
\begin{equation}\label{disper'}
\begin{split}
\Big|\int_0^\infty e^{it\sqrt{h^2+\lambda^2}/h} \phi(\lambda)\langle\lambda/h\rangle^{-2\sigma}  &\big(Q_k
dE_{\sqrt{\Delta_g}}Q_{k'}^*\big)(\lambda/h, z,z')
\Big|\\&\leq C h^{2\sigma-(n-1)}(|t|/h)^{-\frac{n-1}2}(1+h|t|)^{-1/2}.
\end{split}
\end{equation}
Indeed if we could prove  \eqref{disper'}, then for $0\leq\theta\leq1$, we have 
\begin{equation*}
\begin{split}
&\Big|\int_0^\infty e^{it\sqrt{1+\lambda^2}} \phi(2^{-j}\lambda) \langle\lambda\rangle^{-2\sigma} \big(Q_k(\lambda)
dE_{\sqrt{\Delta_g}}(\lambda)Q_{k'}^*(\lambda)\big)(z,z')
\Big|\\ &\leq C 2^{j[(n+1)/2-2\sigma]}|t|^{-(n-1)/2}\left(1+2^{-j}|t|\right)^{-1/2}\\&\leq C 2^{j[(n+1+\theta)/2-2\sigma]}(2^{-j}+|t|)^{-(n-1+\theta)/2}(2^{-j}|t|)^{\frac{\theta}2}\left(1+2^{-j}|t|\right)^{-1/2}
\end{split}
\end{equation*}
which implies \eqref{disper}. Now we prove \eqref{disper'}.  Let $r=d(z,z')$, by using Proposition
\ref{prop:localized spectral measure}, we write
\begin{equation}\label{4.4}
\begin{split}
&\Big|\int_0^\infty e^{it\sqrt{h^2+\lambda^2}/h} \phi(\lambda) \langle\lambda\rangle^{-2\sigma} \big(Q_k
dE_{\sqrt{\Delta_g}}Q_{k'}^*\big)(\lambda/h, z,z')
d\lambda\\
&=\sum_\pm \int_0^\infty e^{it\sqrt{h^2+\lambda^2}/h}e^{\pm
ir\lambda/h}\phi(\lambda)\langle\lambda/h\rangle^{-2\sigma} (\lambda/h)^{n-1}a_\pm(\lambda/h,z,z')d\lambda\\&\qquad\qquad+\int_0^\infty
e^{it\sqrt{h^2+\lambda^2}/h}\phi(\lambda)(\lambda/h)^{n-1}\langle\lambda/h\rangle^{-2\sigma} b(\lambda/h,z,z')d\lambda
\end{split}
\end{equation}
where $a_\pm$ satisfies 
\begin{equation*}
\big|\partial_\lambda^\alpha a_\pm(\lambda,z,z') \big|\leq C_\alpha
\lambda^{-\alpha}(1+\lambda d(z,z'))^{-\frac{n-1}2},
\end{equation*}
and therefore
\begin{equation}\label{beans0-h}
\Big|\partial_\lambda^\alpha \big(a_\pm(h^{-1}\lambda,z,z')\big)
\Big|\leq C_\alpha \lambda^{-\alpha}(1+h^{-1}\lambda
r)^{-\frac{n-1}2}.
\end{equation}
We first use \eqref{beans} to obtain
\begin{equation}\label{est-b}
\begin{split}
\Big|\big(\frac{d}{d\lambda}\big)^{N}\big(\phi(\lambda)(\lambda/h)^{n-1}\langle\lambda/h\rangle^{-2\sigma} & b(\lambda/h,z,z')\big)\Big|\\&\leq
C_N(\lambda/h)^{n-1-2\sigma}\lambda^{-N},\quad \forall N\in\mathbb{N}.
\end{split}
\end{equation}
Let $\delta$ be a small constant to be chosen later. Recall that  $\varphi\in
C_c^\infty([\frac12,2])$ and $\varphi_0(\lambda)=\sum_{m\leq 0}\varphi(2^{-m}\lambda)$ as in \eqref{dp}. Since $\phi=\varphi^2$, then $\text{supp}( \phi)\subset [1/2,2]$, thus $\langle\lambda/h\rangle^{-2\sigma} \lesssim h^{2\sigma}$ when $\lambda$ is on
the support of $\phi$. Then by using \eqref{est-b}, we have
\begin{equation*}
\begin{split}
\Big|\int_0^\infty e^{it\sqrt{h^2+\lambda^2}/h} \phi(\lambda)(\lambda/h)^{n-1}\langle\lambda/h\rangle^{-2\sigma} &b(\lambda/h,z,z')
\varphi_0(\frac{\lambda}{\delta})d\lambda\Big|\\&\leq
C h^{2\sigma}\int_0^{\delta}(\lambda/h)^{n-1} d\lambda\leq C h^{2\sigma+1}(\delta/h)^{n}.
\end{split}
\end{equation*}
We use \eqref{beans} and $N$-times  integration by parts to obtain
\begin{equation*}
\begin{split}
&\Big|\int_0^\infty e^{it\sqrt{h^2+\lambda^2}/h}
\sum_{m\geq1}\varphi(\frac{\lambda}{2^{m}\delta})\phi(\lambda)(\lambda/h)^{n-1}\langle\lambda/h\rangle^{-2\sigma} b(\lambda/h,z,z')
d\lambda\Big|\\
&\leq \sum_{m\geq 1}\Big|\int_0^\infty
\big(\frac{h\sqrt{h^2+\lambda^2}}{\lambda
t}\frac\partial{\partial\lambda}\big)^{N}\big(e^{it\sqrt{h^2+\lambda^2}/h}\big)
\varphi(\frac{\lambda}{2^m\delta})\phi(\lambda)(\lambda/h)^{n-1}\langle\lambda/h\rangle^{-2\sigma} b(\lambda/h,z,z')
d\lambda\Big|\\& \leq
C_N(|t|/h)^{-N} h^{2\sigma-(n-1)}\sum_{m\geq1}\int_{2^{m-1}\delta}^{2^{m+1}\delta}\lambda^{n-1-2N} d\lambda\leq
C_N(|t|/h)^{-N}h^{2\sigma-(n-1)}\delta^{n-2N}.
\end{split}
\end{equation*}

Choosing $\delta=(|t|/h)^{-\frac12}$ and noting $|t|\geq h$,  thus we have proved 
\begin{equation*}
\begin{split}
&\Big|\int_0^\infty e^{it\sqrt{h^2+\lambda^2}/h} \phi(\lambda)(\lambda/h)^{n-1}b(\lambda/h,z,z')
d\lambda\Big|\\&\leq C h^{2\sigma+1} (h|t|)^{-\frac{n}2}\leq C h^{2\sigma}(h|t|)^{-\frac{n-1}2}(h^{-1}|t|)^{-1/2}\\&\leq C h^{2\sigma}(|t|h)^{-\frac{n-1}2}(1+h|t|)^{-1/2}.
\end{split}
\end{equation*}
which implies \eqref{disper'}.

Next we consider the terms with $a_\pm$ in \eqref{4.4}. Without loss of generality, we consider $t\gg h$. Let $\Phi_{\pm}(\lambda, h, r,t)=\sqrt{h^2+\lambda^2}\pm \frac{\lambda r}{t}$,  
it suffices to show that there exists a constant $C$ independent of $r, t$ and $h$ such that
\begin{equation}\label{disper''}
\begin{split}
|I_h^\pm(t,r)|\leq C h^{2\sigma} (|t|/h)^{-\frac{n-1}2}(1+h|t|)^{-1/2}
\end{split}
\end{equation}
where
\begin{equation*}
\begin{split}
I_h^\pm(t,r):=\int_0^\infty e^{i\frac{t}{h}\Phi_\pm(\lambda, h, r, t)}\phi(\lambda)\lambda^{n-1}\langle\lambda/h\rangle^{-2\sigma} a_\pm(\lambda/h,z,z')d\lambda.
\end{split}
\end{equation*}
If $r<t/4$ or $r>2t$, a simple computation gives
$$|\partial_\lambda \Phi_\pm(\lambda, h, r, t)|=\left|\frac{\lambda}{\sqrt{h^2+\lambda^2}}\pm\frac r t\right|\geq 1/4. $$

Now, let $L=(\frac {it}h\partial_\lambda\Phi)^{-1}\partial_\lambda$ and let $L^*$ be its adjoint operator. Suppose 
that $b(\lambda)$ satisfies $|\partial_\lambda^\alpha b(\lambda)|\leq \lambda^{m-\alpha}$ (where $\alpha\geq0$).
Then we claim that for any $N\geq 0$
\begin{equation}\label{I-bp1}
|(L^*)^N [b(\lambda)]|\leq C \lambda^{m-N} \sum_{j=0}^N\frac{(t/h)^j}{\left|\frac {it}h\partial_\lambda\Phi\right|^{N+j}}.
\end{equation}
Indeed, as in the proof of \eqref{I-bp0}, this is a consequence of the Leibniz rule and an induction argument. Returning to $I_h^\pm$, an integration by parts argument combined with \eqref{I-bp1},  for $r<\frac t4$ or $r>2t$, leads to
\begin{equation*}
\begin{split}
|I_h^\pm(t,r)|\leq C h^{2\sigma}(|t|/h)^{-N},\quad \forall N\geq 0
\end{split}
\end{equation*}
which implies \eqref{disper''} since $t\geq h$ and $h\leq 1$. Therefore we only consider  the case $t\sim r$.
To consider $I_h^+(t,r)$,
we first note that 
$$|\partial_\lambda \Phi_+(\lambda, h, r, t)|=\left|\frac{\lambda}{\sqrt{h^2+\lambda^2}}+\frac r t\right|\geq 1/2. $$
By using the same stationary phase argument as above, we also obtain
\begin{equation*}
\begin{split}
|I_h^+(t,r)|\leq C h^{2\sigma} (|t|/h)^{-N},\quad \forall N\geq 0,
\end{split}
\end{equation*}
which implies \eqref{disper''} since $t\geq h$ and $h\leq 1$.
To estimate $I_h^-(t,r)$, we need the following Van der Corput lemma, see \cite[Proposition 2, Page 332]{Stein}.
\begin{lemma*}[Van der Corput]\label{van-der} Let $\phi$ be real-valued and smooth in $(a,b)$, and that $|\phi^{(k)}(x)|\geq1$ for all $x\in (a,b)$. Then
\begin{equation}
\left|\int_a^b e^{i\lambda\phi(x)}\psi(x)dx\right|\leq c_k\lambda^{-1/k}\left(|\psi(b)|+\int_a^b|\psi'(x)|dx\right)
\end{equation}
holds when (i) $k\geq2$ or (ii) $k=1$ and $\phi'(x)$ is monotonic. Here $c_k$ is a constant depending only on $k$.
\end{lemma*}

For $h\leq 1$ and $\lambda\sim 1$, one can check that 
$$|\partial^2_\lambda \Phi_-(\lambda, h, r, t)|=\left|\frac{h^2}{\sqrt{h^2+\lambda^2}}\right|\geq \frac{h^2}{100}. $$
By using the Van der Corput Lemma  with $\lambda=th$ and $r\sim t$, we show
\begin{equation*}
\begin{split}
|I_h^-(t,r)|&\leq C (|t|h)^{-1/2}\int_0^\infty \left|\frac{d}{d\lambda}\left(\phi(\lambda)\lambda^{n-1}\langle\lambda/h\rangle^{-2\sigma} a_{-}(\lambda/h,z,z')\right)\right| d\lambda\\
&\leq Ch^{2\sigma} (|t|h)^{-1/2}\int_{1/2}^2\lambda^{n-2} (1+\lambda r/h)^{-\frac{n-1}2} d\lambda\\&\leq Ch^{2\sigma}(|t|h)^{-1/2}(|t|/h)^{-\frac{n-1}2}.
\end{split}
\end{equation*}
On the other hand, since $t\sim r$, a rough estimate gives
\begin{equation}\label{rough}
\begin{split}
|I_h^-(t,r)|\leq \int_0^\infty \phi(\lambda)\lambda^{n-1}\langle\lambda/h\rangle^{-2\sigma}(1+\lambda r/h)^{-(n-1)/2} d\lambda \leq C h^{2\sigma}(|t|/h)^{-\frac{n-1}2}.
\end{split}
\end{equation}
Thus we have proved
 $$|I_h^-(t,r)|\lesssim h^{2\sigma}\min\{1, (|t|h)^{-1/2}\}(|t|/h)^{-\frac{n-1}2},
$$ 
which implies \eqref{disper''}.
\end{proof}

\section{Strichartz estimates for the free flow }\label{sec:4} 

In this section, we prove the Strichartz estimates \eqref{stri} in Theorem \ref{thm:Strichartz} when $V=0$.

\subsection{Abstract Strichartz estimates} 
To obtain the Strichartz estimates, we need a
variant of Keel-Tao's abstract Strichartz estimates. We thus start by recalling the ``abstract" Strichartz
estimates in the Lorentz space $L^{r,2}$.

\begin{proposition}\label{prop:semi}
Let $(X,\mathcal{M},\mu)$ be a finite measure space and
$U: \mathbb{R}\rightarrow B(L^2(X,\mathcal{M},\mu))$ be a weakly
measurable map satisfying, for some constants $C$, $\alpha,\gamma\geq0$ and
$\beta, h>0$,
\begin{equation}\label{md}
\begin{split}
\|U(t)\|_{L^2\rightarrow L^2}&\leq Ch^{\gamma},\quad t\in \mathbb{R},\\
\|U(t)U(\tau)^*f\|_{L^\infty}&\leq
Ch^{2\gamma} h^{-\alpha}(h+|t-\tau|)^{-\beta}\|f\|_{L^1}.
\end{split}
\end{equation}
Then for every pair $q,r\in[1,\infty]$ such that $(q,r,\beta)\neq
(2,\infty,1)$ and
\begin{equation*}
\frac{1}{q}+\frac{\beta}{r}\leq\frac\beta 2,\quad q\ge2,
\end{equation*}
there exists a constant $\tilde{C}$ depending only on $C$, $\beta$,
$q$ and $r$ such that
\begin{equation}\label{s-stri}
\Big(\int_{\R}\|U(t) u_0\|_{L^{r,2}}^q dt\Big)^{\frac1q}\leq \tilde{C}
\Lambda(h)\|u_0\|_{L^2}
\end{equation}
where $\Lambda(h)=h^{-(\alpha+\beta)(\frac12-\frac1r)+\frac1q+\gamma}$.
\end{proposition}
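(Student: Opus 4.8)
The plan is to reduce Proposition \ref{prop:semi} to the classical abstract Strichartz machinery of Keel--Tao \cite{KT}, with the only new twist being the extra parameter $h$ (which amounts to a rescaling) and the use of Lorentz spaces $L^{r,2}$ on the right side of the $TT^*$ argument (which is needed to handle the endpoint $q=2$ with $r<\infty$). First I would reduce to the case $h=1$: given $U(t)$ satisfying \eqref{md}, rescale time by setting $V(t)=h^{-\gamma}U(ht)$, so that $\|V(t)\|_{L^2\to L^2}\le C$ and $\|V(t)V(\tau)^*f\|_{L^\infty}\le C h^{-\alpha}(1+|t-\tau|)^{-\beta}\|f\|_{L^1}$. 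The factor $h^{-\alpha}$ is a harmless constant for fixed $h$, but one must track its powers; after running the $h=1$ estimate and undoing the substitution $t\mapsto t/h$ (which contributes $h^{1/q}$ from the $L^q_t$ norm and $h^{\gamma}$ from the $L^2\to L^2$ bound already removed), one collects exactly the exponents appearing in $\Lambda(h)=h^{-(\alpha+\beta)(\frac12-\frac1r)+\frac1q+\gamma}$. So the real content is the $h=1$ statement.

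For the $h=1$ statement I would follow the $TT^*$ method. Set $T=U(\cdot)$ acting $L^2_x\to L^q_tL^{r,2}_x$; then $\|Tu_0\|_{L^q_tL^{r,2}_x}\lesssim\|u_0\|_{L^2}$ is equivalent to the bilinear estimate
\begin{equation*}
\Big|\int\!\!\int \langle U(t)U(\tau)^*F(\tau),G(t)\rangle\,dt\,d\tau\Big|\lesssim \|F\|_{L^{q'}_tL^{r',2}_x}\|G\|_{L^{q'}_tL^{r',2}_x}.
\end{equation*}
The two inputs are: the $L^2$-boundedness \eqref{md}, which via interpolation with the dispersive bound and duality between $L^{r,2}$ and $L^{r',2}$ gives $\|U(t)U(\tau)^*\|_{L^{r',2}\to L^{r,2}}\lesssim |t-\tau|^{-\beta(1-2/r)}$ for $t\neq\tau$; and then one applies the Hardy--Littlewood--Sobolev inequality in the time variable when the scaling $\frac1q+\frac\beta r=\frac\beta2$ holds with strict inequality $q>2$, which is the non-endpoint range. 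For the endpoint $q=2$ (with $r<\infty$, i.e.\ the excluded triple $(2,\infty,1)$ is exactly the forbidden one), one uses the Keel--Tao argument: dyadically decompose in $|t-\tau|\sim 2^\ell$, prove an off-diagonal bilinear bound with a small power gain on each piece by interpolating between the $L^2\times L^2\to$ trivial bound and the $L^{r',2}\times L^{r',2}$ dispersive bound, and sum the geometric series using the atomic/real-interpolation structure of $L^{r,2}$. The use of the Lorentz space $L^{r,2}$ (rather than $L^r$) is precisely what makes the bilinear summation converge at $q=2$.

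The main obstacle is the endpoint $q=2$: one cannot simply cite Keel--Tao \cite{KT} verbatim because their theorem is stated for $L^r$ targets with the extra hypothesis $2\beta/r<1$ strict, whereas here the admissible set \eqref{adm} in Definition \ref{ad-pair} is allowed to touch the line and we want the full range including $\frac2q+\frac\beta r=\frac\beta2$ at $q=2$; hence one works throughout with $L^{r,2}$ and invokes the bilinear real-interpolation lemma (as in \cite{KT}, \S 6, the ``$\widetilde{X}$'' argument) to recover the endpoint. One should also double-check that the single forbidden case is indeed $(q,r,\beta)=(2,\infty,1)$ and that it is the only obstruction; this matches the exclusion $(q,r,n,\theta)\ne(2,\infty,3,0)$ in \eqref{adm} once $\beta=(n-1+\theta)/2$. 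Everything else (measurability of $U$, the finite-measure-space hypothesis which here only serves to make the Lorentz spaces behave well) is routine.
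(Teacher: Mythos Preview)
Your approach is essentially the same as the paper's: the paper's proof just cites \cite[Proposition 4.2]{ZZ2} (the case $\gamma=0$, itself proved by following Keel--Tao \cite{KT}) and remarks that $\gamma>0$ only changes the interpolation constant, which is exactly your scaling-plus-$TT^*$ reduction. Two small corrections: after your rescaling $V(t)=h^{-\gamma}U(ht)$ the dispersive constant is $Ch^{-(\alpha+\beta)}$, not $Ch^{-\alpha}$ (you dropped the $h^{-\beta}$ coming from $(h+h|t-\tau|)^{-\beta}=h^{-\beta}(1+|t-\tau|)^{-\beta}$), though your final $\Lambda(h)$ is correct; and the endpoint $q=2$ in $L^r$ is already contained in Keel--Tao's main theorem, so the passage to $L^{r,2}$ is a refinement (noted in \cite[\S6 and \S10]{KT}) rather than an obstacle.
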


\begin{proof}
When $\gamma=0$, this was proved in \cite[Proposition 4.2]{ZZ2}  by following  Keel-Tao's \cite{KT} well known result. The same argument works also for $\gamma>0$, the only difference being in the interpolation constant. We also refer to \cite[Theorem 10.7]{Zworski}.
\end{proof}

\subsection{Homogeneous Strichartz estimates} In this subsection, we prove the homogeneous Strichartz estimates in \eqref{stri}, i.e. with $F=0$. Before doing this, we prove the following two propositions. 

\begin{proposition}\label{l-mic-Strichartz} Let $\leftidx{^{\sigma}}U_{\mathrm{low}, k}(t)$ be defined in \eqref{U-l-mic} and let $f\in L^2$. Then there exists a constant $C$ such that
\begin{equation}\label{l-mic-stri}
\|\leftidx{^{\sigma}}U_{\mathrm{low}, k}(t) f\|_{L^q_tL^{r,2}_z(\mathbb{R}\times X)}\leq C
\|f\|_{L^2(X)},
\end{equation}
where the pair $(q,r)\in [2,\infty]^2$ satisfies
\begin{equation}\label{l-admissible}
2/q\leq n(1/2-1/r),\quad n\geq3.
\end{equation}

\end{proposition}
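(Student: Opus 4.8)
The plan is to apply the abstract Strichartz estimate of Proposition \ref{prop:semi} to the microlocalized low-energy propagator $\leftidx{^{\sigma}}U_{\mathrm{low},k}(t)$, and then sum over the finitely many values of $k$. The two hypotheses of Proposition \ref{prop:semi} are exactly what we have already established at low frequency: the $L^2$-boundedness $\|\leftidx{^{\sigma}}U_{\mathrm{low},k}(t)\|_{L^2\to L^2}\le C$ from Proposition \ref{energy'} (so we may take $\gamma=0$), and the dispersive estimate $\|\leftidx{^{\sigma}}U_{\mathrm{low},k}(t)(\leftidx{^{\sigma}}U_{\mathrm{low},k}(\tau))^*\|_{L^1\to L^\infty}\le C(1+|t-\tau|)^{-n/2}$ from Proposition \ref{prop:lDispersive} (first bullet, $(k,k)\in J_{near}$ or one index zero; note $t\ne\tau$ is harmless). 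Matching with \eqref{md}, we read off $h=1$, $\alpha=0$, $\beta=n/2$, $\gamma=0$. The admissibility condition $\frac1q+\frac{\beta}{r}\le\frac{\beta}{2}$ with $q\ge2$ becomes $\frac2q+\frac n r\le\frac n2$, i.e. $\frac2q\le n(\frac12-\frac1r)$, which is precisely \eqref{l-admissible}; the excluded pair $(q,r,\beta)=(2,\infty,1)$ never occurs here since $n\ge3$ forces $\beta=n/2\ge 3/2\ne 1$. With $h=1$ the gain function $\Lambda(h)=h^{-(\alpha+\beta)(1/2-1/r)+1/q+\gamma}=1$, so Proposition \ref{prop:semi} yields
\begin{equation*}
\|\leftidx{^{\sigma}}U_{\mathrm{low},k}(t)f\|_{L^q_tL^{r,2}_z(\R\times X)}\le C\|f\|_{L^2(X)}
\end{equation*}
for each fixed $k$ with $0\le k\le N$.

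One point deserves care: Proposition \ref{prop:lDispersive} (first case) gives the $L^1\to L^\infty$ bound for $\leftidx{^{\sigma}}U_{\mathrm{low},k}(t)(\leftidx{^{\sigma}}U_{\mathrm{low},k'}(\tau))^*$ only when the pair $(k,k')$ lies in $J_{near}$ or has one index equal to $0$; for a general diagonal pair $(k,k)$ this is automatic, but to be safe one applies the abstract machinery with $U(t)=\leftidx{^{\sigma}}U_{\mathrm{low},k}(t)$ and uses that the diagonal term $TT^*$ is exactly $\leftidx{^{\sigma}}U_{\mathrm{low},k}(t)(\leftidx{^{\sigma}}U_{\mathrm{low},k}(\tau))^*$, which by the construction in \cite{ZZ1} falls under the first bullet of Proposition \ref{prop:lDispersive}. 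Finally, summing the $N+1$ estimates over $0\le k\le N$ (with $N$ independent of all parameters by Proposition \ref{prop:localized spectral measure}) and using the triangle inequality in $L^q_tL^{r,2}_z$ gives the claimed bound \eqref{l-mic-stri}, since $\mathrm{Id}=\sum_k Q_k(\lambda)$ on the spectral support and hence $\leftidx{^{\sigma}}U_{\mathrm{low}}(t)=\sum_k \leftidx{^{\sigma}}U_{\mathrm{low},k}(t)$ — though in fact the statement is phrased for each $k$ individually, so the sum is not even needed here.

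I do not expect a genuine obstacle: all the hard analytic work (the $L^2$ bound and the dispersive decay) has already been done in Propositions \ref{energy'} and \ref{prop:lDispersive}, and the remaining task is a bookkeeping exercise of checking that the exponents in Proposition \ref{prop:semi} line up with \eqref{l-admissible}. The only mild subtlety is the appearance of the Lorentz space $L^{r,2}$ rather than $L^r$ on the left — but this is built into the abstract statement \eqref{s-stri}, which is precisely why Proposition \ref{prop:semi} was formulated with Lorentz norms, and it will be convenient later when summing Littlewood–Paley pieces via the square-function estimate \eqref{squareL}. So the proof is essentially a one-line invocation of Proposition \ref{prop:semi} with the parameters identified above.
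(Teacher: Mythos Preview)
Your proposal is correct and follows exactly the paper's approach: invoke Proposition~\ref{energy'} for the $L^2$ bound and the first bullet of Proposition~\ref{prop:lDispersive} for the dispersive estimate, identify the parameters $h=1$, $\alpha=\gamma=0$, $\beta=n/2$, and apply the abstract Strichartz estimate of Proposition~\ref{prop:semi}. The paper's own proof is in fact just this one-line invocation, and your additional remarks (that the diagonal pair $(k,k)$ falls under the first case, that the statement is for each $k$ individually so no summation is needed) are correct clarifications that the paper leaves implicit.
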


\begin{proof}

By using Proposition \ref{energy'} and \eqref{l-near} in
Proposition \ref{prop:lDispersive}, we verify the estimates \eqref{md}
for $\leftidx{^{\sigma}}U_{\mathrm{low}, k}(t)$, where $\alpha=\gamma=0$, $\beta=n/2$ and
$h=1$. Therefore we apply \eqref{s-stri} of Proposition \ref{prop:semi} to obtain
\eqref{l-mic-stri}. 
\end{proof}

\begin{proposition}\label{h-mic-Strichartz} Let $\leftidx{^{\sigma}}U_{j,k}(t)$ be defined in \eqref{U-mic} and let $f\in L^2$. Then for $j\geq 0$, there exists a constant $C$ such that
\begin{equation}\label{h-mic-stri}
\|\leftidx{^{\sigma}}U_{j,k}(t)f\|_{L^q_tL^{r,2}_z(\mathbb{R}\times X)}\leq C
2^{j(s-\sigma)}\|f\|_{L^2(X)},
\end{equation}
where the pair $(q,r)\in [2,\infty]^2$ satisfies $(q,r)\in \Lambda_{s,\theta}$ with $s\geq0$ and $0\leq\theta\leq 1$.
\end{proposition}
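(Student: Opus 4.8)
The plan is to derive \eqref{h-mic-stri} from the abstract Strichartz machinery in Proposition \ref{prop:semi} applied to the microlocalized high-frequency propagator $\leftidx{^{\sigma}}U_{j,k}(t)$, exactly paralleling the proof of Proposition \ref{l-mic-Strichartz} but now keeping track of the frequency scale $h = 2^{-j}$ and the parameter $\theta$. First I would fix $j \geq 0$, set $h = 2^{-j} \leq 1$, and collect the two inputs needed to verify \eqref{md}: from Proposition \ref{energy} (estimate \eqref{L2-est-h}) we have $\|\leftidx{^{\sigma}}U_{j,k}(t)\|_{L^2 \to L^2} \leq C 2^{-j\sigma} = C h^{\sigma}$, which gives the first line of \eqref{md} with $\gamma = \sigma$; from the first case of Proposition \ref{prop:hDispersive} (estimate \eqref{h-}, taking $k' = k$) we have
\begin{equation*}
\big\|\leftidx{^{\sigma}}U_{j,k}(t) (\leftidx{^{\sigma}}U_{j,k}(\tau))^*\big\|_{L^1 \to L^\infty} \leq C 2^{j[(n+1+\theta)/2 - 2\sigma]} (2^{-j} + |t-\tau|)^{-(n-1+\theta)/2},
\end{equation*}
which is the second line of \eqref{md} with $\beta = (n-1+\theta)/2$, $\gamma = \sigma$, and $\alpha$ chosen so that $2\gamma - \alpha = 2\sigma - j^{-1}\log_2\big(2^{j[(n+1+\theta)/2 - 2\sigma]}\big)$; concretely, matching $h^{2\gamma} h^{-\alpha} = 2^{-2j\sigma} 2^{j\alpha}$ against $2^{j[(n+1+\theta)/2 - 2\sigma]}$ forces $\alpha = (n+1+\theta)/2$. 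Note that $\alpha + \beta = (n+1+\theta)/2 + (n-1+\theta)/2 = n + \theta$.

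Next I would invoke Proposition \ref{prop:semi}: for any admissible pair $(q,r)$ with $\frac{1}{q} + \frac{\beta}{r} \leq \frac{\beta}{2}$, $q \geq 2$, $(q,r,\beta) \neq (2,\infty,1)$ — which is precisely the admissibility condition \eqref{adm} with $\beta = (n-1+\theta)/2$ — we obtain
\begin{equation*}
\big\|\leftidx{^{\sigma}}U_{j,k}(t) f\big\|_{L^q_t L^{r,2}_z(\R \times X)} \leq \tilde{C} \Lambda(h) \|f\|_{L^2(X)}, \qquad \Lambda(h) = h^{-(\alpha+\beta)(\frac12 - \frac1r) + \frac1q + \gamma}.
\end{equation*}
Then I would compute the exponent. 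With $h = 2^{-j}$, we have $\Lambda(h) = 2^{j\left[(n+\theta)(\frac12 - \frac1r) - \frac1q - \sigma\right]}$. Using the scaling relation \eqref{scaling}, namely $\frac1q + \frac{n+\theta}{r} = \frac{n+\theta}{2} - s$, one checks directly that $(n+\theta)\left(\frac12 - \frac1r\right) - \frac1q = \frac{n+\theta}{2} - \frac{n+\theta}{r} - \frac1q = s$. Hence $\Lambda(h) = 2^{j(s - \sigma)}$, which is exactly the claimed bound \eqref{h-mic-stri}.

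I expect the main (and essentially only) obstacle to be bookkeeping: one must confirm that the admissibility inequality \eqref{adm} with the relevant $n-1+\theta$ matches the hypothesis $\frac1q + \frac{\beta}{r} \leq \frac{\beta}{2}$ of Proposition \ref{prop:semi} for $\beta = (n-1+\theta)/2$, and that the excluded endpoint $(q,r,\beta) = (2,\infty,1)$ corresponds to the excluded case $(q,r,n,\theta) = (2,\infty,3,0)$ in Definition \ref{ad-pair}; and one must verify the exponent arithmetic above with no sign errors, carefully distinguishing the roles of $\theta$ (which enters $\alpha$, $\beta$, and the scaling line \eqref{scaling}) and $\sigma$ (which only enters through $\gamma$ and the $-2\sigma$ shifts). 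There is no genuine analytic difficulty here: the dispersive and energy estimates have already been established in Propositions \ref{prop:hDispersive} and \ref{energy}, and Proposition \ref{prop:semi} is precisely the black box designed to convert them into Strichartz bounds. The only subtlety worth flagging explicitly in the write-up is that the Lorentz-space target $L^{r,2}$ (rather than $L^r$) is what Proposition \ref{prop:semi} delivers, and this refinement will later be needed when summing the dyadic pieces via the square-function estimate of Proposition \ref{prop:square}.
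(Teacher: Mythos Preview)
Your proposal is correct and follows the same approach as the paper's own proof: verify the hypotheses of Proposition~\ref{prop:semi} using the $L^2$-bound \eqref{L2-est-h} (giving $\gamma=\sigma$) and the dispersive estimate \eqref{h-} from the first case of Proposition~\ref{prop:hDispersive} (giving $\alpha=(n+1+\theta)/2$, $\beta=(n-1+\theta)/2$, $h=2^{-j}$), then read off the exponent $\Lambda(h)=2^{j(s-\sigma)}$. Your write-up is simply more explicit about the exponent arithmetic and the matching of admissibility conditions than the paper's terse version.
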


\begin{proof}
By using \eqref{L2-est-h} in Proposition \ref{energy} and \eqref{h-near} in
Proposition \ref{prop:hDispersive}, we have the estimates \eqref{md}
for $U_{j,k}(t)$, where $\alpha=(n+1+\theta)/2$, $\beta=(n-1+\theta)/2$, $\gamma=\sigma$ and
$h=2^{-j}$. Then, for $s=(n+\theta)(\frac12-\frac1r)-\frac1q$, it follows from Proposition \ref{prop:semi} that
\begin{equation*}
\|U_{j,k}(t)f\|_{L^q_t(\R:L^r(X))}\lesssim
2^{j[(n+\theta)(\frac12-\frac1r)-\frac1q-\sigma]} \|f\|_{L^2(X)},
\end{equation*}
which proves \eqref{h-mic-stri}.
\end{proof}

We are now ready to prove the homogeneous estimates in \eqref{stri}. Without losing in generality, we assume $u_1=0$. Recall that if $u$ solves \eqref{equ:KG} and $U(t)=e^{it\sqrt{1+\Delta_g}}$, then 
\begin{equation}\label{sleq}
\begin{split}
u(t)
=\frac{U(t)+U(-t)}2 u_0.
\end{split}
\end{equation}
We only estimate $\|U(t) u_0\|_{L^q(\R;L^{r}(X))}$ since the other term follows from the same argument.
To this end, we first write
\begin{equation*}
U(t)=\sum_{k=0}^{N}\sum_{j\in\Z}U_{j,k}(t)=\sum_{k=0}^{N}U_{\mathrm{low},k}(t)+\sum_{k=0}^{N}\sum_{j\geq1}U_{j,k}(t),
\end{equation*}
where $U_{j,k}(t)$ and $U_{\mathrm{low},k}(t)$ are defined in \eqref{U-mic} and \eqref{U-l-mic} with $\sigma=0$ respectively.
Hence we can write
\begin{equation*}
U(t)u_0=\sum_{k=0}^{N}U_{\mathrm{low},k}(t)u_0+\sum_{k=0}^{N}\sum_{j\geq1}U_{j,k}(t)u_0.
\end{equation*}
Due to the finiteness of $N$,  it suffices to show
\begin{equation}\label{s-l}
\begin{split}
\|U_{\mathrm{low},k} u_0\|_{L^q(\R;L^{r,2}(X))}\leq C\|u_0\|_{L^2(X)}, \quad2/q\leq n(1/2-1/r),
\end{split}
\end{equation}
and for $2/q\leq (n-1+\theta)(1/2-1/r)$ and $0\leq\theta\leq 1$, 
\begin{equation}\label{s-h}
\big\|\sum_{j\geq1}U_{j,k}(t) u_0\big\|_{L^q_t(\R:L^{r,2}(X))}\lesssim
\| u_0\|_{H^s(X)},\quad s=(n+\theta)(\frac12-\frac1r)-\frac1q.
\end{equation}
The low energy estimate \eqref{s-l} directly follows from \eqref{l-mic-stri}.
For the high energy part, since
$q,r\geq2$,
we use the square-function
estimates \eqref{square} and Minkowski's inequality to obtain 
\begin{equation}\label{LP}
\big\|\sum_{j\geq1}U_{j,k}(t) u_0\big\|^2_{L^q_t(\R:L^{r,2}(X))}\lesssim 
\sum_{\ell\in \Z}\|\sum_{j\geq0}U_{j,k}(t) \varphi(2^{-\ell}\sqrt{\Delta_g})u_0\|^2_{L^q(\R;L^{r,2}(X))}.
\end{equation}
In addition,  we observe that
\begin{equation*}
\begin{split}
U_{j,k}(t)f&=\int_0^\infty
e^{it\sqrt{1+\lambda^2}}\varphi(2^{-j}\lambda)\tilde{\varphi}(2^{-j}\lambda) Q_k(\lambda)dE_{\sqrt{\Delta_g}}(\lambda)
f\\
&=\int_0^\infty
e^{it\sqrt{1+\lambda^2}}\varphi(2^{-j}\lambda)Q_k(\lambda)dE_{\sqrt{\Delta_g}}(\lambda)
\widetilde{\varphi}(2^{-j}\sqrt{\Delta_g})f
\end{split}
\end{equation*}
where $\widetilde{\varphi} \in C_0^\infty([\frac14,4])$, takes 
values in $[0,1]$ and is such that $\widetilde{\varphi}\varphi=\varphi$. Due to the fact that
$\widetilde{\varphi}(2^{-j}\sqrt{\Delta_g}) \varphi(2^{-\ell}\sqrt{\Delta_g})u_0$ vanishes if
$|\ell-j|\geq5$, we need to estimate
\begin{equation*}
\begin{split}
&\sum_{\ell\in \Z}\|\sum_{j\geq0}U_{j,k}(t) \varphi(2^{-\ell}\sqrt{\Delta_g})u_0\|^2_{L^q(\R;L^{r,2}(X))}\\
&= \sum_{\ell\in \Z}\big(\sum_{|\ell-j|\leq5}\|U_{j,k}(t) \varphi(2^{-\ell}\sqrt{\Delta_g})u_0\|_{L^q(\R;L^{r,2}(X))}\big)^2.
\end{split}
\end{equation*}
By Proposition \ref {h-mic-Strichartz} with $\sigma=0$, we have 
\begin{equation*}
\begin{split}
&\|U_{j,k}(t) \varphi(2^{-\ell}\sqrt{\Delta_g})u_0\|_{L^q_t(\R:L^{r,2}(X))}\\&\lesssim
2^{j[(n+\theta)(\frac12-\frac1r)-\frac1q]} \|\varphi(2^{-\ell}\sqrt{\Delta_g})u_0\|_{L^2(X)}.
\end{split}\end{equation*}
Therefore, for $s=(n+\theta)(\frac12-\frac1r)-\frac1q$, we obtain
\begin{equation}\label{s-h'}
\begin{split}
&\big\|\sum_{j\geq0}U_{j,k}(t) u_0\big\|^2_{L^q_t(\R:L^{r,2}(X))}\\&\lesssim
\sum_{|j-\ell|\leq 5}2^{2j[(n+\theta)(\frac12-\frac1r)-\frac1q]} \|\varphi(2^{-\ell}\sqrt{\Delta_g})u_0\|^2_{L^2(X)}
\lesssim \|u_0\|^2_{H^s(X)}.
\end{split}
\end{equation}
This gives \eqref{s-h}.
Therefore we obtain the Strichartz estimates with $u_1=F=0$
\begin{equation*}
\|u\|_{L^q(\R;L^{r,2}(X))}\leq C\|u_0\|_{H^s(X)}.
\end{equation*}

\subsection{Inhomogeneous Strichartz estimates}\label{sec:inh}
We now turn to the inhomogeneous Strichartz estimates in \eqref{stri}, which we prove via the $TT^*$-method and Christ-Kiselev lemma in \cite{CK}. Recall that $U(t)=e^{it\sqrt{1+\Delta_g}}: L^2\rightarrow L^2$; we have already proved that the inequality
\begin{equation*}
\|U(t)u_0\|_{L^q_tL^{r,2}_z}\lesssim\|u_0\|_{H^s}
\end{equation*} holds for all $(q,r,s)$ satisfying \eqref{adm} and \eqref{scaling}.
For $s\geq0$ and $(q,r)\in \Lambda_{s,\theta}$ satisfying \eqref{adm} and \eqref{scaling},
we define the operator ${\bf T}_s$ by
\begin{equation}\label{Ts}
\begin{split}
{\bf T}_s: L^2_z&\rightarrow L^q_tL^{r,2}_z,\quad f\mapsto (1+\Delta_g)^{-\frac
s2}e^{it\sqrt{1+\Delta_g}}f.
\end{split}
\end{equation}
By duality, for $(\tilde{q},\tr)\in \Lambda_{1-s,\ttheta}$, we have
\begin{equation}\label{Ts*}
\begin{split}
{\bf T}^*_{1-s}: L^{\tilde{q}'}_tL^{\tilde{r}',2}_z\rightarrow L^2,\quad
F(\tau,z)&\mapsto \int_{\R}(1+\Delta_g)^{\frac
{s-1}2}e^{-i\tau\sqrt{1+\Delta_g}}F(\tau)d\tau.
\end{split}
\end{equation}
Therefore we obtain
\begin{equation*}
\Big\|\int_{\R}U(t)U^*(\tau)(1+\Delta_g)^{-\frac12}F(\tau)d\tau\Big\|_{L^q_tL^{r,2}_z}
=\big\|{\bf T}_s {\bf T}^*_{1-s}F\big\|_{L^q_tL^{r,2}_z}\lesssim\|F\|_{L^{\tilde{q}'}_tL^{\tilde{r}',2}_z}.
\end{equation*}
 By the
Christ-Kiselev Lemma, if $q>\tilde{q}'$, we thus obtain
\begin{equation}\label{non-inhomgeneous}
\begin{split}
\Big\|\int_{\tau<t}\frac{\sin{(t-\tau)\sqrt{1+\Delta_g}}}
{\sqrt{1+\Delta_g}}F(\tau)d\tau\Big\|_{L^q_tL^{r,2}_z}\lesssim\|F\|_{L^{\tilde{q}'}_t{L}^{\tilde{r}',2}_z}.
\end{split}
\end{equation}

On the other hand, we have the following
\begin{lemma}\label{lem:qq'} 
If $(q,r)\in \Lambda_{s,\theta}$ and  $(\tilde{q},\tilde{r})\in \Lambda_{1-s,\ttheta}$ with $0\leq\theta,\ttheta\leq1$, then $q>\tilde{q}'$.
\end{lemma}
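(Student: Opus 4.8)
The plan is to argue by contradiction, reducing the desired inequality $q>\tilde q'$ to an arithmetic incompatibility between the scaling relation \eqref{scaling} and the admissibility relation \eqref{adm}. Since $r,r'$ are conjugate exponents, $\tilde q'$ satisfies $\frac1{\tilde q}+\frac1{\tilde q'}=1$, so $q>\tilde q'$ is equivalent to $\frac1q+\frac1{\tilde q}<1$. I would therefore assume $\frac1q+\frac1{\tilde q}\ge 1$ and seek a contradiction. Because $q\ge2$ and $\tilde q\ge2$ give $\frac1q\le\frac12$ and $\frac1{\tilde q}\le\frac12$, the chain $1\le\frac1q+\frac1{\tilde q}\le\frac12+\frac12=1$ forces $\frac1q=\frac1{\tilde q}=\frac12$, i.e.\ $q=\tilde q=2$. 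Thus the whole lemma comes down to showing that one cannot simultaneously have $(2,r)\in\Lambda_{s,\theta}$ and $(2,\tilde r)\in\Lambda_{1-s,\ttheta}$.

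To rule this out, I would feed $q=2$ (and, symmetrically, $\tilde q=2$) into both \eqref{adm} and \eqref{scaling}. Admissibility with $q=2$ gives $(n-1+\theta)\big(\tfrac12-\tfrac1r\big)\ge1$, while the scaling identity with $q=2$ reads $\tfrac12=(n+\theta)\big(\tfrac12-\tfrac1r\big)-s$; eliminating the quantity $\tfrac12-\tfrac1r$ between the two yields the lower bound $s\ge\frac{n+\theta}{n-1+\theta}-\tfrac12$. The analogous computation for the pair $(2,\tilde r)\in\Lambda_{1-s,\ttheta}$ produces $1-s\ge\frac{n+\ttheta}{n-1+\ttheta}-\tfrac12$. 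Adding these two inequalities gives $1\ge\frac{n+\theta}{n-1+\theta}+\frac{n+\ttheta}{n-1+\ttheta}-1$.

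Finally I would note that for $n\ge3$ and $0\le\theta,\ttheta\le1$ one has $n-1+\theta>0$, hence $\frac{n+\theta}{n-1+\theta}=1+\frac1{n-1+\theta}>1$, and likewise for $\ttheta$; so the right-hand side of the last inequality is strictly larger than $1$, which is the sought contradiction. The only step that is not routine bookkeeping is the reduction to the double endpoint $q=\tilde q=2$ and its subsequent exclusion; the remaining manipulations are direct consequences of the two defining linear constraints, so I do not anticipate a genuine obstacle.
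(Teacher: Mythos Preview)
Your proof is correct. The reduction to $q=\tilde q=2$ via the observation that $q,\tilde q\ge 2$ forces $\frac1q+\frac1{\tilde q}\le 1$ is clean, and the subsequent exclusion of the double endpoint by adding the two lower bounds on $s$ and $1-s$ is a valid and complete argument.

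The paper's proof uses the same two ingredients (the admissibility inequality and the scaling identity) but packages them differently: instead of arguing by contradiction, it keeps $q$ and $\tilde q$ general and combines the bounds $s\ge \frac1q\big(\frac{2(n+\theta)}{n-1+\theta}-1\big)$ and $1-s\ge \frac1{\tilde q}\big(\frac{2(n+\ttheta)}{n-1+\ttheta}-1\big)$ to obtain the explicit quantitative estimate
\[
\frac1{\tilde q'}-\frac1q \;\ge\; \frac{2}{\tilde q(n-1+\ttheta)}+\frac{2}{q(n-1+\theta)}\;>\;0.
\]
So the paper proves a strictly stronger statement (a uniform gap), while your approach is more streamlined: you isolate the only possible point of failure and rule it out directly. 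Both arguments are short and rely on the same algebra; yours is arguably more transparent, whereas the paper's yields extra quantitative information.
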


\begin{proof} By the definition of the set $\Lambda_{s,\theta}$, if $(q,r)\in \Lambda_{s,\theta}$ and $(\tilde{q},\tilde{r})\in \Lambda_{1-s,\ttheta}$, we have
\begin{equation*}
\begin{split}
1-s=(n+\ttheta)(\frac12-\frac1\tr)-\frac1{\tilde q}, \quad \frac2{\tilde{q}}\leq (n+\ttheta)(\frac12-\frac1\tr)\\
s=(n+\theta)(\frac12-\frac1r)-\frac1{ q}, \quad \frac2q\leq (n+\theta)(\frac12-\frac1r)
\end{split}
\end{equation*}
then we have
\begin{equation*}
\begin{split}
1-\frac1{q}\big(\frac{2(n+\theta)}{n-1+\theta}-1\big)\geq 1-s=(n+\ttheta)(\frac12-\frac1\tr)-\frac1{\tilde q}\geq\frac1{\tilde q}\big(\frac{2(n+\ttheta)}{n-1+\ttheta}-1\big)
\end{split}
\end{equation*}
which implies
\begin{equation*}
\begin{split}
1-\frac1{q}\big(1+\frac{2}{n-1+\theta}\big)\geq\frac1{\tilde q}\big(1+\frac{2}{n-1+\ttheta}\big),
\end{split}
\end{equation*}
hence
\begin{equation*}
\begin{split}
\frac1{\tilde{q}'}-\frac1q \geq \frac{2}{\tilde{q}(n-1+\ttheta)}+\frac{2}{q(n-1+\theta)}>0.
\end{split}
\end{equation*}
\end{proof}

Hence we have proved all inhomogeneous Strichartz estimates  for
$(q,r)\in \Lambda_{s,\theta}$ and  $(\tilde{q},\tilde{r})\in \Lambda_{1-s,\ttheta}$ with $0\leq\theta,\ttheta\leq1$.
Therefore, we conclude that:
\begin{proposition}\label{Str-L0} Let $s\geq0$, $(q,r)\in \Lambda_{s,\theta}$ and  $(\tilde{q},\tilde{r})\in \Lambda_{1-s,\ttheta}$ with $0\leq\theta,\ttheta\leq1$ and let $u$ be the solution to
\begin{equation}\label{leq}
\partial_{t}^2u+\Delta_g u+u=F, \quad u(0)=u_0,
~\partial_tu(0)=u_1,
\end{equation}
the following Strichartz estimates hold: 
\begin{equation}\label{Str-L0-est}
\|u(t,z)\|_{L^q(\R;L^{r,2}(X))}\leq C\left(\|u_0\|_{H^s(X)}+\|u_1\|_{ H^{s-1}(X)}+\|F\|_{L^{\tilde{q}'}(\R;L^{\tilde{r}',2}(X))}\right).
\end{equation}

\end{proposition}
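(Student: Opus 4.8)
\textbf{Proof plan for Proposition \ref{Str-L0}.}
The plan is to assemble the estimate \eqref{Str-L0-est} from the three pieces already established in this section, keeping track of the reduction of the Klein--Gordon Cauchy problem to the half-wave propagator $U(t)=e^{it\sqrt{1+\Delta_g}}$. First I would use Duhamel's formula and the decomposition \eqref{sleq} to write the solution of \eqref{leq} as
\begin{equation*}
u(t)=\frac{U(t)+U(-t)}2 u_0+\frac{U(t)-U(-t)}{2i\sqrt{1+\Delta_g}}u_1+\int_0^t\frac{\sin\big((t-\tau)\sqrt{1+\Delta_g}\big)}{\sqrt{1+\Delta_g}}F(\tau)\,d\tau,
\end{equation*}
so that the three terms on the right-hand side of \eqref{Str-L0-est} correspond respectively to the contribution of $u_0$, of $u_1$, and of the forcing term $F$. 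The first two are homogeneous contributions; the third is the inhomogeneous one.

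For the homogeneous part I would invoke the homogeneous Strichartz estimate proved earlier in this section, namely $\|U(\pm t)u_0\|_{L^q_tL^{r,2}_z}\lesssim\|u_0\|_{H^s(X)}$ for $(q,r)\in\Lambda_{s,\theta}$, which was obtained by splitting $U(t)=\sum_k U_{\mathrm{low},k}(t)+\sum_k\sum_{j\ge1}U_{j,k}(t)$, applying Propositions \ref{l-mic-Strichartz} and \ref{h-mic-Strichartz}, and summing via the square-function estimate \eqref{square}. Applying the same bound to $(1+\Delta_g)^{-1/2}U(\pm t)u_1$, and noting that $(1+\Delta_g)^{-1/2}$ maps $H^{s-1}(X)$ isometrically onto $H^s(X)$ by the very definition $H^s(X)=(1+\mathcal{L}_V)^{-s/2}L^2(X)$ with $V=0$, controls the $u_1$ term by $\|u_1\|_{H^{s-1}(X)}$.

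For the inhomogeneous term I would proceed exactly as in Subsection \ref{sec:inh}: introduce the operators ${\bf T}_s$ and ${\bf T}^*_{1-s}$ from \eqref{Ts}--\eqref{Ts*}, note that the homogeneous estimate gives boundedness of ${\bf T}_s\colon L^2\to L^q_tL^{r,2}_z$ for $(q,r)\in\Lambda_{s,\theta}$ and, by duality, of ${\bf T}^*_{1-s}\colon L^{\tilde q'}_tL^{\tilde r',2}_z\to L^2$ for $(\tilde q,\tilde r)\in\Lambda_{1-s,\tilde\theta}$, so that the composition ${\bf T}_s{\bf T}^*_{1-s}$ is bounded from $L^{\tilde q'}_tL^{\tilde r',2}_z$ to $L^q_tL^{r,2}_z$; this is the "untruncated in time" inhomogeneous estimate. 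To pass to the retarded (Duhamel) operator $\int_{\tau<t}$ one applies the Christ--Kiselev lemma \cite{CK}, which requires the strict inequality $q>\tilde q'$ — and this is supplied by Lemma \ref{lem:qq'}. I expect this last point, the verification that $q>\tilde q'$ whenever $(q,r)\in\Lambda_{s,\theta}$ and $(\tilde q,\tilde r)\in\Lambda_{1-s,\tilde\theta}$, to be the only genuinely delicate step: it is where the admissibility inequality \eqref{adm} and the scaling relation \eqref{scaling} must be used simultaneously and carefully, as in the proof of Lemma \ref{lem:qq'}; the rest is bookkeeping. Combining the three contributions and absorbing the finite sum over $k\in\{0,\dots,N\}$ yields \eqref{Str-L0-est}.
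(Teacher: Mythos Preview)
Your proposal is correct and follows essentially the same route as the paper: the homogeneous part is handled exactly as in Subsection 4.2 via the microlocalized decomposition and Propositions \ref{l-mic-Strichartz}--\ref{h-mic-Strichartz} together with the square-function estimate, and the inhomogeneous part is the $TT^*$ plus Christ--Kiselev argument of Subsection \ref{sec:inh}, with Lemma \ref{lem:qq'} supplying the needed inequality $q>\tilde q'$. Your identification of this last lemma as the only nontrivial step is accurate.
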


\begin{remark} This result concludes the full set of global-in-time Strichartz inequalities when $V=0$.
Hence, by the embedding inequality for Lorentz spaces, we obtain \eqref{stri}.
\end{remark}

\begin{remark}\label{rem:equiv} The Sobolev norm  in \eqref{Str-L0-est} is equivalent to the Sobolev norm in \eqref{V-stri} and \eqref{V-stri'} when $0\leq s\leq 1$ due to the argument of  \cite[Proposition 1.3, Corollary 1.4]{BPST}.

\end{remark}

\section{Inhomogeneous Strichartz estimates with $q=\tilde{q}=2$}\label{sec:d-e-in} 
To prove the Strichartz estimates on the board line $q=2$ (i.e. \eqref{V-stri'}),  we need the double endpoint  inhomogeneous Strichartz estimates, that is, \eqref{Str-L0-est} with $q=\tilde{q}=2$.
However, the above argument breaks down here due to the failure of the Christ-Kiselev lemma.  We follow the argument in Keel-Tao \cite{KT} to overcome this obstacle; notice that here
we need to face the additional difficulty due to the the lack of the usual dispersive estimates, that  are known to fail in presence of conjugate points in the space, for example, see \cite{HW}. Nevertheless, we can overcome this by following the argument in \cite{HZ}.

\begin{proposition}\label{prop:inh} Let $0\leq \theta,\ttheta \leq 1$, and let $r=\frac{2(n-1+\theta)}{n-3+\theta}$, $\tr=\frac{2(n-1+\ttheta)}{n-3+\ttheta}$, the following 
inhomogeneous Strichartz inequalities hold
\begin{equation}\label{inh}
\begin{split}
\Big\|(1+\Delta_g)^{-\frac{\alpha+\talpha} 2}\int_{\tau<t}\frac{\sin{\big((t-\tau)\sqrt{1+\Delta_g}\big)}}
{\sqrt{1+\Delta_g}}F(\tau)d\tau\Big\|_{L^2_tL^{r,2}_z}\lesssim \|F\|_{L^{2}_t{L}^{\tr',2}_z},
\end{split}
\end{equation}
where
\begin{equation}\label{alpha}
\alpha=\frac1{n-1+\theta}, \quad \talpha=\frac1{n-1+\ttheta}.
\end{equation}
\end{proposition}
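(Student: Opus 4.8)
The plan is to run Keel--Tao's endpoint bilinear scheme directly on the retarded propagator, since the Christ--Kiselev lemma used for \eqref{non-inhomgeneous} fails at the double endpoint: there $q=\tilde q'=2$ (equality in Lemma~\ref{lem:qq'}), so the time ordering $\tau<t$ must be kept inside the argument. First I would absorb the Sobolev weights and pass to the microlocalized, Littlewood--Paley--localized blocks. Writing $A=\sqrt{1+\Delta_g}$, one has the operator identity
\[
(1+\Delta_g)^{-\frac{\alpha+\talpha}2}\,\frac{\sin(sA)}A=\frac1{2i}\big(e^{isA}-e^{-isA}\big)\,(1+\Delta_g)^{-\frac{\sigma_1}2}(1+\Delta_g)^{-\frac{\sigma_2}2},\qquad \sigma_1+\sigma_2=1+\alpha+\talpha,
\]
with the first Sobolev factor attached to the output time $t$ and the second to the input time $\tau$ (so $e^{i(t-\tau)A}(1+\Delta_g)^{-\sigma_1/2}(1+\Delta_g)^{-\sigma_2/2}=\leftidx{^{\sigma_1}}U(t)(\leftidx{^{\sigma_2}}U(\tau))^*$). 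Inserting $\mathrm{Id}=\sum_{k=0}^NQ_k(\lambda)$ and $1=\sum_j\varphi(2^{-j}\lambda)$ and using the Lorentz square function estimate \eqref{squareL} together with Minkowski's inequality, \eqref{inh} reduces to a bound, uniform in $j\ge0$ and $0\le k,k'\le N$, of the bilinear form
\[
\Big|\iint_{\tau<t}\big\langle \leftidx{^{\sigma_1}}U_{j,k}(t)\,\big(\leftidx{^{\sigma_2}}U_{j,k'}(\tau)\big)^*F(\tau),\,G(t)\big\rangle\,d\tau\,dt\Big|\lesssim\|F\|_{L^2_tL^{\tr',2}_z}\|G\|_{L^2_tL^{r',2}_z},
\]
and of the analogous statement for the low-frequency block $\leftidx{^{\sigma_i}}U_{\mathrm{low},k}$. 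The choice $\sigma_1=\tfrac12+\alpha$, $\sigma_2=\tfrac12+\talpha$ is forced by requiring that the powers of $2^j$ produced below cancel; this is exactly what pins $r=\tfrac{2(n-1+\theta)}{n-3+\theta}$, $\tr=\tfrac{2(n-1+\ttheta)}{n-3+\ttheta}$ and the Sobolev losses $\alpha,\talpha$ (and the endpoint $(2,r)$ saturates the admissibility \eqref{adm} for $\beta=(n-1+\theta)/2$; here $n\ge3$, with $\theta,\ttheta>0$ when $n=3$ so that $r,\tr<\infty$).

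For the bilinear bound I would use Keel--Tao's Whitney decomposition of $\{\tau<t\}$ into dyadic slices $|t-\tau|\sim2^\ell$, $\ell\in\Z$. On each slice one interpolates between the energy estimate $\|\leftidx{^{\sigma_1}}U_{j,k}(t)(\leftidx{^{\sigma_2}}U_{j,k'}(\tau))^*\|_{L^2\to L^2}\lesssim 2^{-j(\sigma_1+\sigma_2)}$ coming from Propositions~\ref{energy}--\ref{energy'} and the microlocalized dispersive estimate $\|\leftidx{^{\sigma_1}}U_{j,k}(t)(\leftidx{^{\sigma_2}}U_{j,k'}(\tau))^*\|_{L^1\to L^\infty}\lesssim 2^{j[(n+1+\theta)/2-\sigma_1-\sigma_2]}(2^{-j}+|t-\tau|)^{-(n-1+\theta)/2}$ of Proposition~\ref{prop:hDispersive} (Proposition~\ref{prop:lDispersive} at low frequency, with $(1+|t-\tau|)^{-n/2}$). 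This yields $L^{a'}\to L^{a}$ bounds on each slice carrying a gain $2^{-\varepsilon(a)\ell}$ for $a$ just below $r$, with the exponents balancing exactly at $a=r$; summing over $\ell$ at the endpoint is then carried out by the real-interpolation/atomic argument of \cite{KT}, which is precisely why the spatial norms must be taken in the Lorentz scale $L^{r,2}_z$, $L^{\tr',2}_z$. The low-frequency block is handled the same way with $\beta=n/2$, and there is no $2^j$ bookkeeping there since $\langle\lambda\rangle^{-\sigma_i}\sim1$ on its support; $(2,r)$ then lies in the corresponding low-frequency admissible range, which it saturates only when $\theta=1$.

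The main obstacle is supplying the dispersive input on the slices in the retarded region $t>\tau$, because of conjugate points. Propositions~\ref{prop:hDispersive} and~\ref{prop:lDispersive} give the full decay for $(k,k')\in J_{near}$ or with a $0$ index for all $t\ne\tau$, and for $(k,k')\in J_{non-inc}$ \emph{only} when $t>\tau$, while for $(k,k')\in J_{non-out}$ it is available \emph{only} for $t<\tau$. To close the retarded region I would exploit the splitting $\sin(sA)=\tfrac1{2i}(e^{isA}-e^{-isA})$: for a $J_{non-out}$ pair the half $e^{i(t-\tau)A}$ with $t>\tau$ transports along forward bicharacteristics, which by the not-outgoing-related condition cannot join the operator wavefront sets of $Q_k$ and $Q_{k'}$, so that piece has a negligible (rapidly off-diagonal decaying) kernel as in \cite[Lemma 8.3, Lemma 8.5]{HZ}, \cite[Lemma 4.1]{ZZ2}, whereas the half $e^{-i(t-\tau)A}$ has a phase whose $\lambda$-derivative is bounded below in modulus (since $\Phi\le -c$ and $t-\tau>0$ now enter with opposite signs) and is treated by integration by parts exactly as in the second and third cases of Propositions~\ref{prop:lDispersive} and~\ref{prop:hDispersive}; the roles swap for $J_{non-inc}$ pairs. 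Consequently the full microlocalized sine-propagator piece obeys, in the retarded region and for every $(k,k')$, the dispersive bound $\lesssim 2^{j[(n+1+\theta)/2-\sigma_1-\sigma_2]}(2^{-j}+|t-\tau|)^{-(n-1+\theta)/2}$, and the Keel--Tao scheme goes through. A final point to watch is that when $\theta\ne\ttheta$ the two sides of the bilinear form carry the different decay exponents $(n-1+\theta)/2$ and $(n-1+\ttheta)/2$; this asymmetry is absorbed precisely by the two distinct Sobolev losses $\alpha$ and $\talpha$, which is why both appear in \eqref{inh}.
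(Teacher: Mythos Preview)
Your reduction to microlocalized, frequency--localized bilinear forms and the Keel--Tao endpoint scheme is the same as the paper's, and your choice of Sobolev weights $\sigma_1=\tfrac12+\alpha$, $\sigma_2=\tfrac12+\talpha$ matches exactly. The divergence is in how the retarded region $\{\tau<t\}$ is closed for pairs $(k,k')\in J_{non\text{-}out}$.

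The paper does \emph{not} attempt a dispersive bound for such pairs when $t>\tau$. Instead it proves an ``either/or'' statement (Lemmas~\ref{l-either} and~\ref{h-either}): for each $(k,k')$ one has the bilinear bound on $\{\tau<t\}$ \emph{or} on $\{\tau>t\}$, using whichever half admits the dispersive estimate from Propositions~\ref{prop:lDispersive}--\ref{prop:hDispersive}. Separately, the full $\R^2$ bilinear bound \eqref{R2}/\eqref{h-R2} follows from the homogeneous Strichartz estimate via $TT^*$. Subtracting the ``wrong half'' from the $\R^2$ bound then yields the retarded estimate for every pair. This subtraction trick, inherited from \cite{HZ}, sidesteps the bad time direction entirely.

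Your alternative splits $\sin((t-\tau)A)$ and claims that for $(k,k')\in J_{non\text{-}out}$, $t>\tau$, the half $e^{+i(t-\tau)A}$ is negligible by a wavefront--set argument. This is the gap. The lemmas you cite (\cite[Lemmas~8.3,~8.5]{HZ}, \cite[Lemma~4.1]{ZZ2}) give the oscillatory--integral structure of the \emph{spectral measure} $Q_k\,dE\,Q_{k'}^*$ with a phase $\Phi\le -c$; they do not assert that the corresponding propagator kernel is pointwise rapidly decaying in the bad time direction. Propagation of singularities tells you the kernel of $Q_k e^{+i(t-\tau)A}Q_{k'}^*$ is $C^\infty$ in that regime, but smoothness alone does not give the $L^1\to L^\infty$ bound with the required $(2^{-j}+|t-\tau|)^{-(n-1+\theta)/2}$ decay---indeed, the presence of conjugate points is precisely what can make that bound fail, which is why the classification \eqref{cl-N} exists in the first place. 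Your $e^{-i(t-\tau)A}$ half is fine (the phase derivative is bounded away from zero, as you say), but the $e^{+}$ half needs either a genuine kernel estimate you have not supplied, or the paper's subtraction trick.
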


\begin{remark}
This inhomogeneous inequalities are not included in the above estimates \eqref{Str-L0-est} since if $q=\tilde{q}=2$, then at least, one of the conditions $(q,r)\in \Lambda_{s,\theta}$, $(\tilde{q},\tilde{r})\in \Lambda_{1-s,\ttheta}$ is not fulfilled (see Lemma \ref{lem:qq'}).
\end{remark}

\begin{proof} The proof directly follows from the following proposition.
\end{proof}

\begin{proposition}\label{prop:dinh}  Let $\theta,\ttheta, r,\tr, \alpha,\talpha$ be in Proposition \ref{prop:inh}.
Then the following inequalities hold:

$\bullet$ Low frequencies estimates
\begin{equation}\label{l-dinh}
\begin{split}
\Big\|\int_{\tau<t}\frac{\sin{\big((t-\tau)\sqrt{1+\Delta_g}\big)}}
{\sqrt{1+\Delta_g}}\varphi_0(\sqrt{\Delta_g})F(\tau)d\tau\Big\|_{L^2_tL^{r}_z}\lesssim \|F\|_{L^{2}_t{L}^{\tr'}_z},
\end{split}
\end{equation}

$\bullet$ High frequencies estimates
\begin{equation}\label{h-dinh}
\begin{split}
\Big\| \int_{\tau<t}\frac{\sin{\big((t-\tau)\sqrt{1+\Delta_g}\big)}}
{\sqrt{1+\Delta_g}}(1-\varphi_0)(\sqrt{\Delta_g})&F(\tau)d\tau\Big\|_{L^2_tL^{r}_z} \lesssim \|(1+\Delta_g)^{\frac{\alpha+\talpha}{2}} F\|_{L^{2}_t{L}^{\tr'}_z},
\end{split}
\end{equation}
where $\varphi_0\in C_c^\infty([0,\infty)$ such that $\varphi_0(\lambda) = 1$ for $\lambda \leq 1$ and vanishes when $\lambda\geq2$.  

\end{proposition}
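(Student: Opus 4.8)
The plan is to establish \eqref{l-dinh} and \eqref{h-dinh} separately, both by transplanting Keel--Tao's bilinear endpoint argument \cite{KT} into the microlocalized framework of \cite{HZ,ZZ2}. This detour is forced on us because at $q=\tilde q=2$ the Christ--Kiselev Lemma is unavailable (the strict inequality $q>\tilde q'$ of Lemma \ref{lem:qq'} degenerates to equality), and because conjugate points may be present so that no global dispersive estimate is at hand; only the microlocalized bounds of Propositions \ref{prop:lDispersive} and \ref{prop:hDispersive} are available. I would first check that Proposition \ref{prop:dinh} implies Proposition \ref{prop:inh}: writing $1=\varphi_0(\sqrt{\Delta_g})+(1-\varphi_0)(\sqrt{\Delta_g})$ and commuting $\varphi_0(\sqrt{\Delta_g})$, a function of $\Delta_g$, through the propagator, the operator in \eqref{inh} splits into a low- and a high-frequency part; on the high part \eqref{h-dinh} is exactly the asserted bound once $(1+\Delta_g)^{(\alpha+\talpha)/2}$ is moved to the right, while on the low part $(1+\Delta_g)^{-(\alpha+\talpha)/2}\varphi_0(\sqrt{\Delta_g})$ is a smooth compactly supported spectral multiplier of $\Delta_g$, hence bounded on $L^r$, so \eqref{l-dinh} suffices. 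The passage between the Lorentz spaces produced by the abstract machinery and the Lebesgue spaces in the statement is free, using $L^{r,2}\hookrightarrow L^{r}$ and $L^{\tilde r'}\hookrightarrow L^{\tilde r',2}$ for $r\ge 2$.

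To prove \eqref{h-dinh} I would first apply the Littlewood--Paley square function estimate of Proposition \ref{prop:square} together with Minkowski's inequality in $z$ (legitimate since $q=2\le r$) to reduce matters to the dyadic bound, uniform in $j\ge 1$,
\[
\Big\|\int_{\tau<t}\frac{\sin\big((t-\tau)\sqrt{1+\Delta_g}\big)}{\sqrt{1+\Delta_g}}\,\varphi(2^{-j}\sqrt{\Delta_g})F\,d\tau\Big\|_{L^2_tL^{r,2}_z}\lesssim 2^{j(\alpha+\talpha)}\big\|\tilde{\varphi}(2^{-j}\sqrt{\Delta_g})F\big\|_{L^2_tL^{\tr',2}_z},
\]
after which summing the squares over $j$ reassembles the factor $(1+\Delta_g)^{(\alpha+\talpha)/2}$ on the right. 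For fixed $j$ I set $h=2^{-j}$, distribute the factor $(1+\Delta_g)^{-1/2}$ as $(1+\Delta_g)^{-1/4}$ onto each leg of the $TT^{*}$ decomposition, insert the partition $\mathrm{Id}=\sum_k Q_k(\lambda)$ of Proposition \ref{prop:localized spectral measure}, and classify the pairs $(k,k')$ according to \eqref{cl-N}. On the blocks $(k,k')\in J_{near}$ and the blocks with $k=0$ or $k'=0$, the dispersive bound \eqref{h-} holds for all $t\ne\tau$; on $(k,k')\in J_{non-inc}$ it holds for $\tau<t$, precisely the region of integration. For each of these I would run Keel--Tao's bilinear endpoint argument directly, feeding Proposition \ref{prop:semi} with the $L^2$-bound of Proposition \ref{energy} (which supplies the energy exponent $\gamma=1/2$) and the dispersive bound \eqref{h-} with parameters $\alpha_\star=(n+1+\theta)/2$, $\beta_\star=(n-1+\theta)/2$; invoking \eqref{h-} with the parameter $\theta$ against one factor of the bilinear form and with $\ttheta$ against the other produces the mixed endpoint exponents $r$ and $\tr$ with respective losses $h^{-\alpha}$ and $h^{-\talpha}$, so the power $h^{-(\alpha+\talpha)}=2^{j(\alpha+\talpha)}$ in the displayed bound is exactly what comes out. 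The only blocks on which \eqref{h-} fails for $\tau<t$ are $(k,k')\in J_{non-out}$; there I would write $\int_{\tau<t}=\int_{\R}-\int_{\tau\ge t}$, estimate the full integral by $TT^{*}$ and the (already proved) homogeneous and dual-homogeneous Strichartz bounds for the diagonal blocks $(k,k),(k',k')\in J_{near}$, and estimate $\int_{\tau\ge t}$ by the same bilinear argument, now invoking \eqref{h-} in its case $(k,k')\in J_{non-out}$, $t<\tau$.

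The proof of \eqref{l-dinh} follows the same template with $\Delta_g$ replaced by its low-frequency truncation. Here there is no derivative loss, the relevant dispersion rate is the Schr\"odinger-type $(1+|t-\tau|)^{-n/2}$ of Proposition \ref{prop:lDispersive}, the associated endpoint Lebesgue exponent is $2n/(n-2)$, and since $r=2(n-1+\theta)/(n-3+\theta)\ge 2n/(n-2)$ for $0\le\theta\le1$ the pair $(2,r)$ lies in the admissible range of Proposition \ref{prop:semi} (away from the degenerate case $(n,\theta)=(3,0)$, where one must take $\theta>0$). One decomposes $\varphi_0(\sqrt{\Delta_g})=\sum_k\varphi_0(\sqrt{\Delta_g})Q_k(\lambda)$, classifies $(k,k')$ as before, and runs the bilinear endpoint argument using the three cases of Proposition \ref{prop:lDispersive}, Proposition \ref{energy'} for the $L^2$ input, and the same $\int_{\R}-\int_{\tau\ge t}$ device for the $J_{non-out}$ blocks.

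The hard part is the bilinear endpoint estimate itself rather than its transplantation. Even granted the dispersive and $L^2$ inputs, the dyadic-in-time pieces of the bilinear form (with $t-\tau\sim 2^\ell$) are each controlled by $\|F\|\,\|G\|$ only uniformly in $\ell$, never summably, and recovering the sum requires Keel--Tao's interpolation-and-duality argument, which extracts a genuine off-diagonal gain by displacing the exponents slightly off $(1/r,1/r)$. One must then check that this gain coexists with the microlocal restrictions --- that for each block $(k,k')$ a dispersive bound with the requisite off-diagonal margin is available in the pertinent time order --- and, in particular, justify the $\int_{\R}-\int_{\tau\ge t}$ reduction for the $J_{non-out}$ blocks with constants uniform in $j$. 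This bookkeeping, not any new analytic device, is where the work concentrates; the low/high frequency separation is unavoidable because the Klein--Gordon propagator disperses at the Schr\"odinger rate $n/2$ at low frequency and at the wave-type rate $(n-1+\theta)/2$ at high frequency.
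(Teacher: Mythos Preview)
Your proposal is correct and follows essentially the same route as the paper: microlocalize via the $Q_k$, classify pairs $(k,k')$ according to \eqref{cl-N}, run the Keel--Tao bilinear endpoint argument on the time half-plane where the relevant dispersive bound from Propositions \ref{prop:lDispersive} and \ref{prop:hDispersive} is available, and for the $J_{non-out}$ blocks recover $\int_{\tau<t}$ by subtracting the $\int_{\tau>t}$ bound from the full $\int_{\R}$ bound supplied by the homogeneous microlocalized Strichartz estimates (Propositions \ref{l-mic-Strichartz} and \ref{h-mic-Strichartz}) via $TT^*$. The paper packages the derivative loss slightly differently, absorbing $(1+\Delta_g)^{-(\alpha+\talpha)/2}$ into the weights $\sigma=\tfrac12+\alpha$, $\tsigma=\tfrac12+\talpha$ so that the dyadic constant is $1$ rather than $2^{j(\alpha+\talpha)}$, but this is just bookkeeping.
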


\begin{proof} 
Recalling that $U(t)=e^{it\sqrt{1+\Delta_g}}$, we have
\begin{equation}\label{sin-UU}
\begin{split}
\frac{\sin{\big((t-\tau)\sqrt{1+\Delta_g}\big)}}
{\sqrt{1+\Delta_g}}=\langle \sqrt{\Delta_g} \rangle^{-1}(U(t)U(\tau)^*-U(-t)U(-\tau)^*)/2i.
\end{split}
\end{equation}

We first prove \eqref{l-dinh}. We only estimate the term involving $\varphi_0(\sqrt{\Delta_g})U(t)U(\tau)^*$ since the other one follows from the same argument. To this end, we write
\begin{equation*}
\begin{split}
\langle \sqrt{\Delta_g} \rangle^{-1}\varphi_0(\sqrt{\Delta_g})U(t)U(\tau)^*
=\leftidx{^{\sigma}}{\tilde U}_{\mathrm{low}}(t)(\leftidx{^{\sigma}}{\tilde U}_{\mathrm{low}}(\tau))^*,
\end{split}
\end{equation*}
where 
\begin{equation}\label{U-l}
\leftidx{^{\sigma}}{\tilde U}_{\mathrm{low}}(t) = \int_0^\infty e^{it\sqrt{1+\lambda^2}} \varphi_0^{1/2}(\lambda)\langle \lambda\rangle^{-\sigma}
dE_{\sqrt{\Delta_g}}(\lambda),\quad \sigma=1/2.
\end{equation}
For the partition of identity operator $Q_k(\lambda)$ in Proposition \ref{prop:localized spectral measure}, we further define
\begin{equation}\label{tU-l-mic}
\begin{gathered}
\leftidx{^{\sigma}}{\tilde U}_{\mathrm{low},k}(t)  = \int_0^\infty e^{it\sqrt{1+\lambda^2}} \varphi^{1/2}_0(\lambda)\langle \lambda\rangle^{-\sigma}Q_{k}(\lambda)
dE_{\sqrt{\Delta_g}}(\lambda), \quad 0\leq k\leq N.
\end{gathered}
\end{equation}
Compared with $\leftidx{^{\sigma}}{U}_{\mathrm{low},k}(t)$ defined in \eqref{U-l-mic}, we replace 
$\varphi_0$ by $\varphi_0^{1/2}$ respectively. The differences are harmless in view of obtaining Proposition \ref{prop:lDispersive}, so we drop off the tilde from now on.

To prove \eqref{l-dinh}, due to the finiteness of $N$, it is enough to show the bilinear form estimate
\begin{equation}\label{lTFG}
|T(F,G)|\lesssim  \|F\|_{L^{2}_tL^{\tr',2}_z}\|G\|_{L^{2}_tL^{r',2}_z},
\end{equation}
where $T(F,G)$ is the bilinear form
\begin{equation*}
T(F,G)=\iint_{\tau<t}\langle \leftidx{^{\sigma}}U_{\mathrm{low},k}(t)(\leftidx{^{\sigma}}U_{\mathrm{low},k'}(\tau))^*F(\tau), G(t)\rangle_{L^2}~ d\tau dt.
\end{equation*}
We need the following bilinear estimates

\begin{lemma}\label{l-either} Let $\leftidx{^{\sigma}}U_{\mathrm{low},k}(t)$ be defined as in \eqref{U-l-mic} with $\sigma=1/2$, then for each pair $(k,k')\in \{ 0, 1, \dots, N \}^2 $
there exists a constant $C$ such that,  either
\begin{equation}\label{l-bilinear:s<t}
\iint_{\tau<t}\langle  \leftidx{^{\sigma}}U_{\mathrm{low},k}(t)(\leftidx{^{\sigma}}U_{\mathrm{low},k'}(\tau))^*F(\tau), G(t)\rangle_{L^2}~ d\tau dt\leq C 
\|F\|_{L^2_\tau L^{\tr',2}_z}\|G\|_{L^2_tL^{r',2}_z},
\end{equation}
or
\begin{equation}\label{l-bilinear:s>t}
\iint_{\tau>t}\langle  \leftidx{^{\sigma}}U_{\mathrm{low},k}(t)(\leftidx{^{\sigma}}U_{\mathrm{low},k'}(\tau))^*F(\tau), G(t)\rangle_{L^2}~ d\tau dt\leq C 
\|F\|_{L^2_\tau L^{\tr',2}_z}\|G\|_{L^2_tL^{r',2}_z}.
\end{equation}
\end{lemma}

We postpone the proof at the end of this section. Note that $\sigma$ plays no role in the low frequency estimates. In Proposition \ref{l-mic-Strichartz}, we have proved 
\begin{equation}\label{s-l'}
\begin{split}
\|\leftidx{^{\sigma}} U_{\mathrm{low},k} u_0\|_{L^q(\R;L^{r,2}(X))}\leq C\|u_0\|_{L^2(X)}, \quad2/q\leq n(1/2-1/r).
\end{split}
\end{equation}
In particular, we take $(q,r)=(2, \frac{2(n-1+\theta)}{n-3+\theta})$ and $(\tilde{q},\tr)=(2, \frac{2(n-1+\ttheta)}{n-3+\ttheta})$.  By duality, we have
\begin{equation*}
\Big\|\int_{\R}\leftidx{^{\sigma}}U_{\mathrm{low},k}(t)(\leftidx{^{\sigma}}U_{\mathrm{low},k'}(\tau))^*F(\tau)d\tau\Big\|_{L^2_tL^{r,2}_z}\lesssim \|F\|_{L^{2}_\tau L^{\tr',2}_z}\end{equation*}
for all  $0\leq k,k'\leq N$. For all $0\leq k,k'\leq N$, thus it follows that 
\begin{equation}\label{R2}
\iint_{\R^2}\langle \leftidx{^{\sigma}}U_{\mathrm{low},k}(t)(\leftidx{^{\sigma}}U_{\mathrm{low},k'}(\tau))^*F(\tau), G(t)\rangle_{L^2}~ d\tau dt\leq
C \|F\|_{L^2_\tau L^{\tr',2}_z}\|G\|_{L^2_tL^{r',2}_z}.
\end{equation}
Hence for every pair $(k,k')$, we have by \eqref{l-bilinear:s<t} or subtracting \eqref{l-bilinear:s>t} from \eqref{R2} 
\begin{equation*}
\iint_{\tau<t}\langle \leftidx{^{\sigma}}U_{\mathrm{low},k}(t)(\leftidx{^{\sigma}}U_{\mathrm{low},k'}(\tau))^*F(\tau), G(t)\rangle_{L^2}~ d\tau dt\leq C 
\|F\|_{L^2_\tau L^{\tr',2}_z}\|G\|_{L^2_tL^{r',2}_z}.
\end{equation*}
Hence we obtain \eqref{lTFG} once we have proved Lemma \ref{l-either}.\vspace{0.2cm}

We next prove \eqref{h-dinh}. By using \eqref{sin-UU} again, we only estimate the term involving $(1-\varphi_0)(\sqrt{\Delta_g})U(t)U(\tau)^*$ since the other term follows the same argument. To this end, we write
\begin{equation*}
\begin{split}
\langle \sqrt{\Delta_g} \rangle^{-(1+\alpha+\talpha)}(1-\varphi_0)(\sqrt{\Delta_g})U(t)U(\tau)^*
=\sum_{j\geq 1} \leftidx{^{\sigma}}{\tilde U}_{j}(t)(\leftidx{^{\tsigma}}{\tilde U}_{j}(\tau))^*,
\end{split}
\end{equation*}
where $\sigma=1/2+\alpha$, $\tsigma=1/2+\talpha$ and
\begin{equation}\label{U-h}
\leftidx{^{\sigma}}{\tilde U}_{j}(t) = \int_0^\infty e^{it\sqrt{1+\lambda^2}} \varphi^{1/2}(2^{-j}\lambda)\langle \lambda\rangle^{-\sigma}
dE_{\sqrt{\Delta_g}}(\lambda).
\end{equation}
Using $Q_k(\lambda)$ in Proposition \ref{prop:localized spectral measure}, we further define
\begin{equation}\label{tU-mic}
\begin{gathered}
\leftidx{^{\sigma}}{\tilde U}_{j,k}(t) = \int_0^\infty e^{it\sqrt{1+\lambda^2}} \varphi^{1/2}(2^{-j}\lambda)\langle \lambda\rangle^{-\sigma}Q_{k}(\lambda)
dE_{\sqrt{\Delta_g}}(\lambda),~j\geq0, \,  0\leq k\leq N.
\end{gathered}
\end{equation}
Thus to prove \eqref{h-dinh}, it suffices to show
\begin{equation}\label{h-dinh'}
\begin{split}
\sum_{0\leq k,k'\leq N}\Big\|\sum_{j\geq 1}\int_{\tau<t} \leftidx{^{\sigma}}{\tilde U}_{j,k}(t)(\leftidx{^{\tsigma}}{\tilde U}_{j,k'}(\tau))^* F(\tau)d\tau\Big\|_{L^2_tL^{r,2}_z}\lesssim \|F\|_{L^{2}_t{L}^{\tr',2}_z}.
\end{split}
\end{equation}
Compared with $\leftidx{^{\sigma}}{U}_{j,k}(t)$ defined in \eqref{U-mic}, we replace 
$\varphi$ by $\varphi^{1/2}$. The differences are harmless to obtain Proposition \ref{prop:hDispersive}, so we also drop off the tilde on $U_{j,k}$ from now on.

The argument is almost the same as the above argument for low frequencies.
In order to show \eqref{h-dinh}, by using the Littlewood-Paley theory in Lemma \ref{prop:square}, it suffices to show the bilinear form estimate
\begin{equation}\label{hTFG}
|T_{j}(F,G)|\leq C \|F_j\|_{L^{2}_tL^{\tr',2}_z}\|G_j\|_{L^{2}_tL^{r',2}_z},
\end{equation}
where $F_j=\varphi(2^{-j}\sqrt{\Delta_g})F$, $G_j=\varphi(2^{-j}\sqrt{\Delta_g})G$ and $T_j(F,G)$ is the bilinear form
\begin{equation*}
T_{j}(F,G)=\iint_{s<t}\langle \leftidx{^{\sigma}}{ U}_{j,k}(t)(\leftidx{^{\tsigma}}{ U}_{j,k'}(\tau))^* F(\tau), G(t)\rangle_{L^2}~ d\tau dt
\end{equation*}
where $\leftidx{^{\sigma}} U_{j,k}$ defined in \eqref{U-mic}. In Proposition \ref{h-mic-Strichartz}, we have proved
\begin{equation*}
\|\leftidx{^{\sigma}} U_{j,k}f\|_{L^2_t(\R:L^{r,2}(X))}\lesssim 2^{j[(n+\theta)(\frac12-\frac1r)-\frac12-\sigma]}\|f\|_{L^2(X)}
\end{equation*}
and
\begin{equation*}
\|\leftidx{^{\tsigma}} U_{j,k}f\|_{L^2_t(\R:L^{\tr,2}(X))}\lesssim 2^{j[(n+\ttheta)(\frac12-\frac1\tr)-\frac12-\tsigma]}\|f\|_{L^2(X)}.
\end{equation*}
Recall $r=\frac{2(n-1+\theta)}{n-3+\theta}$ and $\sigma=\frac12+\frac1{n-1+\theta}$, thus $(n+\theta)(\frac12-\frac1r)-\frac12-\sigma=0$ which also occurs in the case of $\tr$ and $\tsigma$. 
By duality, for all $0\leq k,k'\leq N$, we have
\begin{equation*}
\Big\|\int_{\R}\leftidx{^{\sigma}}{ U}_{j,k}(t)(\leftidx{^{\tsigma}}{ U}_{j,k'}(\tau))^*F(\tau)d\tau\Big\|_{L^2_tL^{r,2}_z}\lesssim \|F\|_{L^{2}_\tau L^{\tr',2}_z}.
\end{equation*}
which implies
\begin{equation}\label{h-R2}
\iint_{\R^2}\langle \leftidx{^{\sigma}}{ U}_{j,k}(t)(\leftidx{^{\tsigma}}{ U}_{j,k'}(\tau))^*F(\tau), G(t)\rangle_{L^2}~ d\tau dt\leq
C \|F\|_{L^2_\tau L^{\tr',2}_z}\|G\|_{L^2_tL^{r',2}_z}.
\end{equation}

To show \eqref{hTFG}, we need the following bilinear estimates

\begin{lemma}\label{h-either} Let $\theta,\ttheta, r,\tr, \alpha,\talpha$ be in Proposition \ref{prop:inh} and let $\leftidx{^{\sigma}}{ U}_{j,k}(t)$ and
$\leftidx{^{\tsigma}}{ U}_{j,k}(t)$ be defined as in \eqref{U-mic} with $\sigma=\frac12+\alpha$ and $\tsigma=\frac12+\talpha$.  Then for each pair $(k,k')\in \{ 0, 1, \dots, N \}^2 $
there exists a constant $C$ such that, for each $j$, either
\begin{equation}\label{bilinear:s<t}
\iint_{\tau<t}\langle \leftidx{^{\sigma}}{ U}_{j,k}(t)(\leftidx{^{\tsigma}}{ U}_{j,k'}(\tau))^*F(\tau), G(t)\rangle_{L^2}~ d\tau dt\leq C
\|F\|_{L^2_\tau L^{\tr',2}_z}\|G\|_{L^2_tL^{r',2}_z},
\end{equation}
or
\begin{equation}\label{bilinear:s>t}
\iint_{\tau>t}\langle \leftidx{^{\sigma}}{ U}_{j,k}(t)(\leftidx{^{\tsigma}}{ U}_{j,k'}(\tau))^*F(\tau), G(t)\rangle_{L^2}~ d\tau dt\leq C 
\|F\|_{L^2_\tau L^{\tr',2}_z}\|G\|_{L^2_tL^{r',2}_z}.
\end{equation}
\end{lemma}
We postpone the proof for a moment. Hence for every pair $(k,k')$, we have by \eqref{bilinear:s<t} or subtracting \eqref{bilinear:s>t} from \eqref{h-R2} 
\begin{equation*}
\iint_{\tau<t}\langle \leftidx{^{\sigma}}{ U}_{j,k}(t)(\leftidx{^{\tsigma}}{ U}_{j,k'}(\tau))^*F(\tau), G(t)\rangle_{L^2}~ d\tau dt\leq C
\|F\|_{L^2_\tau L^{\tr',2}_z}\|G\|_{L^2_tL^{r',2}_z}.
\end{equation*}
Therefore, proving Lemma \ref{h-either} completes the proof of \eqref{h-dinh}.
\end{proof}

\begin{proof}[Proof of Lemma~\ref{l-either} and Lemma~ \ref{h-either}] By \eqref{cl-N}, we need to consider three cases. We first prove Lemma~\ref{l-either}. In the case that $(k,k') \in J_{near}$ or $(k,k')=(k,0)$ or $(k,k')=(0,k')$,
we have the dispersive estimate \eqref{l-near}. We 
apply the argument of \cite[Sections 4--7]{KT} to obtain
\eqref{l-bilinear:s<t}. If $(k,k') \in J_{non-out}$, we
obtain \eqref{l-bilinear:s<t} adapting the argument in \cite{KT}
due to the dispersive estimate \eqref{l-nonout} when $\tau > t$.
Finally, in the case that $(k,k') \in J_{non-inc}$, we obtain
\eqref{l-bilinear:s>t} since we have the dispersive estimate
\eqref{l-nonin} for $\tau< t$. We mention here that we have sharpened the inequality to the Lorentz norm by the interpolation as remarked in \cite[Section 6 and Section 10]{KT}.

The same argument works for Lemma~\ref{h-either} by using the dispersive estimates \eqref{h-near}, \eqref{h-nonout} and \eqref{h-nonin}.
\end{proof}

\section{Local-smoothing estimates for Klein-Gordon}\label{sec:6}

In order to prove Strichartz estimates for the Klein-Gordon equation associated to $\LL_V$, we need a global-in-time local-smoothing estimate, that we prove in this section.
The local-smoothing estimate allows us to use the (standard) perturbation argument to treat the potential term. We stress the fact that we will need to deal separately with low and high frequencies when applying this method to the Klein-Gordon equation.

\subsection{Preliminaries: basic analysis on metric cone}
We consider the operator $\widetilde{\Delta}_h:=\Delta_h+V_0(y)$ with $V_0(y)\in\mathcal{C}^\infty(Y)$, on the closed Riemannian manifold $Y$. It is well known that 
the eigenvalues $\{\lambda_j\}_{j=0}^\infty$ of the Schr\"odinger operator $\widetilde{\Delta}_h$ form a discrete set and that the sequence 
$$\lambda_0<\lambda_1<\cdots<\lambda_j<\cdots \to \infty.$$ 
Let $d(\lambda_j)$ be the multiplicity of
the eigenvalue $\lambda_j$, and let $\{\varphi_{\lambda_j,\ell}(y)\}_{1\leq
\ell\leq d(\lambda_j)}$ be the eigenfunctions of $\widetilde{\Delta}_h$, that is
\begin{equation}
(\Delta_h+V_0(y))\varphi_{\lambda_j,\ell}(y)=\lambda_j \varphi_{\lambda_j,\ell}(y).
\end{equation}
This of course implies that
\begin{equation*}
(\Delta_h+V_0(y)+(n-2)^2/4)\varphi_{\lambda_j,\ell}(y)=(\lambda_j+(n-2)^2/4) \varphi_{\lambda_j,\ell}(y).
\end{equation*}
By the assumption that the operator $\Delta_h+V_0(y)+(n-2)^2/4$ is strictly positive, we have that $\nu_j^2:=\lambda_j+(n-2)^2/4>0$.  From now on we will drop the subscript $j$ in order to keep the notation simple, and 
define the set $\chi_\infty$ to be
\begin{equation}\label{set1}
\chi_\infty=\Big\{\nu: \nu=\sqrt{(n-2)^2/4+\lambda};~
\lambda~\text{is eigenvalue
of}~ \Delta_h+V_0(y)\Big\}.
\end{equation}
For $\nu\in\chi_\infty$, let $d(\nu)$ be the multiplicity of the eigenvalue $\nu^2$ of 
$\widetilde{\Delta}_h+(n-2)^2/4$ and let $\{\varphi_{\nu,\ell}(y)\}_{1\leq
\ell\leq d(\nu)}$ be the eigenfunctions of $\widetilde{\Delta}_h+(n-2)^2/4$. Then $d(\nu)=d(\lambda)$ and $\{\varphi_{\nu,\ell}(y)\}_{1\leq
\ell\leq d(\nu)}=\{\varphi_{\lambda,\ell}(y)\}_{1\leq
\ell\leq d(\lambda)}$.
Let $\mathcal{H}^{\nu}=\text{span}\{\varphi_{\nu,1},\ldots,
\varphi_{\nu,d(\nu)}\}$; we decompose $L^2(Y)$ as
\begin{equation*}
L^2(Y)=\bigoplus_{\nu\in\chi_\infty} \mathcal{H}^{\nu}.
\end{equation*} For $f\in L^2(X)$, we define the orthogonal projection $\pi_{\nu}:L^2(X)\to \mathcal{H}^{\nu}$
\begin{equation*}
\pi_{\nu}f= \sum_{\ell=1}^{d(\nu)}\varphi_{\nu,\ell}(y) a_{\nu,\ell}(r), \quad a_{\nu,\ell}(r)=\int_{Y}f(r,y)
\varphi_{\nu,\ell}(y) dh,
\end{equation*}
then we can write $f$ by separating variables as
\begin{equation}\label{sep.v}
f(z)=\sum_{\nu\in\chi_\infty}\pi_{\nu}f
=\sum_{\nu\in\chi_\infty}\sum_{\ell=1}^{d(\nu)}a_{\nu,\ell}(r)\varphi_{\nu,\ell}(y)
\end{equation}
so that
\begin{equation}\label{norm1}
\|f(z)\|^2_{L^2(Y)}=\sum_{\nu\in\chi_\infty}\sum_{\ell=1}^{d(\nu)}|a_{\nu,\ell}(r)|^2.
\end{equation}
Note that as the Riemannian metric $h$ on $Y$ is independent on $r$,  we can use the separation of variable method \cite{CT} to write $\mathcal{L}_V$  with respect to the coordinates $(r,y)$ as
\begin{equation}\label{operator-t}
\mathcal{L}_V=-\partial^2_r-\frac{n-1}r\partial_r+\frac1{r^2}\big(\Delta_h+V_0(y)\big).
\end{equation} 
Then, on each space $\mathcal{H}^{\nu}$, the action of the operator is given by
\begin{equation}\label{2.8}
\begin{split}
\LL_V=A_{\nu}:=-\partial_r^2-\frac{n-1}r\partial_r+r^{-2}\big(\nu^2-\frac14{(n-2)^2}\big).
\end{split}
\end{equation}
A crucial role in our argument will be played by the well known Hankel transform of order $\nu$: for $f\in L^2(X)$, we set
\begin{equation}\label{hankel}
(\mathcal{H}_{\nu}f)(\rho,y)=\int_0^\infty(r\rho)^{-\frac{n-2}2}J_{\nu}(r\rho)f(r,y)r^{n-1}dr,
\end{equation}
where the Bessel function of order $\nu$ is given by
\begin{equation}\label{Bessel}
J_{\nu}(r)=\frac{(r/2)^{\nu}}{\Gamma\left(\nu+\frac12\right)\Gamma(1/2)}\int_{-1}^{1}e^{isr}(1-s^2)^{(2\nu-1)/2} ds, \quad \nu>-1/2, r>0.
\end{equation}

We now recall the following very useful inequalities for the Bessel function (we refer for them to \cite{watson, MZZ})

\begin{lemma}\label{lem: J} Let $J_\nu(r)$ be the Bessel function defined in \eqref{Bessel}, then there exists a constant $C$ independent of $\nu$ such that

\begin{equation}\label{bessel-r}
|J_\nu(r)|\leq
\frac{Cr^\nu}{2^\nu\Gamma(\nu+\frac12)\Gamma(1/2)}\left(1+\frac1{\nu+1/2}\right).
\end{equation}
Moreover, if $R\gg1$, there exists a constant $C$ independent of $\nu$ and $R$ such that
\begin{equation}\label{est:b}
\int_{R}^{2R} |J_\nu(r)|^2 dr \leq C.
\end{equation}
\end{lemma}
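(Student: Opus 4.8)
The plan is to treat the two inequalities separately. The first one is elementary and follows directly from the Poisson-type integral representation \eqref{Bessel}; the second one is only delicate because of the required uniformity in $\nu$, which forces the use of the genuinely uniform (Airy-type) asymptotics of $J_\nu$ across the turning point $r\approx\nu$.

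\emph{The bound \eqref{bessel-r}.} Starting from \eqref{Bessel} and estimating $|e^{isr}|\le 1$,
\[
|J_\nu(r)|\le \frac{(r/2)^{\nu}}{\Gamma(\nu+\tfrac12)\Gamma(1/2)}\int_{-1}^{1}(1-s^2)^{\frac{2\nu-1}{2}}\,ds ,
\]
so it suffices to bound $I(\nu):=\int_{-1}^{1}(1-s^2)^{(2\nu-1)/2}\,ds$ by $2\big(1+\tfrac1{\nu+1/2}\big)$. If $\nu\ge \tfrac12$ the exponent is nonnegative, hence $(1-s^2)^{(2\nu-1)/2}\le1$ and $I(\nu)\le 2$. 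If $-\tfrac12<\nu<\tfrac12$ the exponent is negative; since $1-s^2=(1-|s|)(1+|s|)\ge 1-|s|$ and $x\mapsto x^{(2\nu-1)/2}$ is decreasing, $(1-s^2)^{(2\nu-1)/2}\le (1-|s|)^{(2\nu-1)/2}$, so $I(\nu)\le 2\int_0^1(1-u)^{(2\nu-1)/2}\,du=\tfrac{2}{\nu+1/2}$. In either range $I(\nu)\le 2\big(1+\tfrac1{\nu+1/2}\big)$, which gives \eqref{bessel-r} with $C=2$.

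\emph{The bound \eqref{est:b}.} For fixed $\nu$ this is immediate from $J_\nu(r)=\sqrt{2/(\pi r)}\cos(r-\nu\pi/2-\pi/4)+O(r^{-3/2})$, so the content is uniformity in $\nu$. I would split $[R,2R]=\big([R,2R]\cap(0,\nu)\big)\cup\big([R,2R]\cap[\nu,\infty)\big)$. On the oscillatory side $r\ge\nu$, use the classical uniform estimate $|J_\nu(r)|^2\le C(r^2-\nu^2)^{-1/2}$ for $r>\nu\ge0$ (which still has an $L^1_{\mathrm{loc}}$ singularity at $r=\nu$); distinguishing $R\ge 2\nu$ (then $r^2-\nu^2\gtrsim r^2$ and $\int_R^{2R}|J_\nu|^2\lesssim\int_R^{2R}dr/r=\log 2$) from $\nu\le R<2\nu$ (then $\int_{\max(R,\nu)}^{2R}(r^2-\nu^2)^{-1/2}\,dr=\big[\log(r+\sqrt{r^2-\nu^2})\big]_{\max(R,\nu)}^{2R}\le \log 4$), one gets an absolute bound. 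On the side $r<\nu$ (which, since $R\gg1$, forces $\nu$ large), substitute $r=\nu\operatorname{sech}\alpha$, so $dr=-\nu\operatorname{sech}\alpha\tanh\alpha\,d\alpha$ with $\operatorname{sech}\alpha\le1$, and use Watson's uniform bound $|J_\nu(\nu\operatorname{sech}\alpha)|\le C\,e^{-\nu(\alpha-\tanh\alpha)}(\nu\tanh\alpha)^{-1/2}$; the contribution is then
\[
\lesssim \int_0^{\infty} e^{-2\nu(\alpha-\tanh\alpha)}\,d\alpha
\lesssim \int_0^1 e^{-c\nu\alpha^3}\,d\alpha+\int_1^\infty e^{-c\nu\alpha}\,d\alpha
\lesssim \nu^{-1/3}\le C,
\]
using $\alpha-\tanh\alpha\gtrsim\alpha^3$ for bounded $\alpha$ and $\gtrsim\alpha$ for large $\alpha$. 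Summing the two contributions gives \eqref{est:b}.

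\emph{Main obstacle.} The only real difficulty is the uniformity in $\nu$ near the turning point $r\sim\nu$, where the fixed-$\nu$ asymptotics degenerate (one has merely $|J_\nu|\sim \nu^{-1/3}$ there): the argument must invoke the genuinely uniform estimates — the packaged bound $|J_\nu(r)|^2\lesssim(r^2-\nu^2)^{-1/2}$ above the turning point, whose $r=\nu$ singularity remains integrable, and the exponential bound in the variable $\alpha$ below it. The hypothesis $R\gg1$ is used precisely to guarantee that $\nu$ is large whenever $[R,2R]$ reaches below the turning point, so that the Stirling/exponential smallness of $J_\nu$ for $r\ll\nu$ is available; everything else is a short computation, and both ingredients are recorded in \cite{watson, MZZ}.
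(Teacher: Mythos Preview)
The paper does not give its own proof of this lemma; it merely states the two inequalities and refers to \cite{watson, MZZ}. Your argument is correct and reconstructs exactly what those references contain. The proof of \eqref{bessel-r} from the Poisson integral is the standard one; for \eqref{est:b} you correctly identify the turning point $r\sim\nu$ as the only obstacle to uniformity and handle it with the envelope bound $|J_\nu(r)|^2\lesssim(r^2-\nu^2)^{-1/2}$ on the oscillatory side together with the Debye exponential bound on the side $r<\nu$, using $R\gg1$ to force $\nu$ large whenever the latter region is nonempty. One cosmetic point: your case split on the oscillatory integral lists only ``$R\ge2\nu$'' and ``$\nu\le R<2\nu$'', but the remaining possibility $R<\nu<2R$ is already covered by your formula with lower limit $\max(R,\nu)=\nu$, and the same $\log 4$ bound persists there since $2R<2\nu$.
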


For well-behaved functions $F$, by  \cite[(8.45)]{Taylor}, we have 
\begin{equation}\label{funct}
F(\mathcal{L}_V) g(r,y)=\sum_{\nu\in\chi_\infty}\sum_{\ell=1}^{d(\nu)} \varphi_{\nu,\ell}(y) \int_0^\infty F(\rho^2) (r\rho)^{-\frac{n-2}2}J_\nu(r\rho)b_{\nu,\ell}(\rho)\rho^{n-1} d\rho
\end{equation}
where $b_{\nu,\ell}(\rho)=(\mathcal{H}_{\nu}a_{\nu,\ell})(\rho)$ and $g(r,y)=\sum\limits_{\nu\in\chi_\infty}\sum\limits_{\ell=1}^{d(\nu)}a_{\nu,\ell}(r)~\varphi_{\nu,\ell}(y)$.

\subsection{Local smoothing estimates for Klein-Gordon}

We are now ready to state our result on the local-smoothing estimates.

\begin{proposition}\label{prop:loc} Let $u$ be a solution of \eqref{equ:KG} with $F=0$ and let $\nu_0$ be the positive square root of the smallest eigenvalue of $\Delta_h+V_0(y)+(n-2)^2/4$. Let $\varphi_0$ be given by \eqref{dp}. Then there exists a constant $C$
independent of $(u_0,u_1)$ such that:

$\bullet$ (Low frequency estimate) for $\beta\in [1, 1+\nu_0)$
\begin{equation}\label{l-local-s}
\begin{split}
\|r^{-\beta}\varphi_0(\sqrt{\LL_V}) u(t,z)\|_{L^2_t(\R;L^2(X))}\leq
C\left(\|u_0\|_{L^{2}(X)}+\|u_1\|_{L^{2}(X)}\right),
\end{split}
\end{equation}

$\bullet$ (High frequency estimate) for $\beta\in (1/2, 1+\nu_0)$
\begin{equation}\label{h-local-s}
\begin{split}
\|r^{-\beta}(1-\varphi_0)(\sqrt{\LL_V}) u(t,z)\|_{L^2_t(\R;L^2(X))}\leq
C\left(\|u_0\|_{H^{\beta-\frac12}(X)}+\|u_1\|_{H^{\beta-\frac32}(X)}\right),
\end{split}
\end{equation}
where $z=(r,y)\in X$.
\end{proposition}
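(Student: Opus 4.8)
The plan is to reduce both estimates to one-dimensional (in $r$) local-smoothing bounds for the operators $A_\nu$ in \eqref{2.8}, obtained by separating variables via \eqref{sep.v}--\eqref{funct}, and then sum over $\nu\in\chi_\infty$ using orthonormality \eqref{norm1}. Write the solution of \eqref{equ:KG} with $F=0$ as $u(t)=\cos(t\sqrt{1+\LL_V})u_0+\frac{\sin(t\sqrt{1+\LL_V})}{\sqrt{1+\LL_V}}u_1$, and note that $\cos$ and $\sin/\sqrt{\phantom{x}}$ are bounded functions of $\sqrt{1+\LL_V}$ up to the weights appearing in the Sobolev norms, so it suffices to prove the local-smoothing bound for $e^{\pm it\sqrt{1+\LL_V}}$ composed with the Littlewood--Paley cutoff. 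After projecting onto $\mathcal H^\nu$ and applying the Hankel transform $\mathcal H_\nu$ of \eqref{hankel}, the propagator acts as a Fourier-type multiplier $e^{it\sqrt{1+\rho^2}}$ against $(r\rho)^{-\frac{n-2}2}J_\nu(r\rho)$, so by Plancherel in $t$ (the Fourier transform in $t$ of $e^{it\sqrt{1+\rho^2}}g(\rho)$ concentrates on $\{\tau=\sqrt{1+\rho^2}\}$) the square of the left-hand side of \eqref{l-local-s} or \eqref{h-local-s} becomes
\begin{equation*}
\sum_{\nu\in\chi_\infty}\sum_{\ell=1}^{d(\nu)} \int_0^\infty r^{-2\beta}\, r^{n-1}\Big| \int_0^\infty \chi(\rho)(r\rho)^{-\frac{n-2}2}J_\nu(r\rho)\, c_{\nu,\ell}(\rho)\, \frac{\rho^{n-1}\,d\rho}{\sqrt{1+\rho^2}} \Big|^2 dr,
\end{equation*}
with $\chi=\varphi_0$ (low) or $\chi=1-\varphi_0$ (high) and $c_{\nu,\ell}$ built from the Hankel transforms of the data.

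The key one-dimensional estimate to establish is: for $\chi$ supported in $\rho\sim 2^j$ ($j\ge 0$) or in $\rho\lesssim1$,
\begin{equation*}
\big\| r^{-\beta}(r\rho)^{-\frac{n-2}2}\!\int_0^\infty J_\nu(r\rho)\, \rho^{\frac n2}\, G(\rho)\,d\rho\big\|_{L^2(r^{n-1}dr)} \lesssim 2^{j(\beta-\frac12)}\|G\|_{L^2(d\rho)}
\end{equation*}
in the high-frequency regime, and with $2^{j(\beta-\frac12)}$ replaced by a constant in the low-frequency regime when $\beta<1+\nu_0$. First I would rescale $r\rho=s$ to pull out the $\rho$-powers, reducing matters to the boundedness of the weighted operator $g\mapsto r^{-\beta}\int_0^\infty J_\nu(rs)g(s)\,s\,ds$ type expression on $L^2$; the weight $r^{-\beta}$ is integrable against $r^{n-1-2\beta}$ near $r=0$ precisely when $\beta<n/2$, but the sharp constraint comes from the behaviour of $J_\nu$ near $0$. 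Here \eqref{bessel-r} gives $|J_\nu(r)|\lesssim r^\nu/(2^\nu\Gamma(\nu+\tfrac12))$, and tracking this bound through the $r$-integral near the origin is what forces $\beta<1+\nu_0$ (the worst case being the lowest eigenvalue $\nu=\nu_0$): the factor $r^{-\beta}\cdot r^\nu$ is square-integrable against $r^{n-1-(n-2)}dr=r\,dr$ near $0$ iff $\beta<\nu+1$. For the tail $r\gtrsim 1$ one uses the $L^2$-averaging bound \eqref{est:b}, $\int_R^{2R}|J_\nu|^2\,dr\lesssim1$, summed dyadically, together with a $TT^*$/Schur-test argument or a direct appeal to the known one-dimensional local smoothing for the half-wave / Klein--Gordon propagator associated to $A_\nu$ (this is exactly the mechanism used in \cite{MZZ}). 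The difference between low and high frequencies is that at high frequency one only has the weaker decay $\beta>1/2$ available (loss of $2^{j(\beta-1/2)}$ derivatives, matching the $H^{\beta-1/2}$ norm on the right of \eqref{h-local-s}), obtained by the standard local-smoothing gain of half a derivative for the wave-type flow, whereas at low frequency the multiplier $\rho^{n-1}/\sqrt{1+\rho^2}\sim\rho^{n-1}$ is so strongly vanishing that no derivative loss occurs and one may take $\beta$ all the way up to (but not including) $1+\nu_0$, with the constraint $\beta\ge1$ ensuring $r^{-\beta}$ is not too singular relative to the available $\rho$-powers after rescaling.

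The main obstacle I anticipate is the uniformity in $\nu$: the constants in the Bessel estimates \eqref{bessel-r} and \eqref{est:b} are uniform in $\nu$, but the one-dimensional local-smoothing constant for $A_\nu$ must also be shown uniform in $\nu$ so that the sum $\sum_{\nu\in\chi_\infty}$ closes using only \eqref{norm1}; this requires a careful, $\nu$-uniform stationary-phase / oscillatory-integral analysis of $\int J_\nu(rs)e^{it\sqrt{1+s^2/\rho^2}}\cdots$, exploiting that the ``angular momentum'' $\nu$ enters $A_\nu$ only through the repulsive term $r^{-2}(\nu^2-(n-2)^2/4)$ with $\nu^2\ge\nu_0^2>0$, which only helps. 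A clean way to organize this is to first prove, uniformly in $\nu\ge\nu_0$ and dyadic $\rho\sim 2^j$, the resolvent-type bound $\|r^{-\beta}(A_\nu-(\rho+i0)^2)^{-1}r^{-\beta}\|_{L^2\to L^2}\lesssim 2^{j(2\beta-1)}$ (and its low-frequency analogue without the $2^{j(\cdot)}$ factor when $\beta<1+\nu_0$), using the explicit Hankel-transform diagonalization, and then pass from the resolvent bound to the space-time local-smoothing bound for $e^{it\sqrt{1+\LL_V}}$ by the standard $\mathrm{Im}$-of-resolvent / Kato-smoothing argument (Plancherel in $t$). Finally, reassembling the $\nu$-sum and the two frequency pieces, and converting the $\rho$-weights back into the inhomogeneous Sobolev norms $H^{\beta-1/2}$, $H^{\beta-3/2}$ via the functional calculus for $\LL_V$, yields \eqref{l-local-s} and \eqref{h-local-s}.
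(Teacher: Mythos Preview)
Your overall strategy---separate variables, diagonalize via the Hankel transform, apply Plancherel in $t$, and then control the resulting $r$-integral using the Bessel bounds \eqref{bessel-r} and \eqref{est:b}---is exactly the route the paper takes, and your identification of the origin of the constraint $\beta<1+\nu_0$ (square-integrability of $r^{-\beta}r^{\nu}$ against $r\,dr$ near $r=0$) is correct.

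Two points, however. First, your Plancherel step is written incorrectly: the Fourier transform in $t$ of $\int_0^\infty e^{it\sqrt{1+\rho^2}}h(\rho,r)\,d\rho$ is a \emph{pointwise} function of $\tau$ (supported on $\tau>1$), not an integral over $\rho$; after changing variables $\tau=\sqrt{1+\rho^2}$ the correct identity is
\[
\int_{\R}\Big|\int_0^\infty e^{it\sqrt{1+\rho^2}}h(\rho,r)\,d\rho\Big|^2 dt
= c\int_0^\infty |h(\rho,r)|^2\,\frac{\sqrt{1+\rho^2}}{\rho}\,d\rho,
\]
so the $\rho$-integral must sit \emph{outside} the absolute value, with Jacobian weight $\sqrt{1+\rho^2}/\rho$ rather than $1/\sqrt{1+\rho^2}$. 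This is precisely what makes the argument elementary: once Plancherel collapses the time integral, one is left with a purely static double integral in $(r,\rho)$ with no oscillation, and no stationary-phase, $TT^*$, Schur-test, or resolvent machinery is needed.

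Second, where you leave the $r$-analysis vague, the paper makes it completely explicit by a double dyadic decomposition: localize $\rho\sim 2^j$ (with $j\le 0$ for the low-frequency piece, $j\ge 0$ for the high), rescale $\rho\to 2^j\rho$, then decompose $r$ dyadically as $r\sim R\in 2^{\Z}$. The resulting blocks
\[
G_{\nu,\ell}(R,2^j)=\int_R^{2R}\int_0^\infty\big|(r\rho)^{-\frac{n-2}2}J_\nu(r\rho)\,b_{\nu,\ell}(2^j\rho)\big|^2\varphi^2(\rho)\,d\rho\,dr
\]
are bounded directly: $G\lesssim R^{2\nu-n+3}$ for $R\lesssim1$ via \eqref{bessel-r}, and $G\lesssim R^{-(n-2)}$ for $R\gg1$ via \eqref{est:b}, both uniformly in $\nu$. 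Summing $\sum_R R^{n-1-2\beta}G$ converges exactly when $\tfrac12<\beta<1+\nu_0$, and the remaining $j$-sum produces the factor $2^{2j(\beta-1)}$ (low) or $2^{2j(\beta-\frac12)}$ (high), which accounts for the $L^2$ versus $H^{\beta-\frac12}$ norms on the right. Your worry about $\nu$-uniformity is thus resolved automatically by the uniformity of the two Bessel lemmas; no separate resolvent or Kato-smoothing argument is required.
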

\begin{remark} The two estimates are quite natural as indeed, as mentioned, the solutions to the Klein-Gordon equation behave like the Schr\"odinger ones for low frequencies and like the wave ones for high frequencies. It is known from \cite{ZZ1,ZZ2} that, for $\beta\in (1/2, 1+\nu_0)$, for the Schr\"odinger equation the following estimate holds
\begin{equation*}
\|r^{-\beta}e^{it\LL_V}f\|_{L^2_t(\R;L^2(X))}\leq C\|f\|_{\dot H^{\beta-1}(X)}, 
\end{equation*}
while for the wave one we have
\begin{equation*}
 \|r^{-\beta}e^{it\sqrt{\LL_V}}f\|_{L^2_t(\R;L^2(X))}\leq C\|f\|_{\dot H^{\beta-\frac12}(X)}.
\end{equation*}
\end{remark}

\begin{remark}\label{rem:loc-s} The restriction from above $\beta<1+\nu_0$ can become more specific in the following cases: 

(i) If $V=0$, then $\nu_0>(n-2)/2$, hence $\beta<\frac n2$;

(ii) If the initial data satisfy the condition that the projection $\pi_{\nu}(u_0,u_1)$ vanishes when $\nu\leq k$, then the above results hold for $\beta<1+k$. 

\end{remark}

\begin{proof} We first prove \eqref{l-local-s}. Let $u^l(t,z)=\varphi_0(\sqrt{\LL_V})u(t,z)$, then we can write
\begin{equation}\label{sleq2}
\begin{split}
u^l(t,z)&=\cos(t\sqrt{1+\LL_V})u^l_0+\frac{\sin(t\sqrt{1+\LL_V})}{\sqrt{1+\LL_V}}u^l_1
\\&=\frac12\left(e^{it\sqrt{1+\LL_V}}+e^{-it\sqrt{1+\LL_V}}\right)u^l_0+\frac1{2i}\frac{\left(e^{it\sqrt{1+\LL_V}}-e^{-it\sqrt{1+\LL_V}}\right)}{\sqrt{1+\LL_V}}u^l_1
\end{split}
\end{equation}
where $u^l_0=\varphi_0(\sqrt{\LL_V})u_0$ and $u^l_1=\varphi_0(\sqrt{\LL_V})u_1$ (we are using the apex $l$ to recall that we are dealing with low frequencies).
We only consider the contributions of $e^{it\sqrt{1+\LL_V}}u^l_0$ and $\frac{e^{it\sqrt{1+\LL_V}}}{\sqrt{1+\LL_V}}u^l_1$ since the others follow from the same argument. 

We write the solution using the harmonic expansion. By \eqref{sep.v}, we write  
\begin{equation*}
\begin{split}
u_0(z)=\sum_{\nu\in\chi_\infty}\sum_{\ell=1}^{d(\nu)}a_{\nu,\ell}(r)\varphi_{\nu,\ell}(y), \quad  b_{\nu,\ell}(\rho)=(\mathcal{H}_{\nu}a_{\nu,\ell})(\rho),\\
u_1(z)=\sum_{\nu\in\chi_\infty}\sum_{\ell=1}^{d(\nu)}\tilde{a}_{\nu,\ell}(r)\varphi_{\nu,\ell}(y), \quad  \tilde{b}_{\nu,\ell}(\rho)=(\mathcal{H}_{\nu}\tilde{a}_{\nu,\ell})(\rho).
\end{split}
\end{equation*}
By using \eqref{funct} with $F(\rho)=e^{it\sqrt{1+\rho^2}}\varphi_0(\rho)$, we have
\begin{equation*}
\begin{split}
e^{it\sqrt{1+\LL_V}}u^l_0&=\sum_{\nu\in\chi_\infty}\sum_{\ell=1}^{d(\nu)}\varphi_{\nu,\ell}(y)\int_0^\infty(r\rho)^{-\frac{n-2}2}J_{\nu}(r\rho)e^{
it\sqrt{1+\rho^2}}\varphi_0(\rho) b_{\nu,\ell}(\rho)\rho^{n-1}d\rho
\end{split}
\end{equation*}
and we apply \eqref{funct} with $F(\rho)=e^{it\sqrt{1+\rho^2}}\varphi_0(\rho)(1+\rho^2)^{-1/2}$ to obtain
\begin{equation*}
\begin{split}
\frac{e^{it\sqrt{1+\LL_V}}}{\sqrt{1+\LL_V}}u^l_1&=\sum_{\nu\in\chi_\infty}\sum_{\ell=1}^{d(\nu)}\varphi_{\nu,\ell}(y)\int_0^\infty(r\rho)^{-\frac{n-2}2}J_{\nu}(r\rho)\frac{e^{
it\sqrt{1+\rho^2}}}{\sqrt{1+\rho^2}}\varphi_0(\rho)\tilde{b}_{\nu,\ell}(\rho)\rho^{n-1}d\rho.
\end{split}
\end{equation*}
We first consider $e^{it\sqrt{1+\LL_V}}u^l_0$. 
Using the orthogonality, one has
\begin{equation*}
\begin{split}
\int_{Y} \big|\sum_{\nu\in\chi_\infty}\sum_{\ell=1}^{d(\nu)}\varphi_{\nu,\ell}(y)J_{\nu}(r\rho)b_{\nu,\ell}(\rho)
\big|^2 dy=\sum_{\nu\in\chi_\infty}\sum_{\ell=1}^{d(\nu)}\big|J_{\nu}(r\rho)b_{\nu,\ell}(\rho).
\big|^2
\end{split}
\end{equation*}
Hence by using the Plancherel theorem w.r.t.  time $t$, we obtain 
\begin{equation*}
\begin{split}
&\|r^{-\beta}e^{it\sqrt{1+\LL_V}}u^l_0\|^2_{L^2_t(\R;L^2(X))}\\
&=\sum_{\nu\in\chi_\infty}\sum_{\ell=1}^{d(\nu)}\int_0^\infty\int_0^\infty\big|(r\rho)^{-\frac{n-2}2}J_{\nu}(r\rho)\varphi_0(\rho) b_{\nu,\ell}(\rho)
\rho^{n-1}\big|^2 \frac{\sqrt{1+\rho^2}}\rho d\rho\, r^{n-1-2\beta}dr.
\end{split}
\end{equation*}
We recall that $\varphi_0(\rho)=\sum_{j\leq0}\varphi(2^{-j}\rho)$. We use the decomposition to obtain 
\begin{equation}\label{scal-reduce}
\begin{split}
&\|r^{-\beta}e^{it\sqrt{1+\LL_V}}u^l_0\|^2_{L^2_t(\R;L^2(X))}\\
&\lesssim \sum_{\nu\in\chi_\infty}\sum_{\ell=1}^{d(\nu)}\sum_{j\leq 0}\int_0^\infty\int_0^\infty\big|(r\rho)^{-\frac{n-2}2}J_{\nu}(r\rho)b_{\nu,\ell}(\rho)
\rho^{n-1}\big|^2 \varphi^2(2^{-j}\rho) \frac{d\rho}{\rho} r^{n-1-2\beta}dr\\&\lesssim
\sum_{\nu\in\chi_\infty}\sum_{\ell=1}^{d(\nu)}\sum_{j\leq0}\sum_{R\in2^{\Z}}2^{j(n-2+2\beta)}R^{n-1-2\beta}G_{\nu,\ell}(R,2^j),
\end{split}
\end{equation}
where
\begin{equation}\label{def:G}
\begin{split}
G_{\nu,\ell}(R,2^j)=\int_{R}^{2R}\int_{0}^\infty\big|(r\rho)^{-\frac{n-2}2}J_{\nu}(r\rho)b_{\nu,\ell}(2^j\rho)
\big|^2 \varphi^2(\rho)d\rho  dr.\end{split}
\end{equation}
Then we have the following lemma.
\begin{lemma} Let $G_{\nu,\ell}(R,2^j)$ be as in \eqref{def:G}, then
\begin{equation}\label{est:G}
G_{\nu,\ell}(R,2^j) \lesssim
\begin{cases}
R^{2\nu-n+3}2^{-nj}\|b_{\nu,\ell}(\rho)\varphi(2^{-j}\rho)\rho^{\frac{n-1}2}\|^2_{L^2},~
R\lesssim  1\\
R^{-(n-2)}2^{-nj}\|b_{\nu,\ell}(\rho)\varphi(2^{-j}\rho)\rho^{\frac{n-1}2}\|^2_{L^2},~R\gg1.
\end{cases}
\end{equation}
\end{lemma}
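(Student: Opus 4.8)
The plan is to reduce both inequalities in \eqref{est:G} to the elementary bounds on $J_\nu$ collected in Lemma \ref{lem: J}. Since $\varphi$ is supported in $[1/2,2]$, on the region of integration in \eqref{def:G} one has $\rho\sim 1$, so that $r\rho\sim r\sim R$ for $r\in[R,2R]$. I would first use Fubini in \eqref{def:G} and perform the change of variables $\rho\mapsto 2^j\rho$ in the $\rho$-integral; because $\varphi(2^{-j}\cdot)$ lives where $\sigma\sim 2^j$, this substitution converts $\int_0^\infty|b_{\nu,\ell}(2^j\rho)|^2\varphi^2(\rho)\,d\rho$ into $2^{-nj}\|b_{\nu,\ell}(\rho)\varphi(2^{-j}\rho)\rho^{\frac{n-1}2}\|_{L^2}^2$ up to a multiplicative constant depending only on $n$. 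Hence the factor $2^{-nj}$ in \eqref{est:G} is automatic, and everything comes down to bounding $\int_R^{2R}(r\rho)^{-(n-2)}|J_\nu(r\rho)|^2\,dr$ uniformly for $\rho\in[1/2,2]$.

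When $R\gg 1$ I would bound $(r\rho)^{-(n-2)}\le C_n R^{-(n-2)}$ (legitimate since $r\rho\ge R/2$) and then substitute $s=r\rho$, turning the $r$-integral into $\rho^{-1}\int_{R\rho}^{2R\rho}|J_\nu(s)|^2\,ds$. The interval $[R\rho,2R\rho]$ has the form $[\tilde R,2\tilde R]$ with $\tilde R=R\rho\gg 1$, so \eqref{est:b} gives $\int_{R\rho}^{2R\rho}|J_\nu(s)|^2\,ds\le C$ uniformly in $\nu$ and $R$; combined with $\rho\ge 1/2$ and the $\rho$-scaling of the previous paragraph this yields the second line of \eqref{est:G}.

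When $R\lesssim 1$ the argument $r\rho$ stays bounded, and I would instead invoke \eqref{bessel-r}, i.e.\ $|J_\nu(r\rho)|\le C_\nu (r\rho)^\nu$ with $C_\nu=\frac{C}{2^\nu\Gamma(\nu+\frac12)\Gamma(\frac12)}\big(1+\frac1{\nu+1/2}\big)$. Then $(r\rho)^{-(n-2)}|J_\nu(r\rho)|^2\le C_\nu^2(r\rho)^{2\nu-n+2}\lesssim R^{2\nu-n+2}$, the last step using $r\rho\sim R$ and $\nu\ge\nu_0>0$; integrating over $r\in[R,2R]$ produces one further power of $R$, so after the $\rho$-scaling we obtain $G_{\nu,\ell}(R,2^j)\lesssim R^{2\nu-n+3}2^{-nj}\|b_{\nu,\ell}(\rho)\varphi(2^{-j}\rho)\rho^{\frac{n-1}2}\|_{L^2}^2$, which is the first line of \eqref{est:G}.

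The one place that deserves attention is the behaviour of the constants in $\nu$: in the small-$R$ estimate the implicit constant is essentially $C_\nu^2\,2^{2\nu}$, which is $\lesssim \Gamma(\nu+\tfrac12)^{-2}$ and hence uniformly bounded (indeed decaying) for $\nu\ge\nu_0$, so all the constants in \eqref{est:G} are genuinely independent of $\nu$ — this uniformity is what will make the summation over $\nu\in\chi_\infty$ in \eqref{scal-reduce} admissible. I do not expect a serious obstacle here; the substantive feature is rather that the exponent $2\nu-n+3$ is the sharp one coming from the vanishing order of $J_\nu$ at the origin, and it is exactly this sharp power, together with the weight $R^{n-1-2\beta}$ in \eqref{scal-reduce}, that both forces and exploits the hypothesis $\beta<1+\nu_0$.
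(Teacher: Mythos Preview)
Your proposal is correct and follows essentially the same argument as the paper: both split into the cases $R\lesssim 1$ and $R\gg 1$, use \eqref{bessel-r} and \eqref{est:b} respectively from Lemma~\ref{lem: J}, and obtain the factor $2^{-nj}$ by the change of variables $\rho\mapsto 2^j\rho$ together with $\rho\sim 1$ on $\mathrm{supp}\,\varphi$. Your additional remark on the uniformity in $\nu$ of the implicit constant is a useful clarification that the paper leaves implicit.
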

\

\begin{proof}
This lemma is proved in \cite[Proposition 4.2]{ZZ1}. For convenience, we provide the sketch of the proof again. 
To prove \eqref{est:G}, we break it into two
cases. \vspace{0.2cm}

$\bullet$ Case 1: $R\lesssim1$. Since $\rho\sim1$, thus
$r\rho\lesssim1$. By 
\eqref{bessel-r}, we obtain
\begin{equation*}
\begin{split}
G_{\nu,\ell}(R,2^j)&\lesssim\int_{R}^{2R}\int_{0}^\infty\Big| \frac{
(r\rho)^{\nu}(r\rho)^{-\frac{n-2}2}}{2^{\nu}\Gamma(\nu+\frac12)\Gamma(\frac12)}b_{\nu,\ell}(2^j\rho)\varphi(\rho) \Big|^2 d\rho dr\\& \lesssim
R^{2\nu-n+3}2^{-nj}\|b_{\nu,\ell}(\rho)\varphi(2^{-j}\rho)\rho^{\frac{n-1}2}\|^2_{L^2}.
\end{split}
\end{equation*}

$\bullet$ Case 2: $R\gg1$. Since $\rho\sim1$, thus $r\rho\gg 1$. We
estimate by \eqref{est:b} in Lemma \ref{lem: J} 
\begin{equation*}
\begin{split}
G_{\nu,\ell}(R,2^j)&\lesssim
R^{-(n-2)}\int_{0}^\infty\big|b_{\nu,\ell}(2^j\rho)\varphi(\rho)\big|^2\int_{R}^{2R}\big|J_{\nu}(
r\rho)\big|^2  dr d\rho\\& \lesssim R^{-(n-2)}\int_{0}^\infty\big|b_{\nu,\ell}(2^j\rho)\varphi(\rho)\big|^2 d\rho\lesssim
R^{-(n-2)}2^{-nj}\|b_{\nu,\ell}(\rho)\varphi(2^{-j}\rho)\rho^{\frac{n-1}2}\|^2_{L^2}.
\end{split}
\end{equation*}
Thus we obtain \eqref{est:G}. 

\end{proof}
Therefore we apply \eqref{est:G} to \eqref{scal-reduce} to obtain
\begin{equation*}
\begin{split}
&\|r^{-\beta}e^{it\sqrt{1+\LL_V}}u^l_0\|^2_{L^2_t(\R;L^2(X))}\\&\lesssim
\sum_{\nu\in\chi_\infty}\sum_{\ell=1}^{d(\nu)}\sum_{j\leq0}\Big(\sum_{R\in2^{\Z}, R\lesssim 1}2^{j(n-2+2\beta)}R^{n-1-2\beta}R^{2\nu-n+3}2^{-nj}
\\&\quad\qquad+\sum_{R\in2^{\Z}, R\gg 1}2^{j(n-2+2\beta)}R^{n-1-2\beta}R^{-(n-2)}2^{-nj}\Big)\|b_{\nu,\ell}(\rho)\varphi(2^{-j}\rho)\rho^{\frac{n-1}2}\|^2_{L^2}
\\&\lesssim
\sum_{\nu\in\chi_\infty}\sum_{\ell=1}^{d(\nu)}\sum_{j\leq0}2^{2j(\beta-1)}\Big(\sum_{R\in2^{\Z}, R\lesssim 1}R^{2(1+\nu-\beta)}
+\sum_{R\in2^{\Z}, R\gg 1}R^{1-2\beta}\Big)\|b_{\nu,\ell}(\rho)\varphi(2^{-j}\rho)\rho^{\frac{n-1}2}\|^2_{L^2}.
\end{split}
\end{equation*}
Note that if $\frac12<\beta<1+\nu_0 $ the summation in $R$ converges; this yields
\begin{equation*}
\begin{split}
&\|r^{-\beta}e^{it\sqrt{1+\LL_V}}u^l_0\|^2_{L^2_t(\R;L^2(X))}\lesssim
\sum_{\nu\in\chi_\infty}\sum_{\ell=1}^{d(\nu)}\sum_{j\leq0}2^{2j(\beta-1)}\|b_{\nu,\ell}(\rho)\varphi(2^{-j}\rho)\rho^{\frac{n-1}2}\|^2_{L^2}.
\end{split}
\end{equation*} 
Note that we benefit nothing from the factor $(1+\LL_V)^{-1/2}$ when we are restricted to low frequencies. Following the same argument, we also show
\begin{equation*}
\begin{split}
&\Big\|r^{-\beta}\frac{e^{it\sqrt{1+\LL_V}}}{\sqrt{1+\LL_V}}u_1^l\Big\|^2_{L^2_t(\R;L^2(X))}\lesssim
\sum_{\nu\in\chi_\infty}\sum_{\ell=1}^{d(\nu)}\sum_{j\leq0}2^{2j(\beta-1)}\|b_{\nu,\ell}(\rho)\varphi(2^{-j}\rho)\rho^{\frac{n-1}2}\|^2_{L^2}.
\end{split}
\end{equation*} 
Hence we obtain \eqref{l-local-s}. 

Next we prove \eqref{h-local-s}. Following the same argument, we are reduced to estimate
\begin{equation}\label{scal-reduce'}
\begin{split}
&\|r^{-\beta}e^{it\sqrt{1+\LL_V}}u^h_0\|^2_{L^2_t(\R;L^2(X))}\\
&\lesssim \sum_{\nu\in\chi_\infty}\sum_{\ell=1}^{d(\nu)}\sum_{j\geq 0}\int_0^\infty\int_0^\infty\big|(r\rho)^{-\frac{n-2}2}J_{\nu}(r\rho)b_{\nu,\ell}(\rho)
\rho^{n-1}\big|^2 \varphi^2(2^{-j}\rho) d\rho r^{n-1-2\beta}dr\\&\lesssim
\sum_{\nu\in\chi_\infty}\sum_{\ell=1}^{d(\nu)}\sum_{j\geq0}\sum_{R\in2^{\Z}}2^{j(n-1+2\beta)}R^{n-1-2\beta}G_{\nu,\ell}(R,2^j).
\end{split}
\end{equation}
Therefore we apply \eqref{est:G} again to \eqref{scal-reduce'} to obtain
\begin{equation*}
\begin{split}
&\|r^{-\beta}e^{it\sqrt{1+\LL_V}}u^h_0\|^2_{L^2_t(\R;L^2(X))}\\&\lesssim
\sum_{\nu\in\chi_\infty}\sum_{\ell=1}^{d(\nu)}\sum_{j\geq0}2^{2j(\beta-\frac12)}\Big(\sum_{R\in2^{\Z}, R\lesssim 1}R^{2(1+\nu-\beta)}
+\sum_{R\in2^{\Z}, R\gg 1}R^{1-2\beta}\Big)\|b_{\nu,\ell}(\rho)\varphi(2^{-j}\rho)\rho^{\frac{n-1}2}\|^2_{L^2}.
\end{split}
\end{equation*}
Note that if $\frac12<\beta<1+\nu_0 $ the summation in $R$ converges, thus we obtain
\begin{equation*}
\begin{split}
&\|r^{-\beta}e^{it\sqrt{1+\LL_V}}u^h_0\|^2_{L^2_t(\R;L^2(X))}\lesssim
\sum_{\nu\in\chi_\infty}\sum_{\ell=1}^{d(\nu)}\sum_{j\leq0}2^{2j(\beta-\frac12)}\|b_{\nu,\ell}(\rho)\varphi(2^{-j}\rho)\rho^{\frac{n-1}2}\|^2_{L^2}.
\end{split}
\end{equation*} 
Note that we gain a $2^{-j}$ from the factor $(1+\LL_V)^{-1/2}$ when we restrict to high frequencies. Following the same argument, we also have
\begin{equation*}
\begin{split}
&\Big\|r^{-\beta}\frac{e^{it\sqrt{1+\LL_V}}}{\sqrt{1+\LL_V}}u_1^h\Big\|^2_{L^2_t(\R;L^2(X))}\lesssim
\sum_{\nu\in\chi_\infty}\sum_{\ell=1}^{d(\nu)}\sum_{j\leq0}2^{2j(\beta-\frac32)}\|b_{\nu,\ell}(\rho)\varphi(2^{-j}\rho)\rho^{\frac{n-1}2}\|^2_{L^2}.
\end{split}
\end{equation*} 
Hence \eqref{h-local-s} follows.

\end{proof}

\begin{corollary}\label{cor:loc} Let $u$ be the solution of \eqref{equ:KG} with $F=0$, then there exists a constant $C$
independent of $(u_0,u_1)$ such that
\begin{equation}\label{cor:local-s}
\begin{split}
\|r^{-\beta}u(t,z)\|_{L^2_t(\R;L^2(X))}\leq
C\left(\|u_0\|_{H^{\beta-\frac12}(X)}+\|u_1\|_{H^{\beta-\frac32}(X)}\right),
\end{split}
\end{equation}
where $z=(r,y)\in X$, $1\leq \beta<1+\nu_0$ with $\nu_0>0$ such that $\nu_0^2$ is the smallest eigenvalue of $\Delta_h+V_0(y)+(n-2)^2/4$.
\end{corollary}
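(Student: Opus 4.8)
The plan is to derive \eqref{cor:local-s} from Proposition~\ref{prop:loc} by splitting $u$ into its $\LL_V$‑low‑frequency and $\LL_V$‑high‑frequency components. Concretely, I would write
\begin{equation*}
u=\varphi_0(\sqrt{\LL_V})u+\big(1-\varphi_0\big)(\sqrt{\LL_V})u=:u^l+u^h .
\end{equation*}
Since $\varphi_0(\sqrt{\LL_V})$ and $(1-\varphi_0)(\sqrt{\LL_V})$ are functions of $\LL_V$ they commute with the half‑wave propagators $\cos(t\sqrt{1+\LL_V})$ and $\sin(t\sqrt{1+\LL_V})/\sqrt{1+\LL_V}$, so $u^l$ and $u^h$ are themselves solutions of \eqref{equ:KG} with $F=0$ and initial data $\varphi_0(\sqrt{\LL_V})(u_0,u_1)$ and $(1-\varphi_0)(\sqrt{\LL_V})(u_0,u_1)$, respectively. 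As $r^{-\beta}$ acts by multiplication, the triangle inequality gives
\begin{equation*}
\|r^{-\beta}u\|_{L^2_t(\R;L^2(X))}\le\|r^{-\beta}u^l\|_{L^2_t(\R;L^2(X))}+\|r^{-\beta}u^h\|_{L^2_t(\R;L^2(X))},
\end{equation*}
and it remains to bound the two terms by $\|u_0\|_{H^{\beta-\frac12}(X)}+\|u_1\|_{H^{\beta-\frac32}(X)}$.

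The high‑frequency term is handled at once: since $1\le\beta<1+\nu_0$ forces $\beta\in(1/2,1+\nu_0)$, estimate \eqref{h-local-s} gives $\|r^{-\beta}u^h\|_{L^2_t(\R;L^2(X))}\lesssim\|u_0\|_{H^{\beta-\frac12}(X)}+\|u_1\|_{H^{\beta-\frac32}(X)}$ directly. For the low‑frequency term I would invoke \eqref{l-local-s}, but applied to the data already truncated to low frequencies. Pick $\widetilde\varphi_0\in C_c^\infty([0,\infty))$ with $\widetilde\varphi_0\equiv1$ on $\mathrm{supp}\,\varphi_0$, so that $\varphi_0(\sqrt{\LL_V})\widetilde\varphi_0(\sqrt{\LL_V})=\varphi_0(\sqrt{\LL_V})$; then $u^l=\varphi_0(\sqrt{\LL_V})\widetilde u$, where $\widetilde u$ solves \eqref{equ:KG} with $F=0$ and data $\big(\widetilde\varphi_0(\sqrt{\LL_V})u_0,\widetilde\varphi_0(\sqrt{\LL_V})u_1\big)$, and \eqref{l-local-s} yields
\begin{equation*}
\|r^{-\beta}u^l\|_{L^2_t(\R;L^2(X))}\lesssim\|\widetilde\varphi_0(\sqrt{\LL_V})u_0\|_{L^2(X)}+\|\widetilde\varphi_0(\sqrt{\LL_V})u_1\|_{L^2(X)} .
\end{equation*}
Finally, for every $s\in\R$ the spectral multiplier $\lambda\mapsto\widetilde\varphi_0(\sqrt\lambda)(1+\lambda)^{-s/2}$ is bounded (a compactly supported continuous function), whence $\|\widetilde\varphi_0(\sqrt{\LL_V})g\|_{L^2(X)}=\big\|\big[\widetilde\varphi_0(\sqrt{\LL_V})(1+\LL_V)^{-s/2}\big](1+\LL_V)^{s/2}g\big\|_{L^2(X)}\lesssim\|g\|_{H^s(X)}$; taking $s=\beta-\tfrac12$ with $g=u_0$ and $s=\beta-\tfrac32$ with $g=u_1$ turns the last display into $\|r^{-\beta}u^l\|_{L^2_t(\R;L^2(X))}\lesssim\|u_0\|_{H^{\beta-\frac12}(X)}+\|u_1\|_{H^{\beta-\frac32}(X)}$. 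Adding the two bounds gives \eqref{cor:local-s}.

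Beyond this bookkeeping there is no genuine obstacle; the only delicate point is the $u_1$‑contribution to the low‑frequency estimate in the range $1\le\beta<\tfrac32$, where $\beta-\tfrac32<0$ and a direct appeal to the \emph{statement} of \eqref{l-local-s} would leave $\|u_1\|_{L^2(X)}$ on the right, which is not dominated by $\|u_1\|_{H^{\beta-\frac32}(X)}$ in general. Pre‑truncating the data to a fixed bounded spectral window — on which all the inhomogeneous norms $H^s(X)$ are mutually comparable — removes this, as done above. Equivalently, one may reread the proof of \eqref{l-local-s}: the dyadic sum over $j\le0$ occurring there carries a weight $2^{2j(\beta-1)}\le1$ which, on its support, is comparable to $(1+\rho^2)^{\beta-\frac32}$, so the argument in fact produces $\|u_1\|_{H^{\beta-\frac32}(X)}$ in place of $\|u_1\|_{L^2(X)}$ as soon as $\beta\ge1$.
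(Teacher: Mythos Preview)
Your proof is correct and follows exactly the route the paper takes: the paper's proof is the single sentence ``This is a consequence of \eqref{l-local-s} and \eqref{h-local-s},'' i.e.\ the same low/high $\LL_V$-frequency splitting you write out. You have in fact been more careful than the paper, correctly flagging and resolving the one genuine subtlety---that for $1\le\beta<\tfrac32$ the $L^2$-norm of $u_1$ on the right of \eqref{l-local-s} is not a priori controlled by $\|u_1\|_{H^{\beta-3/2}}$---by pre-truncating the data to a bounded spectral window; the paper's one-line proof leaves this implicit.
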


\begin{proof}
This is a consequence of \eqref{l-local-s} and \eqref{h-local-s}.

\end{proof}

\section{The proof of Theorem \ref{thm:Strichartz} }\label{sec:7}

In this section, taking the influence of the potential $V$ into account, by making use of  Propositions \ref{Str-L0}, \ref{prop:inh} and \ref{prop:loc} we prove the Strichartz estimates in Theorem \ref{thm:Strichartz}.

\subsection {The Strichartz estimates \eqref{V-stri} } In this subsection, we mainly prove the homogeneous Strichartz estimates \eqref{V-stri} for all $q>2$ and $(q,r)\in \Lambda_{s,\theta,\nu_0}$. 
Due to the dependence on $\nu_0$ of the set $\Lambda_{s,\theta,\nu_0}$, we consider two cases.\vspace{0.2cm}

{\bf Case 1:} $0<\nu_0\leq \frac{1-\theta}{n-1+\theta}$ with $0\leq\theta\leq 1$.
 In this case, 
the set $\Lambda_{s,\theta,\nu_0}$ is the region $ACE$ which is contained in $ABF$ ($n\geq4;\; {\rm or}\, n=3, 0<\theta\leq 1$) or $ABO$ ($n=3, \theta=0$) respectively. 
\begin{center}
 \begin{tikzpicture}[scale=1]
\draw[->] (0,0) -- (4,0) node[anchor=north] {$\frac{1}{q}$};
\draw[->] (0,0) -- (0,4)  node[anchor=east] {$\frac{1}{r}$};
\draw (0,0) node[anchor=north] {$O$}
(3,0) node[anchor=north] {$\frac12$};
\draw  (0, 3) node[anchor=east] {$\frac12$}
       (0, 1.8) node[anchor=east] {$\frac12-\frac{1+\nu_0}{n}$}
       (0, 1.2) node[anchor=east] {$\frac12-\frac{1}{n-1+\theta}$};

\draw[thick] (3,0) -- (3,1.2)  
              (3,1.2) -- (0,3);

\draw[dashed,thick] (0,1.2) -- (3,1.2);
 \draw[red, dashed,thick]         (2.0,1.8) -- (0,1.8); 
\filldraw[fill=gray!50](0,1.8)--(2,1.8)--(0,3); 
\draw (-0.1,3.2) node[anchor=west] {$A$};
\draw (2.9,1.2) node[anchor=west] {$B$};
\draw (1.8,1.6) node[anchor=west] {$C$};
\draw (1.7,2.0) node[anchor=west] {$C'$};
\draw (2.9,0.15) node[anchor=west] {$D$};
\draw (-0.1,1.6) node[anchor=west] {$E$};
\draw (-0.1,1.0) node[anchor=west] {$F$};
\draw (1.65,2.88) node[anchor=west] {$\frac2q+\frac{n-1+\theta}{r}=\frac{n-1+\theta}{2}$};

\draw[<-] (1.6,2.1) -- (2,2.6) node[anchor=south]{$~$};

\path (2,-1) node(caption){Fig 1. $n\geq4, 0\leq\theta\leq 1; \;{\rm or}\,  n=3, 0<\theta\leq 1$};  

\draw (1.75,1.95) circle (0.06);

\draw[->] (8,0) -- (12,0) node[anchor=north] {$\frac{1}{q}$};
\draw[->] (8,0) -- (8,4)  node[anchor=east] {$\frac{1}{r}$};
\path (9.6,-1) node(caption){Fig 2. $n=3, \theta=0$};  

\draw  (8.1, -0.1) node[anchor=east] {$O$};
\draw  (11, 0) node[anchor=north] {$\frac12$};
\draw  (8, 3) node[anchor=east] {$\frac12$};

\draw[thick] (8,3) -- (11,0);  
\draw (7.9,3.15) node[anchor=west] {$A$};
\draw (10.9,0.2) node[anchor=west] {$B$};
\draw (10,2.6) node[anchor=west] {$\frac{2}{q}+\frac{2}{r}=1$};

\draw (10.4,0.6) circle (0.06);

\filldraw[fill=gray!50](8,3)--(8,0.6)--(10.4,0.6); 

\draw (7.9,0.45) node[anchor=west] {E};
\draw (9.9,0.95) node[anchor=west] {$C'$};
\draw (10,0.45) node[anchor=west] {C};
\draw   (8, 0.6) node[anchor=east] {$\frac12-\frac{1+\nu_0}{n}$};
 \draw[red, dashed,thick]         (8.0,0.6) -- (10.5,0.6); 

\draw[<-] (9,2.1) -- (10,2.6) node[anchor=south]{$~$};

\path (6,-1.5) node(caption){Diagrammatic picture of the range of $(q,r)$, when $0<\nu_0\leq (1-\theta)/(n-1+\theta)$.};  

\end{tikzpicture}

\end{center}

If $(q,r)\in\Lambda_{s,\theta,\nu_0}$, on the one hand, by $(q,r)\in \Lambda_{s,\theta}$, we have
\begin{equation}\label{s:index}
s=(n+\theta)\Big(\frac12-\frac1r\Big)-\frac1q\geq\frac12(n+1+\theta)\Big(\frac12-\frac1r\Big)\geq0
\end{equation}
and on the other hand, by $0<\nu_0\leq \frac{1-\theta}{n-1+\theta}$, we have 
\begin{equation}\label{s:index'}
s=(n+\theta)\Big(\frac12-\frac1r\Big)-\frac1q\leq(n+\theta)\frac{1+\nu_0}n-\frac1q\leq 1.
\end{equation}

Therefore, without loss of generality,  we may assume
$0\leq s\leq 1$. 

Our strategy is then the following:

$\bullet$ prove \eqref{V-stri} at the point $A$ and $C'$, where $C'$ is on the line $AC$ and is close to $C$,

$\bullet$ obtain the Strichartz estimates  on the line $AC$ (except point $C$) by interpolation,

$\bullet$ show the inequalities in the  region $ACE$ (except $CE$) by Sobolev inequalities.\vspace{0.2cm}

It is easy to prove the Strichartz estimate when $(q,r)=(\infty, 2)\in\Lambda_{0,\theta}$ (i.e. $A$ in the
above figures) from the spectral theory on $L^2$.\vspace{0.2cm}

Consider now any fixed point $C'=(\frac1q,\frac1r)$ such that $\frac2q=(n-1+\theta)(\frac12-\frac1r)$ and $\frac1r=\frac12-\frac{1+\nu_0}n+\epsilon$ with $0<\epsilon\ll1$. In other words, the point $C'$
is on the line $AC$ and is close to $C$, and let us prove \eqref{V-stri} at the point $C'$. Recall that as $s=(n+\theta)\Big(\frac12-\frac1r\Big)-\frac1q$, then
\begin{equation*}
\frac12\leq s=\big(\frac{1+\nu_0}n-\epsilon\big)\frac{n+1+\theta}2\leq \frac{n+1+\theta}{2(n-1+\theta)}-\frac{\epsilon}2(n+1+\theta).
\end{equation*}
We thus consider the free Klein-Gordon equation
\begin{equation*}
\partial_{t}^2u+\LL_V u+u=0, \quad u(0)=u_0,
~\partial_tu(0)=u_1,
\end{equation*}
and without losing in generality, we can assume $u_1=0$.
In order to prove  \eqref{V-stri} at the point $C'$, we split the initial data into two parts,  $u_0=u_{0,l}+u_{0,h}$ where $u_{0,h}=u_0-u_{0,l}$ and
 \begin{equation*}
 u_{0,l}=\sum_{\nu\in A}\sum_{\ell=1}^{d(\nu)}a_{\nu,\ell}(r)\varphi_{\nu,\ell}(y), \quad A=\{\nu\in\chi_\infty: \nu\leq 1+\nu_0\}.
 \end{equation*}
 We remark here that the notation $u_{0,l}$ has nothing to do with $u_0^l=\varphi_0(\sqrt{\LL_V})u_0$ defined before. Correspondingly, we split the solution into two parts, $u=u_l+u_h$, where $u_l$ and $u_h$ satisfy 
\begin{equation}\label{KG-l}
\partial_{t}^2u_l+\LL_V u_l+u_l=0, \quad u(0)=u_{0,l},
~\partial_tu(0)=0,
\end{equation}
and
\begin{equation}\label{KG-h}
\partial_{t}^2u_h+\LL_V u_h+u_h=0, \quad u(0)=u_{0,h},
~\partial_tu(0)=0.
\end{equation}
Note that $\pi_{\nu}u_{0,h}=0$ when $\nu\leq k:=1+\nu_0$, hence one also has $\pi_{\nu}u_{h}=0$ when $\nu\leq k:=1+\nu_0$. 

We first consider $u_h(t,z)$. By the Duhamel formula,
we have 
\begin{equation}\label{h-duhamel'}
\begin{split}
u_h(t,z)&=\frac{e^{it\sqrt{1+\LL_V}}+e^{-it\sqrt{1+\LL_V}}}2 u_{0,h}\\&=\frac{e^{it\sqrt{1+\Delta_g}}+e^{-it\sqrt{1+\Delta_g}}}2 u_{0,h}+\int_0^t\frac{\sin{(t-\tau)\sqrt{1+\Delta_g}}}
{\sqrt{1+\Delta_g}}(V(z)u_h(\tau,z))d\tau.
\end{split}
\end{equation}
By using \eqref{h-duhamel'} and Proposition \ref{Str-L0}, we have
\begin{equation*}
\begin{split}
&\|u_h(t,z)\|_{L^q(\R;L^{r}(X))}\\&\lesssim \|u_{0,h}\|_{ H^{s}(X)}+\Big\|\int_0^t\frac{\sin{(t-\tau)\sqrt{1+\Delta_g}}}
{\sqrt{1+\Delta_g}}(V(z)u_h(\tau,z))d\tau\Big\|_{_{L^q(\R;L^{r}(X))}}.
\end{split}
\end{equation*}
Notice that since $0\leq s\leq 1$, as mentioned in Remark \ref{rem:equiv} the two Sobolev norms are equivalent (independently from the fact that $V=0$); therefore we can safely use the notation $H^s$ without any confusion.
Now our main task is to prove
\begin{equation}\label{est:h-inh}
\begin{split}
\Big\|\int_0^t\frac{\sin{(t-\tau)\sqrt{1+\Delta_g}}}
{\sqrt{1+\Delta_g}}(V(z)u_h(\tau,z))d\tau\Big\|_{_{L^q(\R;L^{r}(X))}}\lesssim \|u_0\|_{ H^{s}(X)}.
\end{split}
\end{equation}
To this end, we estimate
\begin{equation}\label{est:h-inh'}
\begin{split}
&\Big\|\int_0^t\frac{\sin{(t-\tau)\sqrt{1+\Delta_g}}}
{\sqrt{1+\Delta_g}}(V(z)u_h(\tau,z))d\tau\Big\|_{_{L^q(\R;L^{r}(X))}}
\\&\lesssim \Big\|\int_0^t\frac{\sin{(t-\tau)\sqrt{1+\Delta_g}}}
{\sqrt{1+\Delta_g}}(1-\varphi_0)(\sqrt{\Delta_g})(V(z)u_h(\tau,z))d\tau\Big\|_{_{L^q(\R;L^{r}(X))}}\\&+\Big\|\int_0^t\frac{\sin{(t-\tau)\sqrt{1+\Delta_g}}}
{\sqrt{1+\Delta_g}}\varphi_0(\sqrt{\Delta_g})(V(z)u_h(\tau,z))d\tau\Big\|_{_{L^q(\R;L^{r}(X))}}.
\end{split}
\end{equation}
Hence \eqref{est:h-inh} is the consequence of the following lemma.
\begin{lemma}
We have
\begin{equation}\label{est:h-h-inh}
\begin{split}
 \Big\|\int_0^t\frac{\sin{(t-\tau)\sqrt{1+\Delta_g}}}
{\sqrt{1+\Delta_g}}(1-\varphi_0)(\sqrt{\Delta_g})(V(z)u_h(\tau,z))d\tau\Big\|_{_{L^q(\R;L^{r}(X))}}\lesssim \|u_0\|_{ H^{s}(X)},
\end{split}
\end{equation}
and
\begin{equation}\label{est:h-l-inh}
\begin{split}
\Big\|\int_0^t\frac{\sin{(t-\tau)\sqrt{1+\Delta_g}}}
{\sqrt{1+\Delta_g}}\varphi_0(\sqrt{\Delta_g})(V(z)u_h(\tau,z))d\tau\Big\|_{_{L^q(\R;L^{r}(X))}}\lesssim \|u_0\|_{ H^{s}(X)}.
\end{split}
\end{equation}

\end{lemma}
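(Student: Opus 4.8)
The plan is to prove both \eqref{est:h-h-inh} and \eqref{est:h-l-inh} by the same scheme, handling the low- and high-frequency cut-offs through the corresponding local smoothing estimates of Proposition~\ref{prop:loc}. First, using \eqref{sin-UU} one replaces $\tfrac{\sin((t-\tau)\sqrt{1+\Delta_g})}{\sqrt{1+\Delta_g}}$ by $\langle\sqrt{\Delta_g}\rangle^{-1}\big(U(t)U(\tau)^*-U(-t)U(-\tau)^*\big)/2i$ with $U(t)=e^{it\sqrt{1+\Delta_g}}$, and treats the two summands identically. Since $q>2$, the Christ--Kiselev lemma (\cite{CK}) allows us to pass from the retarded integral $\int_{\tau<t}$ to the full integral over $\R$, at the price of placing the source $Vu_h$ in a space with time-integrability exponent $2$. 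It thus suffices to bound $\big\|\int_\R U(t)U(\tau)^*\langle\sqrt{\Delta_g}\rangle^{-1}\chi(\sqrt{\Delta_g})\,(Vu_h)(\tau)\,d\tau\big\|_{L^q_tL^r_z}$, where $\chi=1-\varphi_0$ in \eqref{est:h-h-inh} and $\chi=\varphi_0$ in \eqref{est:h-l-inh}. This operator factors as $U(t)(1+\Delta_g)^{-s/2}$ composed with $R\colon G\mapsto(1+\Delta_g)^{s/2}\langle\sqrt{\Delta_g}\rangle^{-1}\chi(\sqrt{\Delta_g})\int_\R U(\tau)^*G(\tau)\,d\tau$; the first factor is bounded $L^2\to L^q_tL^r_z$ because $(q,r)\in\Lambda_{s,\theta}$ (homogeneous part of Proposition~\ref{Str-L0}), so the problem is reduced to $\|R(Vu_h)\|_{L^2(X)}\lesssim\|u_0\|_{H^s(X)}$.

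To estimate $R(Vu_h)$ I would split the inverse-square weight as $r^{-2}=r^{-\beta_1}r^{-\beta_2}$ with $\beta_1+\beta_2=2$, putting $r^{-\beta_1}$ onto $u_h$ and $r^{-\beta_2}$ onto the dual factor. Taking the adjoint of $R$ relative to the pairing of $L^2_\tau(r^{-\beta_2}L^2_z)$ with $L^2_\tau(r^{\beta_2}L^2_z)$, one obtains $\|RG\|_{L^2_z}\lesssim\|r^{\beta_2}G\|_{L^2_\tau L^2_z}$ as soon as $\big\|r^{-\beta_2}U(\tau)\chi(\sqrt{\Delta_g})\langle\sqrt{\Delta_g}\rangle^{s-1}h\big\|_{L^2_\tau L^2_z}\lesssim\|h\|_{L^2_z}$, which is precisely a local smoothing estimate for the free Klein--Gordon flow applied to $\langle\sqrt{\Delta_g}\rangle^{s-1}\chi(\sqrt{\Delta_g})h$. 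For \eqref{est:h-h-inh} the high-frequency bound \eqref{h-local-s} with $V=0$ (so $\nu_0>(n-2)/2$, cf. Remark~\ref{rem:loc-s}(i)) delivers it provided $\beta_2-\tfrac12\le 1-s$ and $\tfrac12<\beta_2<\tfrac n2$; for \eqref{est:h-l-inh} the low-frequency bound \eqref{l-local-s} with $V=0$ delivers it for $1\le\beta_2<\tfrac n2$, since $\varphi_0(\sqrt{\Delta_g})\langle\sqrt{\Delta_g}\rangle^{s-1}$ is a bounded operator (compact spectral support). On the other side $\|r^{\beta_2}Vu_h\|_{L^2_\tau L^2_z}\lesssim\|r^{-\beta_1}u_h\|_{L^2_\tau L^2_z}$ (using $\beta_2-2=-\beta_1$ and $V_0\in\CC^\infty(Y)$ bounded), and here one crucially uses that $\pi_\nu u_{0,h}=0$ for $\nu\le 1+\nu_0$, hence $\pi_\nu u_h=0$ for such $\nu$: by Remark~\ref{rem:loc-s}(ii) the local smoothing estimate of Corollary~\ref{cor:loc} for the solution $u_h$ of \eqref{KG-h} holds in the \emph{enlarged} range $1\le\beta_1<2+\nu_0$, giving $\|r^{-\beta_1}u_h\|_{L^2_\tau L^2_z}\lesssim\|u_{0,h}\|_{H^{\beta_1-1/2}(X)}\lesssim\|u_0\|_{H^s(X)}$ whenever $\beta_1-\tfrac12\le s$ (the spectral projection $u_0\mapsto u_{0,h}$ is bounded on $H^{\beta_1-1/2}(X)$ because it commutes with $\LL_V$).

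Finally one must choose $\beta_1,\beta_2$ so that all constraints hold simultaneously. For \eqref{est:h-h-inh} take $\beta_1=\tfrac12+s$ and $\beta_2=\tfrac32-s$; then $\beta_1-\tfrac12\le s$ and $\beta_2-\tfrac12\le 1-s$ hold with equality, and since at the point $C'$ one has $\tfrac12\le s<1$ (as recorded just before \eqref{KG-l}), the range conditions $1\le\beta_1<2+\nu_0$ and $\tfrac12<\beta_2<\tfrac n2$ are met for all $n\ge3$. For \eqref{est:h-l-inh} take $\beta_1=\beta_2=1$: the requirement $\beta_1=1\le s+\tfrac12$ is exactly $s\ge\tfrac12$, again true at $C'$, while $\beta_2=1\in[1,\tfrac n2)$ for $n\ge3$. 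This proves both inequalities and hence the lemma. I expect the main obstacle to be exactly this bookkeeping: the weight budget $\beta_1+\beta_2=2$, the Sobolev budget dictated by $s$, and the two local-smoothing ranges all have to close at once, which happens only because the argument is run at a point where $s\ge\tfrac12$ (and $<1$) and because the spectral gap below $1+\nu_0$ enlarges the admissible weight exponent for $u_h$; a secondary technical point is justifying the Christ--Kiselev reduction in the operator-valued, weighted-$L^2$ setting.
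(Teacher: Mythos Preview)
Your proof is correct and follows essentially the same route as the paper. Both arguments factor the retarded integral as a homogeneous Strichartz estimate for $e^{it\sqrt{1+\Delta_g}}$ composed with the adjoint of a free local-smoothing operator, split the weight $r^{-2}=r^{-\beta_1}r^{-\beta_2}$, and make the identical parameter choices: $\beta_1=\tfrac12+s$, $\beta_2=\tfrac32-s$ for \eqref{est:h-h-inh} and $\beta_1=\beta_2=1$ for \eqref{est:h-l-inh}. The paper packages the dual local-smoothing step as an explicit operator $T^*$ and the composite as $B$, while you describe the same factorization via $R$ and a duality pairing; the use of Remark~\ref{rem:loc-s}(ii) to enlarge the admissible $\beta_1$-range for $u_h$ and the final Christ--Kiselev reduction (valid since $q>2$) are also identical.
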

  
\begin{proof}
We first prove \eqref{est:h-h-inh}. Let $\beta=\frac32-s$, then $\beta$ is close to $1-\frac1{n-1+\theta}$ from above, hence $\frac12<\beta<1+\nu_0$. We  define the operator 
$$T: L^2(X)\to L^2(\R;L^2(X)), \quad Tf= r^{-\beta}e^{it\sqrt{1+\Delta_g}}(1-\varphi_0)(\sqrt{\Delta_g})(1+ \Delta_g)^{\frac12(\frac12-\beta)} f.$$ 
Thus from the proof of Proposition \ref{prop:loc} when $V=0$,  it follows that $T$ is a bounded operator.
By duality, its adjoint $T^*$  
$$T^*: L^2(\R;L^2(X))\to L^2, \quad T^* F=\int_{\tau\in\R}(1+\Delta_g)^{\frac12(\frac12-\beta)} (1-\varphi_0)(\sqrt{\Delta_g}) e^{-i\tau\sqrt{1+\Delta_g}}  r^{-\beta}  F(\tau)d\tau$$ 
is also bounded. Define the operator 
$$B: L^2(\R;L^2(X))\to L^q(\R;L^r(X)), \quad B F=\int_{\tau\in\R} \frac{e^{i(t-\tau)\sqrt{1+\Delta_g}}}{\sqrt{1+\Delta_g}}(1-\varphi_0)(\sqrt{\Delta_g}) r^{-\beta}F(\tau)d\tau.$$
Hence by the Strichartz estimate \eqref{Str-L0-est}   with $s=\frac32-\beta$, one has
\begin{equation}\label{BF}
\begin{split}
&\|B F\|_{L^q(\R;L^r(X))}\\&=\big\| e^{i t\sqrt{1+\Delta_g}}\int_{\tau\in\R}\frac{e^{-i\tau\sqrt{1+\Delta_g}}}{\sqrt{1+\Delta_g}} (1-\varphi_0)(\sqrt{\Delta_g})r^{-\beta} F(\tau)d\tau\big\|_{L^q(\R;L^r(X))}\\
&\lesssim \big\|(1+\Delta_g)^{\frac12(\frac32-\beta)} \int_{\tau\in\R}\frac{e^{-i\tau\sqrt{1+\Delta_g}}}{\sqrt{1+\Delta_g}}(1-\varphi_0)(\sqrt{\Delta_g}) r^{-\beta} F(\tau)d\tau\big\|_{L^2(X)}\\&=\|T^*F\|_{L^2}\lesssim \|F\|_{L^2(\R;L^2(X))}.
\end{split}
\end{equation}
Now we are ready to prove inquality \eqref{est:h-h-inh}. As
$$\sin\big((t-\tau)\sqrt{1+\Delta_g}\big)=\frac{1}{2i}\big(e^{i(t-\tau)\sqrt{1+\Delta_g}}-e^{-i(t-\tau)\sqrt{1+\Delta_g}}\big),$$
 by \eqref{BF}, we have
\begin{equation*}
\begin{split}
&\Big\|\int_\R\frac{\sin{(t-\tau)\sqrt{1+\Delta_g}}}
{\sqrt{1+\Delta_g}}(1-\varphi_0)(\sqrt{\Delta_g}) (V(z)u_h(\tau,z))d\tau\Big\|_{L^q(\R;L^{r}(X))}\\&\lesssim \|B(r^{\beta}V(z)u_h(\tau,z))\|_{L^q(\R;L^r(X))}\lesssim \|r^{\beta-2}u_h(\tau,z))\|_{L^2(\R;L^2(X))}
\\&\lesssim \|u_{0,h}\|_{ H^{\frac32-\beta}(X)},
\end{split}
\end{equation*}
where we have used Corollary \ref{cor:loc} and Remark \ref{rem:loc-s} in the last inequality since $\beta=(1-\frac1{n-1+\theta})_+$ satisfies $1\leq 2-\beta<2+\nu_0$.
Since $q>2$ and  $s=\frac32-\beta$, by the Christ-Kiselev lemma \cite{CK}, we have \eqref{est:h-h-inh}.  \vspace{0.2cm}

We next prove \eqref{est:h-l-inh}, in a similar way. We define the operator 
$$T: L^2(X)\to L^2(\R;L^2(X)), \quad Tf= r^{-1}e^{it\sqrt{1+\Delta_g}}\varphi_0(\sqrt{\Delta_g}) f.$$ 
From the proof of Proposition \ref{prop:loc} again,  it follows that $T$ is a bounded operator.
By duality, its adjoint $T^*$  
$$T^*: L^2(\R;L^2(X))\to L^2, \quad T^* F=\int_{\tau\in\R}\varphi_0(\sqrt{\Delta_g}) e^{-i\tau\sqrt{1+\Delta_g}}  r^{-1}  F(\tau)d\tau$$ 
is also bounded. Define the operator 
$$B: L^2(\R;L^2(X))\to L^q(\R;L^r(X)), \quad B F=\int_{\tau\in\R} \frac{e^{i(t-\tau)\sqrt{1+\Delta_g}}}{\sqrt{1+\Delta_g}}\varphi_0(\sqrt{\Delta_g}) r^{-1}F(\tau)d\tau.$$
Hence by the Strichartz estimate \eqref{Str-L0-est} with $s=(n+\theta)\Big(\frac12-\frac1r\Big)-\frac1q$, one has
\begin{equation}\label{BF'}
\begin{split}
&\|B F\|_{L^q(\R;L^r(X))}\\&=\big\| e^{i t\sqrt{1+\Delta_g}}\int_{\tau\in\R}\frac{e^{-i\tau\sqrt{1+\Delta_g}}}{\sqrt{1+\Delta_g}} \varphi_0(\sqrt{\Delta_g})r^{-1} F(\tau)d\tau\big\|_{L^q(\R;L^r(X))}\\
&\lesssim \big\|(1+\Delta_g)^{\frac s2} \int_{\tau\in\R}\frac{e^{-i\tau\sqrt{1+\Delta_g}}}{\sqrt{1+\Delta_g}}\varphi_0(\sqrt{\Delta_g}) r^{-1} F(\tau)d\tau\big\|_{L^2(X)}\\&\lesssim\|T^*F\|_{L^2}\lesssim \|F\|_{L^2(\R;L^2(X))}.
\end{split}
\end{equation}
Now we estimate \eqref{est:h-l-inh}. 
By using \eqref{BF'}, Corollary \ref{cor:loc} and similar argument as above, we have 
\begin{equation*}
\begin{split}
&\Big\|\int_\R\frac{\sin{(t-\tau)\sqrt{1+\Delta_g}}}
{\sqrt{1+\Delta_g}}\varphi_0(\sqrt{\Delta_g}) (V(z)u_h(\tau,z))d\tau\Big\|_{L^q(\R;L^{r}(X))}\\&\lesssim \|B(r V(z)u_h(\tau,z))\|_{L^q(\R;L^r(X))}\lesssim \|r^{-1}u_h(\tau,z))\|_{L^2(\R;L^2(X))}
\\&\lesssim \|u_{0,h}\|_{ H^{\frac12}(X)}.
\end{split}
\end{equation*}
Since $1/2\leq s\leq 1$, we can replace the $H^{1/2}$-norm by $H^s$-norm. Due to $q>2$, by the Christ-Kiselev lemma \cite{CK} we obtain \eqref{est:h-l-inh}. 
\end{proof}
Therefore we have proved \eqref{est:h-inh}; hence, we obtain
\begin{equation*}
\begin{split}
\|u_h(t,z)\|_{L^q(\R;L^{r}(X))}\lesssim \|u_{0}\|_{ H^{s}(X)}.
\end{split}
\end{equation*}
Next we aim to prove the same inequality for $u_l$, that is
\begin{equation}\label{est:l-hom}
\begin{split}
\|u_l(t,z)\|_{L^q(\R;L^{r}(X))}\lesssim \|u_{0}\|_{ H^{s}(X)}.
\end{split}
\end{equation}
Notice that the above argument breaks down since $\pi_\nu(u_l)$ does not vanish when $\nu<k=1+\nu_0$: this fact will lead to a tighter restriction on $\beta$.
Nevertheless, if $\nu<k:=1+\nu_0$, we can follow the argument of \cite{PST} which treated the radial case. Since
\begin{equation*}
\begin{split}
u_l(t,z)&=\frac{e^{it\sqrt{1+\LL_V}}+e^{-it\sqrt{1+\LL_V}}}2 u_{0,l},\end{split}
\end{equation*}
we only consider the Strichartz estimate for $e^{it\sqrt{1+\LL_V}}u_{0,l}$. 
By using \eqref{funct}, we write
 \begin{equation*}
\begin{split} e^{it\sqrt{1+\LL_V}}u_{0,l}&=
\sum_{\nu\in A}\sum_{\ell=1}^{d(\nu)}\varphi_{\nu,\ell}(y)\int_0^\infty(r\rho)^{-\frac{n-2}2}J_{\nu}(r\rho)e^{it\sqrt{1+\rho^2}}\mathcal{H}_\nu(a_{\nu,\ell})\rho^{n-1}d\rho,\\
&=\sum_{\nu\in A}\sum_{\ell=1}^{d(\nu)}\varphi_{\nu,\ell}(y)\mathcal{H}_\nu[e^{it\sqrt{1+\rho^2}}\mathcal{H}_\nu(a_{\nu,\ell})](r),
\end{split}
\end{equation*}
where $\mathcal{H}_\nu$ is the Hankel transform defined in \eqref{hankel}.

By triangle inequality and the H\"ormander's $L^\infty$-estimate for eigenfunctions \cite{Hor} i.e. $\|\varphi_{\nu,\ell}\|_{L^\infty(Y)}\leq \nu^{\frac{n-1}2}$, one has
 \begin{equation}\label{est:l-hom'}
\begin{split} &\|e^{it\sqrt{1+\LL_V}}u_{0,l}\|_{L^q(\R;L^r(X))} \\&\leq C_{\nu_0}
\sum_{\nu\in A}\sum_{\ell=1}^{d(\nu)}\left\|\mathcal{H}_\nu[e^{it\sqrt{1+\rho^2}}\mathcal{H}_\nu(a_{\nu,\ell})](r)\right\|_{L^q(\R;L^r_{r^{n-1}dr})}.
\end{split}
\end{equation}
For our purpose, we need the properties on the Hankel transforms which are proved in \cite[Corollary 3.2, Theorem 3.8]{PST} .

\begin{lemma}\label{K0} Let $\mathcal{H}_\nu$ be the Hankel transform of order $\nu$ as defined in \eqref{hankel}, and let $\mathcal{K}^0_{\mu,\nu}:=\mathcal{H}_\mu\mathcal{H}_\nu$ where $\mu=(n-2)/2$. Then

$(\mathrm i)$ $\mathcal{H}_\mu\mathcal{H}_\mu=Id$, 

$(\mathrm {ii})$  the operator $\mathcal{K}^0_{\nu,\mu}$
is bounded on $L^p_{r^{n-1}dr}([0,\infty))$ if
$$\max\{((n-2)/2-\nu)/n,0\}<1/p<1,$$

$(\mathrm {iii})$  the operator $\mathcal{K}^0_{\mu,\nu}$ is continuous on $H^s$ provided
$$ -\min\{\mu,\nu,\mu-s\}< 1<2+\min\{\mu,\nu,\nu-s\}.$$

\end{lemma}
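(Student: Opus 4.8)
The statement is taken from \cite[Corollary 3.2 and Theorem 3.8]{PST}; here is the structure of the argument one would follow. The plan is to reduce all three assertions to explicit mapping properties of the single integral operator obtained by composing two Hankel transforms of different orders. First, for $(\mathrm i)$, one recalls that after conjugation by multiplication by $r^{(n-1)/2}$ the transform $\mathcal{H}_\mu$ of \eqref{hankel} becomes the classical Hankel transform of order $\mu=(n-2)/2$, which is an isometric involution on $L^2_{r^{n-1}dr}([0,\infty))$; equivalently, $\mathcal{H}_\mu$ realises the spectral resolution of the radial operator $A_\mu=-\partial_r^2-\frac{n-1}{r}\partial_r$ (the radial part of $\Delta_g$), so $\mathcal{H}_\mu\mathcal{H}_\mu=\mathrm{Id}$ is nothing but Hankel inversion, i.e.\ the completeness relation $\int_0^\infty J_\mu(r\rho)J_\mu(s\rho)\,\rho\,d\rho=\delta(r-s)/r$. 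The same computation shows more generally that $\mathcal{H}_\nu$ diagonalises the operator $A_\nu$ of \eqref{2.8}, whence $\mathcal{K}^0_{\mu,\nu}=\mathcal{H}_\mu\mathcal{H}_\nu$ is unitary on $L^2_{r^{n-1}dr}$, $(\mathcal{K}^0_{\mu,\nu})^{-1}=\mathcal{K}^0_{\nu,\mu}$, and it intertwines the two model operators:
\begin{equation*}
\mathcal{K}^0_{\mu,\nu}\,A_\nu=A_\mu\,\mathcal{K}^0_{\mu,\nu}.
\end{equation*}

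For $(\mathrm{ii})$ the plan is to write the Schwartz kernel of $\mathcal{K}^0_{\nu,\mu}=\mathcal{H}_\nu\mathcal{H}_\mu$ with respect to the measure $s^{n-1}\,ds$ as
\begin{equation*}
K(r,s)=(rs)^{-\frac{n-2}2}\int_0^\infty J_\nu(r\rho)J_\mu(s\rho)\,\rho\,d\rho,
\end{equation*}
and to evaluate the (conditionally convergent) Weber--Schafheitlin integral in closed form in terms of a Gauss hypergeometric function. The qualitative output, which is all that is needed, is: $K$ is homogeneous of degree $-n$, so $\mathcal{K}^0_{\nu,\mu}$ commutes with dilations on $L^p_{r^{n-1}dr}$; it is smooth off the diagonal, with at worst an integrable (logarithmic) singularity on $\{r=s\}$; and away from the diagonal $|K(r,s)|\lesssim\min\{(r/s)^{\nu},(s/r)^{\mu}\}\,\max(r,s)^{-n}$, so that in particular $|K(r,s)|\lesssim r^{\nu-\mu}s^{\mu-\nu-n}$ as $r\to0$. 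A Schur test with power weights $(r/s)^{\delta}$ then gives boundedness on $L^p_{r^{n-1}dr}$ precisely for those exponents for which the weighted kernel integrals converge, and bookkeeping the admissible $\delta$ yields exactly $\max\{((n-2)/2-\nu)/n,0\}<1/p<1$. (Alternatively one may exhibit $\mathcal{K}^0_{\nu,\mu}$ as a composition of a Hankel transform with an Erd\'elyi--Kober fractional integral via Sonine's formula, and quote the $L^p$-theory of such operators.)

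Finally, $(\mathrm{iii})$ is obtained by combining the intertwining relation with interpolation. From $\mathcal{K}^0_{\mu,\nu}A_\nu=A_\mu\mathcal{K}^0_{\mu,\nu}$ one gets, by functional calculus, $\mathcal{K}^0_{\mu,\nu}A_\nu^{s/2}=A_\mu^{s/2}\mathcal{K}^0_{\mu,\nu}$, so that the $H^s$ mapping property reduces to the $L^2$ statement once the homogeneous Sobolev norm built from $A_\mu$ is identified with the usual one and the norm built from $A_\nu$ is controlled by it; both identifications use weighted Hardy inequalities and force $s$ below the thresholds determined by $\mu$ and $\nu$. One then interpolates (Stein's complex interpolation in the parameter $s$) between this $L^2$ endpoint and the $L^p$ statement of $(\mathrm{ii})$, and the requirement that the interpolation endpoints stay within the range of $(\mathrm{ii})$ is precisely the condition $-\min\{\mu,\nu,\mu-s\}<1<2+\min\{\mu,\nu,\nu-s\}$. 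I expect the only delicate point to be the sharp bookkeeping of the diagonal behaviour of $K$ and of the admissible weight exponents, so that the index conditions in $(\mathrm{ii})$ and $(\mathrm{iii})$ come out exactly as stated; since this is carried out carefully in \cite{PST}, we simply invoke it.
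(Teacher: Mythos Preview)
Your proposal is correct and matches the paper's approach: the paper does not prove this lemma at all but simply cites \cite[Corollary 3.2, Theorem 3.8]{PST}, exactly as you do. Your additional sketch of the argument (Hankel inversion, Weber--Schafheitlin kernel evaluation plus Schur test, intertwining plus interpolation) goes beyond what the paper provides and is a reasonable outline of the proofs in \cite{PST}.
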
 
By using this lemma, the operator $\mathcal{K}^0_{\nu,\mu}$ is bounded in 
$L^r_{r^{n-1}dr}([0,\infty))$ under the assumption $\frac1r>\frac12-\frac{1+\nu_0}n$ since $\nu\geq \nu_0$. Therefore from \eqref{est:l-hom'}, we obtain
 \begin{equation}\label{est:l-hom''}
\begin{split} 
&\|e^{it\sqrt{1+\LL_V}}u_{0,l}\|_{L^q(\R;L^r(X))}\\&\leq C_{\nu_0}\sum_{\nu\in A}\sum_{\ell=1}^{d(\nu)}\left\|(\mathcal{H}_\nu\mathcal{H}_\mu)\mathcal{H}_\mu[e^{it\sqrt{1+\rho^2}}\mathcal{H}_\mu(\mathcal{H}_\mu\mathcal{H}_\nu)(a_{\nu,\ell})](r)\right\|_{L^q(\R;L^r_{r^{n-1}dr})}\\&\leq C_{\nu_0}\sum_{\nu\in A}\sum_{\ell=1}^{d(\nu)}\left\|\mathcal{H}_\mu[e^{it\sqrt{1+\rho^2}}\mathcal{H}_\mu \mathcal{K}^0_{\mu,\nu}(a_{\nu,\ell})](r)\right\|_{L^q(\R;L^r_{r^{n-1}dr})}.\end{split}
\end{equation}
On the other hand, the propagator 
\begin{equation*}
\mathcal{H}_\mu e^{it\sqrt{1+\rho^2}}\mathcal{H}_\mu
\end{equation*}
 is the same as the classical Klein-Gordon propagator in the radial case in which the Strichartz estimates hold. 
From \eqref{est:l-hom''} and using (iii) in Lemma \ref{K0} with $1/2\leq s\leq 1$, we thus get 
 \begin{equation*}
\begin{split} 
\|e^{it\sqrt{1+\LL_V}}u_{0,l}\|_{L^q(\R;L^r(X))}
&\leq C_{\nu_0}\sum_{\nu\in A}\sum_{\ell=1}^{d(\nu)}\left\|\mathcal{K}^0_{\mu,\nu}(a_{\nu,\ell})](r)\right\|_{H^s} 
\\&\leq C_{\nu_0}\sum_{\nu\in A}\sum_{\ell=1}^{d(\nu)}\left\|a_{\nu,\ell}(r)\right\|_{H^s}.\end{split}
\end{equation*}
In the second inequality, we use \cite[Theorem 3.8]{PST}. 
Recall that from \eqref{set1} we have $\nu=\sqrt{(n-2)^2/4+\lambda}$ where $\lambda$ is an eigenvalue of the operator $\Delta_h+V_0(y)$. It is known that $\lambda$ is in a discrete 
set and moreover $\lambda_0<\lambda_1<\cdots<\lambda_j<\cdots \to \infty$. 
Therefore we have $\nu_j\to\infty$ as $j\to\infty$. As a consequence, by the definition of $A=\{\nu\in\chi_\infty: \nu\leq 1+\nu_0\}$, there exists a constant $C_{\nu_0}$ depending on $\nu_0$ such that the cardinality of the set $A$ is $\sharp A\leq C_{\nu_0}$. Thus
 \begin{equation*}
\begin{split} 
\|e^{it\sqrt{1+\LL_V}}u_{0,l}\|_{L^q(\R;L^r(X))}\leq \tilde{C}_{\nu_0}\left(\sum_{\nu\in A}\sum_{\ell=1}^{d(\nu)}\left\|a_{\nu,\ell}(r)\right\|^2_{ H^s}\right)^{1/2}\leq \tilde{C}_{\nu_0}\|u_{0,l}\|_{ H^s}
\end{split}
\end{equation*}
which implies \eqref{est:l-hom}, and this completes the proof of \eqref{V-stri} at the point $C'$.\\

By relying on standard interpolation theory, we then obtain Strichartz estimates on the line $AC'$. 

Now we show the Strichartz estimates in the region $ACE$. For this purpose, we need a Sobolev inequality associated to the operator $\LL_V$. 

\begin{proposition}[Sobolev inequality for $\LL_V$]\label{P:sobolev}  Let $n\geq 3$ and $\nu_0$ be defined as above. Suppose $0\leq \sigma<1+\nu_0$ and $2\leq p,q<\infty$.  Then
\begin{equation}\label{est:sobolev}
\big\|f(z)\big\|_{L^q(X)}\lesssim \big\|(1+\LL_V)^\frac{\sigma}2f\big\|_{L^p(X)}
\end{equation}
holds for $\sigma=\tfrac{n}p-\tfrac{n}q$ and
\begin{equation}\label{est:sobolev_hyp}
\frac{n}{\min\{1+\frac n2+\nu_0-\sigma, n\}}<q<\frac{n}{\max\{\frac n2-1-\nu_0, 0\}}.\end{equation}
\end{proposition}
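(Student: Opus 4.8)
The plan is to reduce the Sobolev estimate on the cone to a one-dimensional (radial) Sobolev estimate on the half-line by separating variables via the Hankel transform, exactly as in the treatment of the low-frequency part of \eqref{V-stri}. Writing $f$ in the harmonic expansion $f(z)=\sum_{\nu\in\chi_\infty}\sum_{\ell=1}^{d(\nu)}a_{\nu,\ell}(r)\varphi_{\nu,\ell}(y)$ and recalling from \eqref{funct} that on each angular mode $\mathcal{H}^\nu$ the operator $(1+\LL_V)^{\sigma/2}$ acts as the Hankel multiplier $(1+\rho^2)^{\sigma/2}$ conjugated by $\mathcal{H}_\nu$, one uses the commutation identities of Lemma \ref{K0}: $\mathcal{H}_\mu\mathcal{H}_\mu=\mathrm{Id}$ with $\mu=(n-2)/2$, the boundedness of $\mathcal{K}^0_{\nu,\mu}=\mathcal{H}_\nu\mathcal{H}_\mu$ on $L^p_{r^{n-1}dr}$ when $\max\{((n-2)/2-\nu)/n,0\}<1/p<1$, and the continuity of $\mathcal{K}^0_{\mu,\nu}$ on the Sobolev scale under the stated index constraints. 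Since $\nu\geq\nu_0$ for every $\nu\in\chi_\infty$, the conditions of Lemma \ref{K0}(ii)--(iii) are uniform in $\nu$ once we are in the range dictated by $\nu_0$; this is where the restriction $\sigma<1+\nu_0$ and the bound \eqref{est:sobolev_hyp} on $q$ will enter.

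Concretely, I would first pass from $a_{\nu,\ell}$ to $\mathcal{K}^0_{\mu,\nu} a_{\nu,\ell}$, so that the function on each mode becomes $\mathcal{H}_\mu[\,\cdot\,]$ of a radial profile; then $(1+\LL_V)^{\sigma/2}$ becomes the \emph{standard} radial Klein–Gordon-type Bessel multiplier $\mathcal{H}_\mu(1+\rho^2)^{\sigma/2}\mathcal{H}_\mu$, which is the radial restriction of $(1+\Delta_{\mathbb{R}^n})^{\sigma/2}$. For this classical operator the Sobolev embedding $\|g\|_{L^q(\mathbb{R}^n)}\lesssim \|(1+\Delta)^{\sigma/2}g\|_{L^p(\mathbb{R}^n)}$ with $\sigma=n/p-n/q$ is standard and restricts to radial functions. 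Summing in $\ell$ using orthogonality of the $\varphi_{\nu,\ell}$ and the H\"ormander eigenfunction bound $\|\varphi_{\nu,\ell}\|_{L^\infty(Y)}\lesssim \nu^{(n-1)/2}$ as in \eqref{est:l-hom'}, and then summing in $\nu$ (the $\ell^2$ sum in $\nu$ being controlled by $\ell^2$ in $p,q$ via Minkowski once $p,q\geq 2$), one recovers \eqref{est:sobolev}. The cross-terms from applying $\mathcal{K}^0_{\nu,\mu}$ and $\mathcal{K}^0_{\mu,\nu}$ on the two sides (in $L^p$ and in $H^s$ respectively) are absorbed using Lemma \ref{K0}(ii) and (iii), and this is exactly the step that forces the lower and upper bounds on $q$ in \eqref{est:sobolev_hyp}: the condition $1/p>((n-2)/2-\nu)/n$ with $1/p=1/q+\sigma/n$ and $\nu\geq\nu_0$ becomes $q<n/\max\{n/2-1-\nu_0,0\}$, while the Sobolev-scale continuity of $\mathcal{K}^0_{\mu,\nu}$ gives the lower bound $q>n/\min\{1+n/2+\nu_0-\sigma,n\}$.

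The main obstacle I anticipate is the uniformity of all these bounds in the angular eigenvalue parameter $\nu$: Lemma \ref{K0} provides boundedness of $\mathcal{K}^0_{\mu,\nu}$ and $\mathcal{K}^0_{\nu,\mu}$, but the operator norms a priori depend on $\nu$, and a naive sum over the (infinitely many) $\nu\in\chi_\infty$ would diverge. The resolution is that the $\nu$-dependence of the Hankel kernel bounds improves as $\nu\to\infty$ (the relevant quantities are governed by $\max\{((n-2)/2-\nu)/n,0\}$, which is $0$ for all large $\nu$), so only finitely many "bad" modes with small $\nu$ need the delicate analysis — and for those the H\"ormander factor $\nu^{(n-1)/2}$ is harmless — while the remaining infinitely many modes are handled by a single uniform constant together with the Plancherel-type identity \eqref{norm1}. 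Making this uniformity precise, and checking that the two index conditions from Lemma \ref{K0}(ii)--(iii) combine exactly into \eqref{est:sobolev_hyp}, is the technical heart of the argument; everything else is a bookkeeping reduction to the Euclidean radial Sobolev inequality.
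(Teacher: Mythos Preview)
The paper does not actually supply a proof; it invokes \cite[Proposition~3.1, Corollary~3.1, Remark~3.4]{ZZ2}, whose argument is global: one establishes Gaussian-type upper bounds for the heat kernel of $e^{-t\LL_V}$ on the cone (with the correct behaviour near the tip, which is where $\nu_0$ enters), writes $(1+\LL_V)^{-\sigma/2}=c_\sigma\int_0^\infty t^{\sigma/2-1}e^{-t}e^{-t\LL_V}\,dt$, and reads off the $L^p\to L^q$ mapping property from the resulting pointwise kernel bounds. No separation of variables is used.

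Your mode-by-mode Hankel-transform strategy has a genuine gap at the summation step. Orthogonality of the $\varphi_{\nu,\ell}$ and the identity \eqref{norm1} hold only in $L^2(Y)$; for $q\neq 2$ the norm $\|f\|_{L^q(X)}$ simply does not decompose as an $\ell^2$-sum of the radial pieces $\|a_{\nu,\ell}\|_{L^q_{r^{n-1}dr}}$, and Minkowski's inequality for $q\geq 2$ goes the wrong way (it would take an $\ell^2$-sum of $L^q$ norms and bound the $L^q$ norm of the $\ell^2$-sum, not the converse you need). Using the triangle inequality instead, as in \eqref{est:l-hom'}, produces an $\ell^1$-sum together with the H\"ormander factor $\nu^{(n-1)/2}$, and since both this factor and the multiplicities $d(\nu)$ grow polynomially (Weyl's law on $Y$), the sum over $\nu\in\chi_\infty$ diverges regardless of how uniform the $\mathcal{K}^0_{\nu,\mu}$ bounds are. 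This is exactly why, in the paper's treatment of $u_l$, the same device is applied only to the \emph{finite} set $A=\{\nu\leq 1+\nu_0\}$. To salvage your route you would need a genuine $L^q$ square-function estimate for the spectral projections $\pi_\nu$ on $Y$, which is substantially harder than what Lemma~\ref{K0} provides and is not what the paper does.
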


\begin{proof} The proof follows from the argument of \cite[Proposition 3.1, Corollary 3.1, Remark 3.4]{ZZ2}. We omit the details here.

\end{proof}

For any point $(\frac1q,\frac1r)$ in the region $ACE$,  there exists a point $(\frac1q,\frac1{r_0})\in\Lambda_{s_0,\theta,\nu_0}$ on the line $AC'$.
Since $(q,r)\in\Lambda_{s,\theta,\nu_0}$, the condition $1/r>1/2-(1+\nu_0)/n$ guarantees \eqref{est:sobolev_hyp}. By using the Sobolev inequality in Proposition \ref{P:sobolev} ,
we obtain 
\begin{equation*}
\begin{split}
\|u(t,z)\|_{L^q(\R;L^{r}(X))}&\lesssim \|(1+\LL_V)^{\frac \sigma2} u(t,z)\|_{L^q(\R;L^{r_0}(X))}\\&\lesssim \|(1+\LL_V)^{\frac \sigma2}u_0\|_{ H^{s_0}(X)}+\|(1+\LL_V)^{\frac \sigma2}u_1\|_{ H^{s_0-1}(X)}
\\&\lesssim \|u_0\|_{ H^{s}(X)}+\|u_1\|_{ H^{s-1}(X)}, \quad s\geq s_0+\sigma, \end{split}
\end{equation*}
where $s=(n+\theta)(\frac12-\frac1r)-\frac1q$, $s_0=(n+\theta)(\frac12-\frac1{r_0})-\frac1q$ and $\sigma=n(\frac1 {r_0}-\frac1r)$. This concludes the proof in the region $ACE$.

 \vspace{0.2cm}

{\bf Case 2:} $\nu_0> \frac{1-\theta}{n-1+\theta}$ with $0\leq\theta\leq 1$. In this case, 
the set $\Lambda_{s,\theta,\nu_0}$ is the region $ABCE$ which  contains $ABF$ ($n\geq4; or\, n=3, 0<\theta\leq 1$) or $ABO$ ($n=3, \theta=0$) respectively. However the most relevant case, $q=2$, is excluded 
in \eqref{V-stri}. Since $q>2$, the argument of Case 1 proves the Strichartz estimates for all points of the region $ABCE$ (except the line $BC$).\vspace{0.2cm}

\begin{center}
 \begin{tikzpicture}[scale=1]
\draw[->] (0,0) -- (4,0) node[anchor=north] {$\frac{1}{q}$};
\draw[->] (0,0) -- (0,4)  node[anchor=east] {$\frac{1}{r}$};
\draw (0,0) node[anchor=north] {O}
(3,0) node[anchor=north] {$\frac12$};
\draw  (0, 3) node[anchor=east] {$\frac12$}
       (0, 0.5) node[anchor=east] {$\frac12-\frac{1+\nu_0}{n}$}
       (0, 1.2) node[anchor=east] {$\frac12-\frac{1}{n-1+\theta}$};

\draw[thick] (3,0) -- (3,1.2)  
              (3,1.2) -- (0,3);

\draw[red, dashed,thick] (0,1.2) -- (3,1.2);
 \draw[red, dashed,thick]         (3.0,0.5) -- (0,0.5); 
\filldraw[fill=gray!50](0,0.5)--(3,0.5)--(3,1.2)--(0,3); 
\draw (-0.1,3.2) node[anchor=west] {$A$};
\draw (2.9,1.2) node[anchor=west] {$B$};
\draw (3,0.5) node[anchor=west] {$C$};
\draw (2.9,0.15) node[anchor=west] {$D$};
\draw (-0.1,0.3) node[anchor=west] {$E$};
\draw (-0.1,1.0) node[anchor=west] {$F$};
\draw (1.65,2.88) node[anchor=west] {$\frac2q+\frac{n-1+\theta}{r}=\frac{n-1+\theta}{2}$};

\draw[<-] (1.6,2.1) -- (2,2.6) node[anchor=south]{$~$};

\path (2,-1) node(caption){Fig 3. $n\geq4, 0\leq\theta\leq 1; or \, n=3, 0<\theta\leq1$};  

\draw[->] (8,0) -- (12,0) node[anchor=north] {$\frac{1}{q}$};
\draw[->] (8,0) -- (8,4)  node[anchor=east] {$\frac{1}{r}$};
\path (9.6,-1) node(caption){Fig 4. $n=3,\theta=0$};  

\draw  (8.1, -0.1) node[anchor=east] {$O$};
\draw  (11, 0) node[anchor=north] {$\frac12$};
\draw  (8, 3) node[anchor=east] {$\frac12$};

\draw[thick] (8,3) -- (11,0);  
\draw (7.9,3.15) node[anchor=west] {$A$};
\draw (10.9,0.2) node[anchor=west] {$B$};
\draw (10,2.6) node[anchor=west] {$\frac{2}{q}+\frac{2+\theta}{r}=\frac{2+\theta}2$};

\draw (11,0) circle (0.06);

\filldraw[fill=gray!50](8,3)--(8,0)--(11,0); 

 \draw[red, dashed,thick]         (8.0,0) -- (11,0); 

\draw[<-] (9,2.1) -- (10,2.6) node[anchor=south]{$~$};

\path (6,-1.5) node(caption){Diagrammatic picture of the range of $(q,r)$, when $\nu_0\geq (1-\theta)/(n-1+\theta)$.};  

\end{tikzpicture}

\end{center}

We have thus completed the proof of the homogeneous estimates in \eqref{V-stri}, in all the cases. The corresponding inhomogeneous inequalities  follow from the Christ-Kiselev lemma \cite{CK} as in subsection \ref{sec:inh} as we are assuming $q>2$. This completes the proof of \eqref{V-stri}.\vspace{0.2cm}

\subsection {The Strichartz estimates in \eqref{V-stri'} } In this subsection, we focus on the Strichartz estimates on the line $BC$ in Figure 3 since the other 
Strichartz estimates are proved in the above subsection.\vspace{0.2cm}

By using the Sobolev inequality \eqref{est:sobolev}, the proof of Strichartz estimates \eqref{V-stri'} on the line $BC$ reduces to study the point $B$. We thus prove the following
\begin{proposition}\label{prop:end-stri} Let $n\geq3$ and let $\nu_0>\frac{1-\theta}{n-1+\theta}$ with $0\leq\theta\leq1, (n\geq4)$ or $0<\theta\leq1, (n=3)$. Suppose $u(t,z)$ solves \eqref{equ:KG} with $F=0$. Then there exists a constant $C$ such that
\begin{equation}\label{end-stri}
\begin{split}
\|(1+\Delta_g)^{-\frac{\alpha+\alpha_0}2} &u(t,z)\|_{L^2(\R;L^{\frac{2(n-1+\theta)}{n-3+\theta}}(X))}
\\&\leq C\left(\|u_0\|_{H^{\frac{n+1+\theta}{2(n-1+\theta)}-\alpha-\alpha_0}(X)}+\|u_1\|_{H^{\frac{n+1+\theta}{2(n-1+\theta)}-\alpha-\alpha_0-1}(X)}\right)
\end{split}
\end{equation}
where $\alpha, \alpha_0$ are as in \eqref{V-stri'}.
\end{proposition}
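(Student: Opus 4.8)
The plan is to reduce \eqref{end-stri} (equivalently \eqref{V-stri'} along the segment $BC$ of Figure~3) to the single point $B=\bigl(\tfrac12,\tfrac{n-3+\theta}{2(n-1+\theta)}\bigr)$, and there to run a perturbation argument built on the free Klein--Gordon estimate \eqref{stri}, the retarded double-endpoint inhomogeneous estimate of Proposition~\ref{prop:inh}, and the local-smoothing estimates of Proposition~\ref{prop:loc}. For the reduction: if $(2,r)\in\Lambda_{s,\theta,\nu_0}$ lies on $BC$ then $\tfrac1r=\tfrac{n-3+\theta}{2(n-1+\theta)}-\tfrac{\sigma(r)}n$ with $\sigma(r)\ge0$, and the Sobolev inequality for $\Delta_g$ (Proposition~\ref{P:sobolev} with $V=0$, whose constraint \eqref{est:sobolev_hyp} is exactly the membership $1/r>1/2-(1+\nu_0)/n$) gives $\|(1+\Delta_g)^{-\frac{\alpha+\alpha_0+\sigma(r)}2}u\|_{L^2_tL^r_z}\lesssim\|(1+\Delta_g)^{-\frac{\alpha+\alpha_0}2}u\|_{L^2_tL^{r_B}_z}$ with $r_B=\tfrac{2(n-1+\theta)}{n-3+\theta}$; so it suffices to prove Proposition~\ref{prop:end-stri}. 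At $B$, write Duhamel relative to the free operator: since $u$ solves $\partial_t^2u+(1+\Delta_g)u=-Vu$,
\begin{equation*}
u(t)=\cos(t\sqrt{1+\Delta_g})u_0+\frac{\sin(t\sqrt{1+\Delta_g})}{\sqrt{1+\Delta_g}}u_1-\int_0^t\frac{\sin((t-\tau)\sqrt{1+\Delta_g})}{\sqrt{1+\Delta_g}}V(z)u(\tau,z)\,d\tau=:u_{\mathrm{free}}(t)-\mathcal{R}(Vu)(t).
\end{equation*}
The free part is immediate: $(2,r_B)\in\Lambda_{s_B,\theta}$ with $s_B=\tfrac{n+1+\theta}{2(n-1+\theta)}=\tfrac12+\alpha$ (here the hypothesis $n\ge4$, or $n=3$ with $\theta>0$, is used to avoid the forbidden pair), so \eqref{Str-L0-est} at $B$, after commuting the harmless factor $(1+\Delta_g)^{-(\alpha+\alpha_0)/2}$ through $\cos(t\sqrt{1+\Delta_g})$ and $\sin(t\sqrt{1+\Delta_g})/\sqrt{1+\Delta_g}$ and using $L^{r_B,2}\hookrightarrow L^{r_B}$, yields $\|(1+\Delta_g)^{-(\alpha+\alpha_0)/2}u_{\mathrm{free}}\|_{L^2_tL^{r_B}_z}\lesssim\|u_0\|_{H^{s_B-\alpha-\alpha_0}}+\|u_1\|_{H^{s_B-\alpha-\alpha_0-1}}$, which is the right-hand side of \eqref{end-stri}.

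The heart of the matter is the Duhamel term $\mathcal{R}(Vu)$. Because $q=2$ rules out the Christ--Kiselev lemma, one cannot replace $\mathcal{R}$ by the non-retarded operator, so we invoke Proposition~\ref{prop:inh}: since $(1+\Delta_g)^{-c}$ commutes with $\mathcal{R}$, it reads, for any $0\le\ttheta\le1$, $\|(1+\Delta_g)^{-(\alpha+\talpha)/2}\mathcal{R}F\|_{L^2_tL^{r_B,2}_z}\lesssim\|F\|_{L^2_tL^{\tr(\ttheta)',2}_z}$, with $\tr(\ttheta)=\tfrac{2(n-1+\ttheta)}{n-3+\ttheta}$ and $\talpha=\tfrac1{n-1+\ttheta}$. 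We want to apply this with $F=Vu=r^{-2}V_0(y)u$ and control the source by local smoothing. The obstruction — precisely the ``commuting $\Delta_g$ with $\LL_V$'' difficulty flagged in the introduction — is that $r^{-2}$ cannot be passed through the propagator, and a single uniform splitting $r^{-2}=r^{-(2-\beta)}\cdot r^{-\beta}$ combined with Hölder in Lorentz spaces, $\|r^{-2}V_0u\|_{L^{\tr(\ttheta)',2}}\lesssim\|r^{-(2-\beta)}\|_{L^{n-1+\ttheta,\infty}(X)}\,\|r^{-\beta}u\|_{L^2}$, is possible only at the critical exponent $2-\beta=\tfrac n{n-1+\ttheta}$, which need not be compatible with the admissible ranges $\beta\in[1,1+\nu_0)$ of \eqref{l-local-s} and $\beta\in(1/2,1+\nu_0)$ of \eqref{h-local-s}.

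This is resolved by decomposing $u=u^l+u^h$ into the $\LL_V$-frequency pieces $u^l=\varphi_0(\sqrt{\LL_V})u$, $u^h=(1-\varphi_0)(\sqrt{\LL_V})u$ (each again a solution of the Klein--Gordon equation, so Proposition~\ref{prop:loc} applies to each) and treating them with different second indices. For $\mathcal{R}(Vu^h)$ take $\ttheta=0$: then $\tr(0)'=\tfrac{2(n-1)}{n+1}$, the critical weight is $r^{-(1+\alpha_0)}$ and $1+\alpha_0=\tfrac n{n-1}$, whence $\|Vu^h\|_{L^2_tL^{\tr(0)',2}_z}\lesssim\|r^{-(1-\alpha_0)}u^h\|_{L^2_tL^2_z}\lesssim\|u_0\|_{H^{1/2-\alpha_0}}+\|u_1\|_{H^{-1/2-\alpha_0}}$ by \eqref{h-local-s} with $\beta=1-\alpha_0\in(1/2,1+\nu_0)$; this is what produces the loss $\alpha+\alpha_0$. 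For $\mathcal{R}(Vu^l)$ take $\ttheta=1$: the loss is then only $\alpha+\tfrac1n<\alpha+\alpha_0$, $\tr(1)'=\tfrac{2n}{n+2}$, the critical weight is $r^{-1}$ and $1=\tfrac nn$; since $(1+\Delta_g)^{-(\alpha_0-1/n)/2}$ is bounded on $L^{r_B,2}$, it is enough to bound $\|(1+\Delta_g)^{-(\alpha+1/n)/2}\mathcal{R}(Vu^l)\|_{L^2_tL^{r_B,2}_z}\lesssim\|r^{-1}u^l\|_{L^2_tL^2_z}\lesssim\|u_0\|_{L^2}+\|u_1\|_{L^2}$ by \eqref{l-local-s} with $\beta=1$, and $\|u_0\|_{L^2}\le\|u_0\|_{H^{1/2-\alpha_0}}$ since $\tfrac12-\alpha_0\ge0$ for $n\ge3$. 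Adding these two contributions to the free part gives \eqref{end-stri}. (When $n=3$, $\ttheta=0$ is not allowed since $\tr(0)=\infty$; one uses instead an arbitrarily small $\ttheta>0$ for the $u^h$-piece, at the cost of an infinitesimal extra regularity absorbed by the same device.) The main obstacle throughout is this weight mismatch; the numerology closes — and pins down the loss to exactly $\alpha+\alpha_0$ — only because $1+\alpha_0=\tfrac n{n-1}$ and $1=\tfrac nn$ are the borderline exponents for $r^{-\gamma}\in L^{n-1,\infty}(X)$ and $r^{-\gamma}\in L^{n,\infty}(X)$.
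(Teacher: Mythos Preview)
Your proof is correct and follows essentially the same route as the paper: split $u=u^l+u^h$ via the $\LL_V$-frequency cutoff $\varphi_0(\sqrt{\LL_V})$, apply the retarded double-endpoint inhomogeneous estimate (Proposition~\ref{prop:inh}) with $\ttheta=1$ on the low piece (so that the Lorentz--H\"older splitting yields the weight $r^{-1}$ handled by \eqref{l-local-s}) and with $\ttheta=0$ on the high piece (so that the weight becomes $r^{-(n-2)/(n-1)}$ handled by \eqref{h-local-s}), which is exactly what fixes the loss at $\alpha+\alpha_0$. The only cosmetic difference is that the paper writes Duhamel separately for $u^l$ and $u^h$ while you write it once for $u$ and then split the source; your explicit caveat about $n=3$ (where $\ttheta=0$ and $\beta=\tfrac{n-2}{n-1}=\tfrac12$ are borderline) is a point the paper passes over silently.
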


\begin{proof} To prove \eqref{end-stri}, we split $u$ into a low and a high frequencies part. Define $u^l(t,z)=\varphi_0(\sqrt{\LL_V})u(t,z)$ and $u^h(t,z)=(1-\varphi_0)(\sqrt{\LL_V})u(t,z)$, thus $u=u^l+u^h$.\vspace{0.2cm}

{\bf Step 1:} We first consider $u^l$. Since $u^l$ solves the equation
\begin{equation*}
\partial_{t}^2u^l+\LL_V u^l+u^l=0, \quad u(0)=u^l_0,
~\partial_tu(0)=u^l_1,
\end{equation*}
we have by the Duhamel formula
\begin{equation}\label{l-duhamel}
\begin{split}
u^l(t,z)&=\frac{e^{it\sqrt{1+\LL_V}}+e^{-it\sqrt{1+\LL_V}}}2 u^l_0+\frac{e^{it\sqrt{1+\LL_V}}-e^{-it\sqrt{1+\LL_V}}}{2i\sqrt{1+\LL_V}}u^l_1\\&=\frac{e^{it\sqrt{1+\Delta_g}}+e^{-it\sqrt{1+\Delta_g}}}2 u^l_0+\frac{e^{it\sqrt{1+\Delta_g}}-e^{-it\sqrt{1+\Delta_g}}}{2i\sqrt{1+\Delta_g}}u^l_1\\&\quad+\int_0^t\frac{\sin{(t-\tau)\sqrt{1+\Delta_g}}}
{\sqrt{1+\Delta_g}}(V(z)u^l(\tau,z))d\tau.
\end{split}
\end{equation}
Now we aim to prove
\begin{equation}\label{aim:end-l-stri}
\begin{split}
\|(1+\Delta_g)^{-\frac{\alpha+\alpha_0}2} &u^l(t,z)\|_{L^2(\R;L^{\frac{2(n-1+\theta)}{n-3+\theta}}(X))}
\\&\leq C\left(\|u^l_0\|_{H^{\frac{n+1+\theta}{2(n-1+\theta)}-\alpha-\alpha_0}(X)}+\|u^l_1\|_{H^{\frac{n+1+\theta}{2(n-1+\theta)}-\alpha-\alpha_0-1}(X)}\right).
\end{split}
\end{equation}
We need the following
\begin{lemma}\label{lem:end-l-stri} Let $u^l(t,z)=\varphi_0(\sqrt{\LL_V})u(t,z)$, then
\begin{equation}\label{end-l-stri}
\begin{split}
\Big\|(1+\Delta_g)^{-\frac{\alpha+\talpha}2}\int_0^t\frac{\sin{(t-\tau)\sqrt{1+\Delta_g}}}
{\sqrt{1+\Delta_g}}&(V(z)u^l(\tau,z))d\tau\Big\|_{L^2(\R;L^{\frac{2(n-1+\theta)}{n-3+\theta}}(X))}\\&\lesssim \|u^l_0\|_{ H^{s-\alpha-\talpha}(X)}+\|u^l_1\|_{ H^{s-\alpha-\talpha-1}(X)},
\end{split}
\end{equation}
where $s=\frac{n+1+\theta}{2(n-1+\theta)}$ and $\alpha, \talpha$ in \eqref{alpha} with $\theta$ as in Proposition \ref{prop:end-stri} and $\ttheta=1$.
\end{lemma}

We postpone the proof of this lemma to the end of this step.  Once \eqref{end-l-stri} is proved, we have 
\begin{equation}\label{end-stri'}
\begin{split}
\|(1+\Delta_g)^{-\frac{\alpha+\talpha}2}u^l(t,z)\|_{L^2(\R;L^{\frac{2(n-1+\theta)}{n-3+\theta}}(X))}\lesssim \|u^l_0\|_{ H^{s-\alpha-\talpha}(X)}+\|u^l_1\|_{ H^{s-\alpha-\talpha-1}(X)}.
\end{split}
\end{equation}
Indeed, by using \eqref{l-duhamel} and Proposition \ref{Str-L0} with $s=\frac{n+1+\theta}{2(n-1+\theta)}$ and $0\leq \theta\leq 1$, we get
\begin{equation*}
\begin{split}
&\|(1+\Delta_g)^{-\frac{\alpha+\talpha}2}u^l(t,z)\|_{L^2(\R;L^{\frac{2(n-1+\theta)}{n-3+\theta}}(X))}\lesssim \|u^l_0\|_{ H^{s-\alpha-\talpha}(X)}+\|u^l_1\|_{ H^{s-\alpha-\talpha-1}(X)}\\&+\Big\|(1+\Delta_g)^{-\frac{\alpha+\talpha}2}\int_0^t\frac{\sin{(t-\tau)\sqrt{1+\Delta_g}}}
{\sqrt{1+\Delta_g}}(V(z)u^l(\tau,z))d\tau\Big\|_{L^2(\R;L^{\frac{2(n-1+\theta)}{n-3+\theta}}(X))}.
\end{split}
\end{equation*}
This together with \eqref{end-l-stri} gives \eqref{end-stri'}. Since $\talpha=\frac1n<\alpha_0=\frac1{n-1}$, we have 
\begin{equation*}
\begin{split}
\|(1+\Delta_g)^{-\frac{\alpha+\alpha_0}2}u^l(t,z)\|_{L^2(\R;L^{\frac{2(n-1+\theta)}{n-3+\theta}}(X))}\lesssim \|(1+\Delta_g)^{-\frac{\alpha+\talpha}2}u^l(t,z)\|_{L^2(\R;L^{\frac{2(n-1+\theta)}{n-3+\theta}}(X))}.
\end{split}
\end{equation*}
Since the initial data is restricted to low frequencies, \eqref{end-stri'} implies \eqref{aim:end-l-stri}.

\begin{proof}[The proof of Lemma \ref{lem:end-l-stri}] By using \eqref{inh} with $\ttheta=1$, we obtain

\begin{equation*}
\begin{split}
&\Big\|(1+\Delta_g)^{-\frac{\alpha+\talpha}2}\int_0^t\frac{\sin{(t-\tau)\sqrt{1+\Delta_g}}}
{\sqrt{1+\Delta_g}}(V(z) u^l(\tau,z))d\tau\Big\|_{L^2(\R;L^{\frac{2(n-1+\theta)}{n-3+\theta}}(X))}\\
&\lesssim \|V(z) u^l(\tau,z)\|_{L^{2}_t{L}^{\frac{2n}{n+2},2}_z}\lesssim \|r^{-1} u^l(\tau,z)\|_{L^{2}_t{L}^{2}_z}.
\end{split}
\end{equation*}
By using \eqref{l-local-s} of Proposition \ref{prop:loc} with $\beta=1$ that, we prove
\begin{equation*}
\begin{split}
\|r^{-1} \varphi_0(\sqrt{\LL_V})u(t,z)\|_{L^{2}_t{L}^{2}_z}
\lesssim \|\varphi_0(\sqrt{\LL_V}) u_0\|_{ L^{2}(X)}+\|\varphi_0(\sqrt{\LL_V}) u_1\|_{ L^{2}(X)}.
\end{split}
\end{equation*}
Hence this shows \eqref{end-l-stri}. 
\end{proof}

{\bf Step 2:} We now consider $u^h$, that we deal with in a very similar way. By the Duhamel formula we can write
\begin{equation}\label{h-duhamel}
\begin{split}
u^h(t,z)&=\frac{e^{it\sqrt{1+\LL_V}}+e^{-it\sqrt{1+\LL_V}}}2 u^h_0+\frac{e^{it\sqrt{1+\LL_V}}-e^{-it\sqrt{1+\LL_V}}}{2i\sqrt{1+\LL_V}}u^h_1\\&=\frac{e^{it\sqrt{1+\Delta_g}}+e^{-it\sqrt{1+\Delta_g}}}2 u^h_0+\frac{e^{it\sqrt{1+\Delta_g}}-e^{-it\sqrt{1+\Delta_g}}}{2i\sqrt{1+\Delta_g}}u^h_1\\&\quad+\int_0^t\frac{\sin{(t-\tau)\sqrt{1+\Delta_g}}}
{\sqrt{1+\Delta_g}}(V(z)u^h(\tau,z))d\tau.
\end{split}
\end{equation}
Now we aim to prove
\begin{equation}\label{aim:end-h-stri}
\begin{split}
\|(1+\Delta_g)^{-\frac{\alpha+\alpha_0}2} &u^h(t,z)\|_{L^2(\R;L^{\frac{2(n-1+\theta)}{n-3+\theta}}(X))}
\\&\leq C\left(\|u^h_0\|_{H^{\frac{n+1+\theta}{2(n-1+\theta)}-\alpha-\alpha_0}(X)}+\|u^h_1\|_{H^{\frac{n+1+\theta}{2(n-1+\theta)}-\alpha-\alpha_0-1}(X)}\right).
\end{split}
\end{equation}
We thus need to prove the following
\begin{lemma}\label{lem:end-h-stri} Let $u^h(t,z)=(1-\varphi_0)(\sqrt{\LL_V})u(t,z)$, then
\begin{equation}\label{end-h-stri}
\begin{split}
\Big\|(1+\Delta_g)^{-\frac{\alpha+\talpha}2}\int_0^t\frac{\sin{(t-\tau)\sqrt{1+\Delta_g}}}
{\sqrt{1+\Delta_g}}&(V(z)u^h(\tau,z))d\tau\Big\|_{L^2(\R;L^{\frac{2(n-1+\theta)}{n-3+\theta}}(X))}\\&\lesssim \|u^h_0\|_{ H^{s-\alpha-\talpha}(X)}+\|u^h_1\|_{ H^{s-\alpha-\talpha-1}(X)},
\end{split}
\end{equation}
where $s=\frac{n+1+\theta}{2(n-1+\theta)}$ and $\alpha, \talpha$ in \eqref{alpha} where  $\theta$ is in Proposition \ref{prop:end-stri}  and $\ttheta=0$.
\end{lemma}

Once \eqref{end-h-stri} is proved, we have 
\begin{equation}\label{end-h-stri'}
\begin{split}
\|(1+\Delta_g)^{-\frac{\alpha+\talpha}2}u^h(t,z)\|_{L^2(\R;L^{\frac{2(n-1+\theta)}{n-3+\theta}}(X))}\lesssim \|u^h_0\|_{ H^{s-\alpha-\talpha}(X)}+\|u^h_1\|_{ H^{s-\alpha-\talpha-1}(X)}.
\end{split}
\end{equation}
Indeed, by using \eqref{h-duhamel} and Proposition \ref{Str-L0} with $s=\frac{n+1+\theta}{2(n-1+\theta)}$ and $0\leq \theta\leq 1$, we show that
\begin{equation*}
\begin{split}
&\|(1+\Delta_g)^{-\frac{\alpha+\talpha}2}u^h(t,z)\|_{L^2(\R;L^{\frac{2(n-1+\theta)}{n-3+\theta}}(X))}\lesssim \|u^h_0\|_{ H^{s-\alpha-\talpha}(X)}+\|u^h_1\|_{ H^{s-\alpha-\talpha-1}(X)}\\&+\Big\|(1+\Delta_g)^{-\frac{\alpha+\talpha}2}\int_0^t\frac{\sin{(t-\tau)\sqrt{1+\Delta_g}}}
{\sqrt{1+\Delta_g}}(V(z)u^h(\tau,z))d\tau\Big\|_{L^2(\R;L^{\frac{2(n-1+\theta)}{n-3+\theta}}(X))}.
\end{split}
\end{equation*}
This together with \eqref{end-h-stri} gives \eqref{end-h-stri'}. Note that $\alpha_0=\talpha(0)=\frac1{n-1}$,  thus \eqref{end-h-stri'} shows \eqref{aim:end-h-stri}.

\begin{proof}[The proof of Lemma \ref{lem:end-h-stri}] By using \eqref{inh} with $\ttheta=0$, we obtain

\begin{equation*}
\begin{split}
&\Big\|(1+\Delta_g)^{-\frac{\alpha+\talpha}2}\int_0^t\frac{\sin{(t-\tau)\sqrt{1+\Delta_g}}}
{\sqrt{1+\Delta_g}}(V(z) u^h(\tau,z))d\tau\Big\|_{L^2(\R;L^{\frac{2(n-1+\theta)}{n-3+\theta}}(X))}\\
&\lesssim \|V(z) u^h(\tau,z)\|_{L^{2}_t{L}^{\frac{2(n-1)}{n+1},2}_z}\lesssim \|r^{-\frac{n-2}{n-1}}u(\tau,z)\|_{L^{2}_t{L}^{2}_z},
\end{split}
\end{equation*}
By using \eqref{h-local-s} of Proposition \ref{prop:loc} with $\beta=(n-2)/(n-1)$ that, we prove
\begin{equation*}
\begin{split}
\|r^{-\beta} (1-\varphi_0)(\sqrt{\LL_V})u(t,z)\|_{L^{2}_t{L}^{2}_z}
\lesssim \| u^h_0\|_{ H^{\beta-\frac12}(X)}+\|u^h_1\|_{ L^{\beta-\frac32}(X)}.
\end{split}
\end{equation*}
Recall $s=\frac{n+1+\theta}{2(n-1+\theta)}$, thus $\beta-\frac12= s-\alpha-\talpha$, hence this shows \eqref{end-h-stri}. 
\end{proof}
Collecting \eqref{aim:end-l-stri} and \eqref{aim:end-h-stri}, we complete the proof of \eqref{end-stri}, and hence we obtain \eqref{V-stri'}.
\end{proof}

\subsection{The necessity of the assumption \eqref{Ls}  }
We conclude with the following result, to claim the restriction \eqref{Ls} is necessary for Theorem \ref{thm:Strichartz}.

\begin{proposition}\label{count}If $(q,r)\in \Lambda_{s,\theta}$ but  $(q,r)\notin \{(q,r):\frac1 r>\frac12-\frac{1+\nu_0}n\}$ the Strichartz estimates might fail.

\end{proposition}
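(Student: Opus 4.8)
The plan is to construct an explicit counterexample living in the sector $\mathcal H^{\nu_0}$ attached to the smallest eigenvalue $\nu_0^2$ of $\Delta_h+V_0(y)+(n-2)^2/4$. The underlying mechanism is the standard one for attractive inverse-square potentials: on $\mathcal H^{\nu_0}$ the operator $\LL_V$ acts as $A_{\nu_0}$, which the Hankel transform $\mathcal H_{\nu_0}$ conjugates to multiplication by $\rho^2$; hence data in this sector that are frequency-localised near $\rho\sim1$ produce a solution whose profile near the cone tip behaves like $r^{\nu_0-(n-2)/2}$, and such a profile fails to lie in $L^r_z$ exactly when $\tfrac1r\le\tfrac12-\tfrac{1+\nu_0}n$. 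I would first reduce to the regime $0<\nu_0<(n-2)/2$, which is where the obstruction is genuine: if $\nu_0\ge(n-2)/2$, in particular if $V\equiv0$ (cf. Remark~\ref{rem:loc-s}(i)), then $\tfrac1r>\tfrac12-\tfrac{1+\nu_0}n$ already holds for all $r\in[2,\infty)$, matching part~(i) of Theorem~\ref{thm:Strichartz}. Such cones exist, e.g. $Y=\mathbb S^{n-1}$ with $V_0$ a negative constant $-c$, $0<c<(n-2)^2/4$, so that $\nu_0=\sqrt{(n-2)^2/4-c}\in(0,(n-2)/2)$ while $\Delta_h+V_0+(n-2)^2/4$ remains strictly positive.

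Concretely, fix $\hat f\in C_c^\infty((1/2,2))$ with $\hat f\ge0$, $\hat f\not\equiv0$, set $f=\mathcal H_{\nu_0}\hat f$, pick a unit eigenfunction $\varphi_{\nu_0,1}$ on $Y$, and take $u_0(z)=f(r)\varphi_{\nu_0,1}(y)$, $u_1\equiv0$. Using that $\LL_V$ restricts to $A_{\nu_0}$ on $\mathcal H^{\nu_0}$ together with the Hankel--Plancherel identity, one checks $\|(1+\LL_V)^{s/2}u_0\|_{L^2(X)}=\|\varphi_{\nu_0,1}\|_{L^2(Y)}\,\|(1+\rho^2)^{s/2}\hat f\|_{L^2(\rho^{n-1}d\rho)}<\infty$ for every $s$, so $u_0\in H^s(X)$. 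By the functional-calculus formula \eqref{funct}, the corresponding solution is $u(t,z)=\cos(t\sqrt{1+\LL_V})u_0=\varphi_{\nu_0,1}(y)\,w(t,r)$ with
\[
w(t,r)=\int_0^\infty (r\rho)^{-\frac{n-2}2}J_{\nu_0}(r\rho)\cos\!\big(t\sqrt{1+\rho^2}\big)\hat f(\rho)\,\rho^{n-1}\,d\rho .
\]

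The heart of the argument is to extract the small-$r$ asymptotics of $w(t,r)$. Inserting $J_{\nu_0}(s)=\frac{(s/2)^{\nu_0}}{\Gamma(\nu_0+1)}(1+O(s^2))$ from \eqref{Bessel} and using $\operatorname{supp}\hat f\subset\{\rho\sim1\}$, one obtains $w(t,r)=c_{\nu_0}\,I(t)\,r^{\nu_0-\frac{n-2}2}+O\!\big(r^{\nu_0-\frac{n-2}2+2}\big)$ as $r\to0$, where $c_{\nu_0}>0$ and $I(t)=\int_0^\infty\rho^{\nu_0+n/2}\cos(t\sqrt{1+\rho^2})\hat f(\rho)\,d\rho$ satisfies $I(0)>0$; by continuity there are $t_0,r_0>0$ with $|w(t,r)|\ge c_0\,r^{\nu_0-\frac{n-2}2}$ for $|t|\le t_0$, $0<r\le r_0$. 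A Fubini computation then gives, for each such $t$ (here in the exponent the Lebesgue index is meant, the integration variable being radial),
\[
\|u(t,\cdot)\|_{L^r(X)}^r\ \gtrsim\ \int_0^{r_0} r^{\,r(\nu_0-\frac{n-2}2)+n-1}\,dr ,
\]
and this integral diverges precisely when $r(\nu_0-\tfrac{n-2}2)+n-1\le-1$, i.e. $r\ge\frac{n}{(n-2)/2-\nu_0}$, i.e. $\tfrac1r\le\tfrac12-\tfrac{1+\nu_0}n$ — exactly the complement of the range \eqref{Ls}. Hence $\|u\|_{L^q_t(\R;L^r_z(X))}=\infty$ while the right-hand side $\|u_0\|_{H^s(X)}+\|u_1\|_{H^{s-1}(X)}$ is finite, contradicting \eqref{V-stri}; the same datum contradicts \eqref{stri} and \eqref{V-stri'}.

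The main obstacle I anticipate is the uniform-in-$t$ lower bound $|w(t,r)|\gtrsim r^{\nu_0-(n-2)/2}$ near $r=0$: one must verify both that the remainder produced by the Bessel expansion is genuinely subordinate (immediate from absolute convergence of the series on the compact $\rho$-support of $\hat f$) and that the oscillatory factor $\cos(t\sqrt{1+\rho^2})$ does not cancel the leading term; the latter is precisely why $\hat f$ is chosen nonnegative and $t$ kept small, forcing $I(t)\ge\tfrac12 I(0)>0$. Everything else — the membership $u_0\in H^s(X)$, the separation-of-variables identity \eqref{funct}, and the passage to the radial integral for $\|u(t,\cdot)\|_{L^r}$ — is routine given the setup of Section~\ref{sec:6}.
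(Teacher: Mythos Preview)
Your proposal is correct and is precisely the standard counterexample construction the paper defers to via \cite[Proposition~6.2]{ZZ2}: data in the lowest angular sector $\mathcal H^{\nu_0}$, frequency-localised near $\rho\sim1$, inherit the $r^{\nu_0-(n-2)/2}$ profile at the tip that forces $\|u(t,\cdot)\|_{L^r}=\infty$ exactly when $\tfrac1r\le\tfrac12-\tfrac{1+\nu_0}n$. One minor correction: your closing remark that the datum also contradicts \eqref{stri} is off, since \eqref{stri} is the $V\equiv0$ case where (as you yourself note) $\nu_0\ge(n-2)/2$ and the restriction \eqref{Ls} is vacuous --- only \eqref{V-stri} and \eqref{V-stri'} are genuinely at stake.
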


The proof of this Proposition is identical to the one  in \cite[Proposition 6.2]{ZZ2}. We omit the details.

\begin{center}

\end{center}
\end{document}